\documentclass[11pt,a4paper,reqno]{amsart}
\usepackage[margin=25mm,headheight=14pt]{geometry}
\usepackage{amsmath,amssymb,mathtools,mathrsfs}
\usepackage{xcolor,graphicx}
\usepackage{placeins}
\usepackage{float}
\usepackage{hyperref}
\hypersetup{hidelinks,pdftitle={Spiral Harmonic Products},pdfauthor={Hai Zhang and Yongsheng Zhang}}
\allowdisplaybreaks[2]
\numberwithin{equation}{section}
\newtheorem{theorem}{Theorem}[section]
\newtheorem{lemma}[theorem]{Lemma}
\newtheorem{proposition}[theorem]{Proposition}
\newtheorem{corollary}[theorem]{Corollary}
\theoremstyle{remark}
\newtheorem{remark}[theorem]{Remark}
\newcommand{\R}{\mathbb R}
\newcommand{\C}{\mathbb C}
\newcommand{\Z}{\mathbb Z}
\newcommand{\Sph}{\mathbb S}
\begin{document}
\title{Spiral Harmonic Products}
\author{Hai Zhang}
\author{Yongsheng Zhang}
\subjclass[2020]{Primary 58E20; Secondary 53C43, 34C25, 53C60}
\keywords{harmonic maps, eigenmaps, spiral products, warped products,
inverse reconstruction,
Neumann systems, periodic solutions,
closed geodesics, degree, conic Finsler metrics}
\begin{abstract}
Motivated by the systematic study of spiral minimal products, we consider harmonic maps obtained by coupling two spherical eigenmaps through a profile curve in $\Sph^3$, with a doubly warped product metric on the source.
Keeping the source warps independent of the profile magnitudes leads us to two complementary problems: we reconstruct a compatible source metric from prescribed profile or source data, and we seek closed profiles after fixing the source metric.
For arbitrary prescribed warps, we homogenize the reduced Lagrangian and obtain a strongly convex conic Finsler metric whose oriented graph geodesics are exactly the harmonic profiles.
In the unit-speed inverse problem, the conserved phase momenta reduce reconstruction to a scalar magnitude equation.
This reduction yields global reconstructions from a prescribed magnitude, one source warp, or a monotone volume factor, as well as periodic families in which profiles closing on finite covers are dense.
For the fixed-metric problem, a change of time leads to an autonomous Neumann system.
We use this system to find continuous families of closed profiles and infinitely many closed graph geodesics for one fixed source metric.
For fixed even $p,q\geq 2$, every metric in this Neumann class on  $\Sph^1\times\Sph^p\times\Sph^q$ supports harmonic maps into  $\Sph^{p+q+1}$ of degree $4m$ for every $m\geq1$.
In the matched constant-warp case, these maps reduce to explicit eigenmaps.
\end{abstract}

\maketitle
\section{Introduction}
\subsection{Background and the two problems}
A map $f$ from a Riemannian manifold into the unit sphere is harmonic precisely when $\Delta f=-|df|^2f$.
Here $|df|^2$ is its energy density.
An \emph{eigenmap} is a spherical harmonic map with constant energy density.
Minimal isometric immersions are basic examples by Takahashi's theorem \cite{T66}.
For general background, see \cite{ES64}.
For the equivariant ODE viewpoint, see \cite{ER93}.

We place two eigenmaps in the two complex blocks of the target sphere and use a curve in $\Sph^3$ to control their relative sizes.
Two independent positive functions rescale the corresponding source factors.
This separation leads us to two problems.
In the first, we vary the source metric and reconstruct it from a prescribed profile, one warp, or the volume factor.
In the second, we fix the full source metric and seek closed profiles on it.

Smith's harmonic joins already separate spherical inputs through a one-dimensional profile \cite{Sm75}.
Baird and Ratto developed conservation laws for equivariant harmonic maps \cite{BR92}.
Ratto also constructed harmonic maps by deforming the domain metric, including warped-product settings \cite{Ra89a,Ra89b}.
Recent related constructions include deformation of cohomogeneity-one domain metrics and eigenmap-based framework \cite{Si26,BS26}.
Here our inverse problem instead reconstructs warped source geometry from prescribed profile data, while the fixed-metric problem asks a single source metric to support many closed profiles.

A second ingredient comes from the geometry of time-dependent Lagrangians.
By positive homogenization on the extended configuration space, we turn the reduced trajectories into geodesics of a conic Finsler metric \cite{Me16}.
This metric is auxiliary.
The source and target of the harmonic map remain Riemannian, so the construction differs from harmonic maps with a Finsler domain \cite{Mo01}.

The closest companion is the systematic study of spiral minimal products by Li and the second author \cite{LZ26}.
That work treats weighted profile geometry, integrability, and closed families.
In the induced-metric situation of Remark \ref{hlzrc}, the harmonic and minimal profile equations agree.
In general they do not: here the source warps are independent of the profile magnitudes, and the resulting equation is usually nonautonomous.

\subsection{The spiral harmonic product}
Fix eigenmaps
\begin{equation*}
      f_i:(M_i^{k_i},g_i)\longrightarrow\Sph^{2n_i+1}\subset\C^{n_i+1},
      \qquad \Delta f_i=-d_if_i,
       \qquad k_i,d_i>0.
\end{equation*}
Here $d_i=|df_i|^2$ is constant, and $\Sph^N$ is the unit round sphere.
For a profile curve $\gamma=(z_1,z_2):I\to\Sph^3$ and positive source warps $\ell_i$, set
\begin{equation*}
         G_\gamma(t,x,y)=(z_1(t)f_1(x),z_2(t)f_2(y)),
          \qquad
         g_\gamma=dt^2+\ell_1^2g_1+\ell_2^2g_2.
\end{equation*}
When $G_\gamma$ is harmonic, we call it a \emph{spiral harmonic product}.
In the principal region $z_1z_2\ne0$, express the profile as $\gamma=(ae^{is_1},be^{is_2})$ with $a,b>0$.
The profile magnitudes $a,b$ are target coefficients, whereas the source warps $\ell_1,\ell_2$ define the domain metric.
No relation $a=\ell_1$, $b=\ell_2$ is imposed.
The product is \emph{singly spiral} if one phase rotates, \emph{doubly spiral} if both do, and has \emph{constant magnitude} if $a,b$ are constant.
In the \emph{unit-speed class}, $|\gamma'|=1$ and $t$ is profile arc length.
At a coordinate axis we use the smooth components $z_i$ instead of magnitude--phase variables.

We encode the source warps by
$
          F=\ell_1^{k_1}\ell_2^{k_2},
       \,
          T=\frac{d_2}{\ell_2^2}-\frac{d_1}{\ell_1^2}.
$
We also set $\lambda_i=d_i/\ell_i^2$, so that $T=\lambda_2-\lambda_1$.
Conversely, in Lemma \ref{gsep} we show that $(F,T)$ determines a unique smooth positive warp pair.
For $0<a<1$, the level $|z_1|=a$, $|z_2|=b$ is a Hopf torus, while either axis is a coordinate circle.
The phases determine the motion on this level, so a magnitude limit alone does not determine the accumulation set.

\subsection{Main results}
We first identify the common graph geometry in Theorem A.
We then vary the source metric through inverse reconstruction and phase matching in Theorems B and C.
In Theorem D we keep the full source metric fixed and select closed Neumann orbits.

\medskip
\noindent\textbf{Theorem A (prescribed-warp graph geometry).}
\emph{Set $V(t,z)=\tfrac12(\lambda_1|z_1|^2+\lambda_2|z_2|^2)$.
For $(\xi,v)\in T_tI\times T_z\Sph^3$ with $\xi>0$, define
\begin{equation*}
      \mathcal K(t,z;\xi,v)
        =\frac{F(t)}{2\xi}\bigl(|v|^2+2V(t,z)\xi^2\bigr).
\end{equation*}
Then $\mathcal K$ is a strongly convex conic Finsler metric on $I\times\Sph^3$.
The product $G_\gamma$ is harmonic if and only if the oriented graph $t\mapsto(t,\gamma(t))$ is an unparametrized $\mathcal K$-geodesic (Theorem \ref{hkrop}).}

The cone $\xi>0$ selects positive time orientation.
The warps determine $\mathcal K$, while an initial position and direction determine an individual profile.

For the unit-speed inverse problem, interchange the factors if necessary and normalize the two conserved phase momenta by
$
       \bigl(Fa^2s_1',Fb^2s_2'\bigr)
          =\sigma\sqrt\varepsilon\,(1,c),
       \, \varepsilon>0,
       \, c\in\R,
       \, \sigma\in\{-1,1\}.
$
Thus harmonicity reduces to one scalar magnitude equation.

\medskip
\noindent\textbf{Theorem B (unit-speed reconstruction).}
\begin{enumerate}
\item \emph{If $a:I\to(0,1)$ is smooth and $R=1-a^2-(a')^2>0$, then it determines the other magnitude, both source warps, and the phases up to constant rotations.  With $Q=1+(c^2-1)a^2$, the reconstruction is governed by}
$
       F^2=\frac{\varepsilon Q}{a^2R},
       \,
       T=\frac1{a^2Q}-\frac{a+a''}{aR}.
$
\emph{Turns and constant intervals are allowed.}
\item \emph{If one source warp is prescribed on $\R$ and both phase momenta are nonzero, every admissible initial position and direction gives a unique global reconstruction.  The source is complete when the input factors are complete.}
\item \emph{If a prescribed positive volume factor is strictly monotone and has positive infimum, a global reconstruction exists exactly when}
$
       \sqrt\varepsilon(1+|c|)\le \inf_{\R}F.
$
\emph{Periodic volumes and finite or infinite volume-factor limits are treated in Section \ref{sevol}.}
\end{enumerate}

Theorem B allows us to reconstruct the source geometry from partial data.
For periodic data, however, scalar return is not enough: we must also close the two phase increments.

\medskip
\noindent\textbf{Theorem C (periodic-warp closure).}
\emph{Let one positive periodic source warp be fixed and assume that the input manifolds are closed.}
\begin{enumerate}
\item \emph{There are local families with periodic magnitudes and periodic reconstructed source metrics in which profiles closing on finite covers are dense.}
\item \emph{There is a closed doubly spiral sequence with uniformly positive source warps such that one target magnitude tends to zero and the least closing multiplier tends to infinity.}
\item \emph{Selected nondegenerate closed members persist, with the same phase return, under sufficiently small positive periodic perturbations of the prescribed warp.}
\end{enumerate}
\emph{These statements are proved in Section \ref{seper}.}

Here the second warp, and hence the source metric, still varies.
We change viewpoint in the next result: we fix the full source metric and find closed profiles on it.

\medskip
\noindent\textbf{Theorem D (fixed-metric closure and unbounded degree).}
\emph{Let $F>0$ be smooth and $P$-periodic, and set $I_F=\int_0^P F^{-1}\,dt$.}
\begin{enumerate}
\item \emph{For every $\kappa\in\R$, the data $F$ and $T=\kappa/F^2$ determine a fixed positive periodic source metric.  Its harmonic profiles are time changes of the trajectories of an autonomous Neumann system.}
\item \emph{For coprime nonzero integers $r_1,r_2$ with $|r_1|\ne|r_2|$, choose}
$
       \kappa=\frac{4\pi^2(r_1^2-r_2^2)}{I_F^2}.
$
\emph{The resulting single metric carries a one-parameter family of pairwise noncongruent closed profiles on distinct Hopf tori.  Their oriented graphs are pairwise disjoint closed $\mathcal K$-geodesics of equal length.}
\item \emph{For the real identity inputs on $\Sph^p$ and $\Sph^q$, with $p,q\ge2$ even, every fixed metric in this class supports harmonic maps}
$
       (\R/P\Z)\times\Sph^p\times\Sph^q
          \longrightarrow \Sph^{p+q+1}
$
\emph{of degree $4m$ for all $m\ge1$.  Their closed graph geodesics are not iterates of shorter ones, and their lengths are unbounded.}
\end{enumerate}

For some matched constant source warps in this family, the corresponding maps are eigenmaps.

We organize the paper as follows.
In Sections \ref{sepre} and \ref{segeo}, we derive the profile equation and describe its graph geometry.
We develop inverse reconstruction and periodic closure in Sections \ref{seglb} and \ref{seper}.
For the fixed-metric problem, Section \ref{seneu} focuses on the Neumann subclass.
Finally, in Section \ref{sevol} we study the prescribed volume factor.

\section{The common profile equation}\label{sepre}

Both constructions use the same one-dimensional reduction.
We derive it together with the conserved phase momenta and a formulation that remains regular at the coordinate axes.

Minimal isometric immersions have $d=k$ by Takahashi's theorem \cite{T66}.
Further examples include degree-$p$ spherical-harmonic eigenmaps, with $d=p(p+k-1)$, and circle windings $e^{i\theta}\mapsto e^{im\theta}$, with $d=m^2$ \cite{ER93}.

For constant positive $a,b,\ell_1,\ell_2$ with $a^2+b^2=1$, the map $(af_1,bf_2)$ from $(M_1\times M_2,\ell_1^2g_1+\ell_2^2g_2)$ is harmonic precisely when $d_1/\ell_1^2=d_2/\ell_2^2$.
Taking $\ell_1=a$ and $\ell_2=b$, we obtain $a^2=d_1/(d_1+d_2)$ and $b^2=d_2/(d_1+d_2)$.
When the inputs are minimal isometric immersions, so that $d_i=k_i$, these are the classical minimal-product coefficients.

\subsection{Component equations and phase momenta}

Let $g=g_\gamma$, let a prime denote differentiation in $t$, and set $q=|\gamma'|^2=(a')^2+(b')^2+a^2(s_1')^2+b^2(s_2')^2$.
Also set
\begin{equation}
     K=F'/F,
     \qquad E=q+a^2d_1/\ell_1^2+b^2d_2/\ell_2^2=|dG_\gamma|^2.
\end{equation}
The warped Laplacian is $\Delta_g=\partial_t^2+K\partial_t+\ell_1^{-2}\Delta_{g_1}+\ell_2^{-2}\Delta_{g_2}$.

In the principal region, we decompose $\Delta G_\gamma=((A_1+iA_2)ae^{is_1}f_1,(B_1+iB_2)be^{is_2}f_2)$ and obtain
\begin{equation*}
     \begin{aligned}
         A_1&=-\frac{d_1}{\ell_1^2}+\frac{a''}a-(s_1')^2+K\frac{a'}a,
     & A_2&=s_1''+\left(2\frac{a'}a+K\right)s_1',\\
     B_1&=-\frac{d_2}{\ell_2^2}+\frac{b''}b-(s_2')^2+K\frac{b'}b,
       & B_2&=s_2''+\left(2\frac{b'}b+K\right)s_2'.
     \end{aligned}
\end{equation*}
Comparing the real and imaginary parts, we find that harmonicity is equivalent to
\begin{equation}\label{phrm}
        A_1=B_1=-E,\qquad A_2=B_2=0.
\end{equation}
Integrating the imaginary equations, we obtain
\begin{equation}\label{ppha}
       (Fa^2s_1')'=0,\qquad (Fb^2s_2')'=0.
\end{equation}
Hence the two constants on each connected principal-region interval are
\begin{equation}\label{pfirst}
       C_1=Fa^2s_1',\qquad C_2=Fb^2s_2'.
\end{equation}
Their coordinate-free expressions $C_i=F\operatorname{Im}(\overline z_i z_i')$ remain defined at a coordinate-axis crossing.
Differentiating $a^2+b^2=1$ and using the equations also yields
\begin{equation}\label{psphe}
         a^2A_1+b^2B_1=-E,
\end{equation}
the real equations reduce to $A_1=B_1$.

Conversely, suppose that harmonic sphere maps $f_i$ produce a harmonic product with $a,b>0$.
By separating the $x$- and $y$-dependence, we find that each $|df_i|^2$ must be constant, so both inputs are eigenmaps.

\begin{remark}[Eigenmap criterion]
With the notation above, every harmonic product satisfies
\begin{equation*}
       \Delta G_\gamma=-E(t)G_\gamma,
       \qquad
       E(t)=|\gamma'|^2+\lambda_1|z_1|^2+\lambda_2|z_2|^2.
\end{equation*}
The product $G_\gamma$ is an eigenmap precisely when $E\equiv\mu$ for some constant $\mu$.
Equivalently, its profile components satisfy the weighted Sturm--Liouville equations
\begin{equation*}
          -\frac1F(Fz_i')'+\lambda_i z_i=\mu z_i,
          \qquad i=1,2.
\end{equation*}
\end{remark}

\subsection{Variational form of the profile equation}

The polar equations are useful for reconstruction but become singular when a component vanishes.
We therefore pass to a variational equation on $\Sph^3$, which remains smooth and leads directly to the graph geometry.

For fixed warps let $\Lambda=\operatorname{diag}(\lambda_1,\lambda_2)$.
On a compact time interval $J$ the reduced energy is
\begin{equation}\label{pener}
        \mathcal E_J(\gamma)
       =\frac12\int_JF\bigl(|\gamma'|^2+\langle\Lambda\gamma,\gamma\rangle\bigr)\,dt,
\end{equation}
where the inner product is real Euclidean.
When $M_1$ and $M_2$ have finite volume, this is the energy of $G_\gamma$ divided by $\operatorname{vol}(M_1)\operatorname{vol}(M_2)$.

By varying $\gamma$ in $\Sph^3$ with fixed endpoints, we obtain
\begin{equation}\label{peule}
     \gamma''+K\gamma'-\Lambda\gamma
          +\left(|\gamma'|^2+\langle\Lambda\gamma,\gamma\rangle\right)\gamma=0.
\end{equation}
We recover \eqref{phrm} from this equation, while \eqref{ppha} follows from its phase symmetries.
Unlike the polar system, it remains regular when a profile component vanishes.

\section{Graph geometry for prescribed source warps}\label{segeo}

For prescribed source warps, we interpret the reduced equation as a graph-geodesic equation.
Throughout this section $F$ and $T$ are arbitrary.
The autonomous fixed-metric sector of Section \ref{seneu} arises from the additional condition $F^2T=\kappa$ for a constant $\kappa$.
With $V$ as in Theorem A, the reduced profile Lagrangian is
\begin{equation*}
          L(t,z,u)=\frac{F(t)}2\bigl(|u|^2+2V(t,z)\bigr).
\end{equation*}
Following the positive homogenization of a time-dependent Lagrangian \cite[Section 4]{Me16}, we consider the cone $\mathscr C=\{(t,z;\xi,v)\in T(I\times\Sph^3):\xi>0\}$ and define
\begin{equation}\label{hkmet}
      \mathcal K(t,z;\xi,v)
      =\frac{F(t)}{2\xi}\bigl(|v|^2+2V(t,z)\xi^2\bigr).
\end{equation}

\begin{theorem}[Graph-geodesic formulation]\label{hkrop}
The function $\mathcal K$ is a smooth, positively $1$-homogeneous, strongly convex conic Finsler metric on $\mathscr C$.
The map $G_\gamma$ is harmonic if and only if the oriented graph $t\mapsto(t,\gamma(t))$ is an unparametrized $\mathcal K$-geodesic.
This formulation is regular at magnitude turns and vanishing profile components.
\end{theorem}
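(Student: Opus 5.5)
The proof has three parts: the pointwise properties of $\mathcal K$, a reparametrization-invariance argument that identifies $\mathcal K$-geodesics with extremals of $\mathcal E_J$, and the smoothness of everything on $\Sph^3$ itself.

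\textbf{Homogeneity, smoothness and convexity.} On $\mathscr C$ the function $\mathcal K=\frac{F}{2}\bigl(|v|^2/\xi+2V\xi\bigr)$ is smooth. Scaling $(\xi,v)\mapsto(\mu\xi,\mu v)$ with $\mu>0$ multiplies it by $\mu$, so it is positively $1$-homogeneous. It is positive because $F>0$, $\xi>0$ and $V>0$; the last holds since $\lambda_i>0$ and $|z_1|^2+|z_2|^2=1$.

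For strong convexity I will show that the fundamental tensor $g=\frac12\operatorname{Hess}_{(\xi,v)}(\mathcal K^2)$ is positive definite. I use the standard criterion for a positive $1$-homogeneous function: $g>0$ if and only if $\operatorname{Hess}\mathcal K$ is positive semidefinite with kernel exactly the radial line $\R(\xi,v)$. The term $\xi\mapsto FV\xi$ is linear, so only the perspective function $\phi(\xi,v)=\frac F2|v|^2/\xi$ contributes to the Hessian. Its Hessian is
\[
\frac F{\xi^3}\begin{pmatrix}|v|^2 & -\xi v^{T}\\ -\xi v & \xi^2 I\end{pmatrix}.
\]
The associated quadratic form is $\frac F{\xi^3}\,|\xi w-\eta v|^2$, evaluated on a vector $(\eta,w)$. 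This form is nonnegative and vanishes exactly when $w=(\eta/\xi)v$, that is, on the radial line. With $g=\mathcal K\operatorname{Hess}\mathcal K+d\mathcal K\otimes d\mathcal K$, positive definiteness follows provided $d\mathcal K(\xi,v)=\mathcal K(\xi,v)\neq0$ in the radial direction, which holds by Euler's identity since $\mathcal K>0$.

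\textbf{Geodesics versus harmonic profiles.} Because $\mathcal K$ is $1$-homogeneous, its length functional on oriented curves $c(\tau)=(t(\tau),z(\tau))$ with $\dot t>0$ is reparametrization invariant. Unparametrized geodesics are therefore critical points of length among such curves. Every such curve can be reparametrized by $t$ itself, and then
\[
\int\mathcal K(c,\dot c)\,d\tau=\int L(t,z,z')\,dt=\mathcal E_J(\gamma),
\]
where the last equality is \eqref{pener} with $\langle\Lambda\gamma,\gamma\rangle=2V$. Variations of the graph split into variations in the $z$-direction and tangential reparametrizations; the latter do not change length.

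The critical points of length among graphs over a fixed $J$ with fixed endpoints are thus exactly the critical points of $\mathcal E_J$. By the earlier derivation these are the solutions of \eqref{peule}, which is equivalent to \eqref{phrm}, hence to harmonicity of $G_\gamma$. Conversely, the Euler--Lagrange equations of $\mathcal K$ in the $z$-components, written in the parametrization $\tau=t$, are precisely \eqref{peule}. The $\xi$-component equation is the energy-balance identity, and it follows automatically from the $z$-equations by homogeneity (a Noether/Euler identity), as in \cite[Section 4]{Me16}.

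The main obstacle is making this last point precise. I will verify it directly: with $\tau=t$, the $t$-Euler--Lagrange equation of $\mathcal K$ reads $\frac{d}{dt}(\partial_\xi\mathcal K)=\partial_t\mathcal K$, where $\partial_\xi\mathcal K=-\frac F2|z'|^2+FV$ at $\xi=1$. I will check that this equation follows from \eqref{peule} by taking the inner product of \eqref{peule} with $\gamma'$ and multiplying by $F$. Here one must use the sphere constraint, which kills the normal term because $\langle\gamma,\gamma'\rangle=0$, and account for the explicit $t$-dependence of $F$ and $V$.

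\textbf{Regularity.} $\mathcal K$ is written in terms of $z\in\Sph^3\subset\C^2$ and the round metric, with no polar coordinates, so everything is smooth at points where $z_1=0$ or $z_2=0$ and at magnitude turns. Equation \eqref{peule} is likewise regular there. The equivalence with harmonicity extends across such points by continuity, because harmonicity of $G_\gamma$ is a closed condition and \eqref{peule} was derived globally on $\Sph^3$.
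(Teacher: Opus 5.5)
This is essentially the paper's proof. The gauge $\dot t=1$ turns $\mathcal K$-length into $\mathcal E_J$, the $\Sph^3$ Euler--Lagrange equation is \eqref{peule}, and $1$-homogeneity supplies the $t$-equation; the only real difference is that you prove strong convexity by the perspective-Hessian computation, whereas the paper writes $\mathcal K=\alpha^2/\beta$ with $\alpha^2=F(g_{\Sph^3}+2V\,dt^2)$, $\beta=2\,dt$ and invokes the Javaloyes--S\'anchez criterion for Kropina metrics. One tightening: your continuity argument at the axes does not cover a profile with a component vanishing on a whole interval, but no such argument is needed, since $\Delta_g(z_if_i)=(z_i''+Kz_i'-\lambda_iz_i)f_i$ shows that $\Delta_gG_\gamma=-|dG_\gamma|^2G_\gamma$ is exactly \eqref{peule} written in the components $z_1,z_2$.
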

\begin{proof}
Since $\lambda_i>0$, we have $V>0$.
Consequently,
\[
   \mathcal K=\frac{\alpha^2}{\beta},
   \qquad
   \alpha^2=F(g_{\Sph^3}+2Vdt^2),
   \qquad
   \beta=2dt,
\]
is a strongly convex conic Kropina metric on $\mathscr C$
by \cite[Corollary 4.12]{JS14}.
We parametrize every admissible curve by $t$.
In the gauge $\dot t=1$, its $\mathcal K$-length becomes \eqref{pener},
and the $\Sph^3$ Euler--Lagrange equation is \eqref{peule}.
The $1$-homogeneity of $\mathcal K$ supplies the remaining equation and proves the equivalence.
Equation \eqref{peule} also shows directly that the formulation remains regular at turns and coordinate axes.
\end{proof}

\begin{remark}[Noether integrals]
The componentwise action of $\mathbb T^2=U(1)^2$ on $\Sph^3\subset\C^2$ preserves $\mathcal K$.
For its infinitesimal generators $Z_1(z)=(iz_1,0)$ and $Z_2(z)=(0,iz_2)$, the Finsler Noether integrals along a graph geodesic are
\begin{equation*}
       \left.d_y\mathcal K(0,Z_i)\right|_{(t,\gamma;1,\gamma')}
          =F\langle\gamma',Z_i(\gamma)\rangle
          =F\operatorname{Im}(\overline z_i z_i')
          =C_i .
\end{equation*}
We thus recover exactly the constants in \eqref{pfirst}.
\end{remark}

\begin{remark}\label{hperi}
If both warps are $P$-periodic, then $\mathcal K$ descends to $(\mathbb R/P\mathbb Z)\times\Sph^3$, and its positively oriented closed geodesics with integer time winding $m\ge1$ correspond to $mP$-periodic harmonic profile curves.
\end{remark}

\subsection{Momentum constraints, axis crossing, and joins}

We now use the conserved momenta to constrain source collapse, axis crossing, and the gluing of profile pieces.

\begin{remark}[Momentum and source collapse]
The momenta $C_i=F\operatorname{Im}(\overline z_i z_i')$ satisfy
$$
     |C_1|+|C_2|\le F|\gamma'|,
         \qquad
          F|\gamma'|^2\ge (|C_1|+|C_2|)^2/F.
$$
Thus $C_1=C_2=0$ if $F\to0$ along an end while $|\gamma'|$ remains bounded.
The same conclusion holds on any interval $U\subset I$ for which $\int_Udt/F=\infty$ and $\int_UF|\gamma'|^2dt<\infty$.
Thus nonzero momentum obstructs source collapse.
\end{remark}

\begin{proposition}[Axis crossing and phase rigidity]
Let $\gamma=(z_1,z_2)$ solve \eqref{peule} on a connected interval on which $F>0$.
Then $C_i=F\operatorname{Im}(\overline z_i z_i')$, $i=1,2$, are constant.
If $z_i$ vanishes, then $C_i=0$.
Conversely, if $C_i=0$, then either $z_i\equiv0$ or a constant phase rotation makes $z_i$ real-valued.
If both coordinates have a zero, possibly at different times, a fixed $U(1)^2$ rotation moves $\gamma$ into the real great circle.
If $C_i\ne0$, the $i$th target block never vanishes.
Hence $G_\gamma$ omits every point of the coordinate subsphere $\{(w_1,w_2)\in\Sph^{2n_1+2n_2+3}:w_i=0\}$ and every such closed product is null-homotopic in the ambient sphere.
\end{proposition}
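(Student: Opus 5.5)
I will work directly with the regular Euler--Lagrange equation \eqref{peule}, written componentwise. Since $\Lambda$ is diagonal, each component satisfies a linear second-order ODE
\[
   z_i''+Kz_i'+\bigl(\mu(t)-\lambda_i\bigr)z_i=0,\qquad \mu=|\gamma'|^2+\langle\Lambda\gamma,\gamma\rangle,
\]
whose coefficients are real and smooth wherever $F>0$. I will take this linear structure as the backbone of every claim.

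\emph{Constancy.} I differentiate $F\operatorname{Im}(\overline z_iz_i')$. This gives $F'\operatorname{Im}(\overline z_iz_i')+F\operatorname{Im}(\overline z_iz_i'')$. Substituting $z_i''=-Kz_i'-(\mu-\lambda_i)z_i$, the term $\operatorname{Im}(\overline z_i z_i)$ vanishes because the coefficient is real. The term $-FK\operatorname{Im}(\overline z_iz_i')$ cancels $F'\operatorname{Im}(\overline z_iz_i')$, since $FK=F'$. Hence $C_i$ is constant, and if $z_i(t_0)=0$ then evaluating at $t_0$ gives $C_i=0$.

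\emph{The case $C_i=0$.} Suppose $C_i=0$ and $z_i\not\equiv0$. Write $z_i=x+iy$ with $x,y$ real. Then $x$ and $y$ solve the same real linear ODE, and $C_i=0$ says $F(xy'-x'y)=0$. So the Wronskian of $x,y$ vanishes identically. By uniqueness for linear ODEs, $(x,y)$ are linearly dependent, and $z_i=e^{i\theta}w$ with $w$ real. This step is the one place where I must be careful. A zero of $z_i$ would otherwise allow phase jumps, and it is precisely the linear dependence of the solutions $x,y$ that rules this out. Concretely, I pick $t_0$ with $z_i(t_0)\ne0$ and rotate so that $z_i(t_0)$ is real. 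Then $y(t_0)=0$. Vanishing Wronskian at $t_0$ with $x(t_0)\ne0$ forces $y'(t_0)=0$, so $y\equiv0$ by uniqueness. If both coordinates have zeros, both $C_i$ vanish, and I apply this to each coordinate. A component that is $\equiv0$ is impossible, since then $|z_j|\equiv1$ has no zero. So the two independent rotations in $U(1)^2$ make $\gamma$ real, hence $\gamma$ lies on the real great circle.

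\emph{The case $C_i\ne0$.} By the contrapositive of the zero statement, $z_i$ never vanishes. Then the $i$th block $z_i f_i$ is never zero, because $|f_i|=1$. So $G_\gamma$ omits the subsphere $\{w_i=0\}$. For a closed product, a continuous map into $\Sph^N$ minus a subsphere $\Sph^{N'}$ of the complementary block lands in a set that deformation retracts onto the complementary sphere $\{w_j=0\}$. This is the usual join retraction $(w_i,w_j)\mapsto w_i/|w_i|$ composed with straight-line homotopy. That sphere is then contractible inside $\Sph^N$, because a proper great subsphere lies in a hemisphere complement and so misses a point. Alternatively, the map omits a point of $\Sph^N$ and is therefore null-homotopic. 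I expect this last step to be routine.
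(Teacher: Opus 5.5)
Your proposal is correct and follows the paper's argument essentially step for step. The real-coefficient linear equation for each $z_i$ gives conservation of $C_i$. When $C_i=0$, a rotation makes the Cauchy data real at a point where $z_i\neq0$, so uniqueness keeps $z_i$ real. Null-homotopy follows because the image misses a point of $\Sph^{2n_1+2n_2+3}$. Your deformation-retraction route to that last claim is valid but unnecessary; the one-line alternative you give is exactly the paper's.
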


\begin{proof}
The $i$th component of \eqref{peule} has the form
\[
        z_i''+\frac{F'}Fz_i'+r_i(t)z_i=0,
        \qquad
        r_i(t)=|\gamma'|^2+\langle\Lambda\gamma,\gamma\rangle-\lambda_i(t),
\]
and $r_i$ is real.
Multiplying by $\overline z_i$ and taking imaginary parts, we obtain conservation of $C_i$.
A zero of $z_i$ forces $C_i=0$.
If $C_i=0$ and $z_i\not\equiv0$, a constant phase rotation makes the Cauchy data real at a point where $z_i\ne0$.
Uniqueness keeps $z_i$ real throughout.
If $C_i\ne0$, then $z_i(t)\ne0$ everywhere.
Choose any point $p$ of the omitted coordinate subsphere.
The image lies in $\Sph^{2n_1+2n_2+3}\setminus\{p\}$, which is contractible.
\end{proof}

The conserved momenta prevent a smooth join from changing which coordinates rotate.
In particular, a real Smith-type join cannot acquire a rotating phase at an axis crossing.
Joining constant and nonconstant magnitudes is possible in the variable-metric reconstruction (Remark \ref{massy}), but not by leaving a fixed Neumann relative equilibrium with the same Cauchy data.

\begin{remark}[Spiral minimal products]\label{hlzrc}
Suppose the inputs are horizontal isometric minimal immersions and the profile has unit speed.
Here horizontality means $\langle if_i,df_i(X)\rangle_{\mathbb R}=0$.
Impose the induced metric $\ell_1=a=\cos s$, $\ell_2=b=\sin s$.
Since $d_i=k_i$, we find $F=W(s)$ and $abT=(\log W)_s$, where $W(s)=\cos^{k_1}s\sin^{k_2}s$.
Under these hypotheses, equation \eqref{peule} agrees with the weighted-geodesic equation in the previous work of Li--Zhang on spiral minimal products \cite{LZ26}.
This identifies the shared profile geometry on $\Sph^3$, while the free data and geometric constraints of the harmonic and minimal constructions remain different.
Combined with unit speed, horizontality yields $G_\gamma^*g_{\rm round}=g_\gamma$.
Consequently, every harmonic product in this induced-metric sector is a minimal isometric immersion and an eigenmap with eigenvalue $1+k_1+k_2$.
\end{remark}

\section{Variable-metric inverse reconstruction}\label{seglb}

Within the unit-speed class, we consider two inverse problems.
We may prescribe an admissible magnitude and reconstruct both warps, or prescribe the first warp and let the evolution determine the second.
Here and below, \emph{entire} means defined for all $t\in\R$.

\subsection{Identities and separation of the warps}

Throughout this section the profile curve has unit speed, and $b=\sqrt{1-a^2}$ in the principal region.
For a harmonic product on a connected interval with $a,b>0$ and $s_1'\ne0$, we use the momentum normalization and the functions $Q,R$ introduced in Theorem B.
Thus $C_1=\sigma\sqrt\varepsilon$ and $C_2=cC_1$ in \eqref{pfirst}.
Combining unit speed with the inner product of \eqref{peule} and $\gamma'$, we obtain
\begin{equation}\label{gsca}
     R=\frac{\varepsilon Q}{F^2a^2}>0,
     \qquad
     (a')^2=1-a^2-\frac{\varepsilon Q}{F^2a^2},
\end{equation}
\begin{equation}\label{gcmp}
        \frac{F'}F+Taa'=0.
\end{equation}
For fixed $(c,\varepsilon,\sigma)$, we determine $F$ from \eqref{gsca} and $T$ from $A_1=-E$.
The following lemma then determines the warps, while \eqref{ppha} determines the phases.
These identities remain valid at turns.

\begin{lemma}[Separation of the warps]\label{gsep}
For every $F>0$ and $T\in\mathbb R$ there is a unique positive pair $(\ell_1,\ell_2)$ satisfying
$$
       \ell_1^{k_1}\ell_2^{k_2}=F,
          \qquad
       \frac{d_2}{\ell_2^2}-\frac{d_1}{\ell_1^2}=T.
$$
The pair depends smoothly on $(F,T)$.
\end{lemma}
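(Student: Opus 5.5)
The plan is to reduce the system to a single strictly monotone scalar equation. Work with the logarithmic variables $u_i=\log\ell_i$. The first equation becomes the linear constraint $k_1u_1+k_2u_2=\log F$. Solving for $u_2$ gives
$$u_2=\frac{\log F-k_1u_1}{k_2}.$$
Substituting into the second equation yields the scalar equation
$$\phi(u_1):=d_2\exp\Bigl(-\tfrac{2}{k_2}(\log F-k_1u_1)\Bigr)-d_1e^{-2u_1}=T.$$
Positivity of $(\ell_1,\ell_2)$ corresponds exactly to $u_1,u_2\in\mathbb R$, so existence and uniqueness of the pair reduce to showing that $\phi$ is a bijection $\mathbb R\to\mathbb R$.

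For this I will use that both $d_i,k_i>0$. The first term of $\phi$ is $d_2F^{-2/k_2}e^{2k_1u_1/k_2}$, which is strictly increasing in $u_1$. The second term is $-d_1e^{-2u_1}$, also strictly increasing. Hence
$$\phi'(u_1)=\frac{2k_1}{k_2}\,d_2F^{-2/k_2}e^{2k_1u_1/k_2}+2d_1e^{-2u_1}>0.$$
As $u_1\to+\infty$ the first term tends to $+\infty$ while the second tends to $0$, so $\phi\to+\infty$. As $u_1\to-\infty$ the first term tends to $0$ while the second tends to $-\infty$, so $\phi\to-\infty$. By continuity and strict monotonicity, $\phi$ is a bijection onto $\mathbb R$. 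So for each $T$ there is exactly one $u_1$, and then exactly one $u_2$. This gives existence and uniqueness of the positive pair.

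For smoothness I apply the implicit function theorem to
$$\Phi(F,T,u_1)=\phi_F(u_1)-T$$
on $(0,\infty)\times\mathbb R\times\mathbb R$. The map $\Phi$ is smooth in all variables, and $\partial_{u_1}\Phi=\phi'>0$ everywhere. Hence the unique solution $u_1(F,T)$ is smooth. Then $u_2$ is smooth by the explicit formula, and so are $\ell_i=e^{u_i}$. Composing with smooth functions $F(t),T(t)$ gives smooth warps in $t$, which is the form used in Theorem B.

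I expect no real obstacle. The only point needing care is checking the two limits of $\phi$, which use $k_1/k_2>0$ and $d_1,d_2>0$. This is where the hypotheses $k_i,d_i>0$ enter.
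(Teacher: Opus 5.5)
Your proof is correct and follows essentially the paper's route: the paper substitutes $x=\ell_1^2$ to get $d_2F^{-2/k_2}x^{k_1/k_2}-d_1/x=T$, which is strictly increasing from $-\infty$ to $+\infty$, and then applies the implicit-function theorem. Your logarithmic variable $u_1$ is the same reduction under $x=e^{2u_1}$, and you spell out the derivative and the two limits explicitly.
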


\begin{proof}
Set $x=\ell_1^2$.
The second equation becomes $d_2F^{-2/k_2}x^{k_1/k_2}-d_1/x=T$, whose left-hand side is strictly increasing from $-\infty$ to $+\infty$.
Existence and uniqueness follow, and the implicit-function theorem provides smooth dependence.
\end{proof}

\subsection{Prescribing the magnitude}

\begin{theorem}[Prescribed-magnitude reconstruction]\label{grec}
Fix $c\in\mathbb R$, $\varepsilon>0$, and $\sigma\in\{-1,1\}$.
Let $I$ be an interval, and let $a\in C^\infty(I,(0,1))$ be admissible in the sense of Theorem B.
With the functions $Q$ and $R$ fixed there, define
\begin{equation}\label{greg}
         \begin{gathered}
        b=\sqrt{1-a^2},
         \qquad F=\sqrt{\frac{\varepsilon Q}{a^2R}},
        \\
         T=\frac1{a^2Q}-\frac{a+a''}{aR}.
         \end{gathered}
\end{equation}
Let $(\ell_1,\ell_2)$ be the pair in Lemma \ref{gsep}, and choose phases with
\begin{equation}\label{gpha}
       s_1'=\frac{\sigma\sqrt\varepsilon}{Fa^2},
      \qquad s_2'=\frac{\sigma c\sqrt\varepsilon}{Fb^2}.
\end{equation}
Then $G=(ae^{is_1}f_1,be^{is_2}f_2)$ is harmonic for $g=dt^2+\ell_1^2g_1+\ell_2^2g_2$, and its profile curve has unit speed.
For fixed $(a,c,\varepsilon,\sigma)$ the reconstruction is unique up to additive constants in the phases.
Conversely, every unit-speed harmonic spiral product with $a,b>0$ and $s_1'\ne0$ arises this way on its connected interval.
\end{theorem}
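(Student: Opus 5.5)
The plan is to check the harmonicity equations \eqref{phrm} directly for the reconstructed data, and then to reverse the argument for the converse.

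\textbf{Unit speed.} With $b=\sqrt{1-a^2}$ we have $(b')^2=a^2(a')^2/b^2$, so $(a')^2+(b')^2=(a')^2/b^2$. The phases \eqref{gpha} give
$$a^2(s_1')^2+b^2(s_2')^2=\frac{\varepsilon}{F^2}\Bigl(\frac1{a^2}+\frac{c^2}{b^2}\Bigr)=\frac{\varepsilon Q}{F^2a^2b^2}.$$
The definition of $F$ gives $\varepsilon Q/(F^2a^2)=R=1-a^2-(a')^2$. Hence $q=\bigl((a')^2+R\bigr)/b^2=1$, so the profile has unit speed. This step is purely algebraic and also works at turns, since only $a'$ appears.

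\textbf{Harmonicity.} The imaginary equations $A_2=B_2=0$ are equivalent to \eqref{ppha}. These hold because $Fa^2s_1'$ and $Fb^2s_2'$ are the constants $\sigma\sqrt\varepsilon$ and $\sigma c\sqrt\varepsilon$ by construction. By \eqref{psphe}, and because $a^2+b^2=1$ is built in, the real equations reduce to the single equation $A_1=-E$.

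Here $E=1+a^2\lambda_1+b^2\lambda_2=1+\lambda_1+b^2T$. Substituting this into $A_1=-\lambda_1+a''/a-(s_1')^2+Ka'/a$, the $\lambda_1$ terms cancel, leaving
$$\frac{a''}a-\frac{\varepsilon}{F^2a^4}+K\frac{a'}a=-1-b^2T.$$
This is the point where the warps separate. Only $F$ and $T$ enter, and Lemma \ref{gsep} then produces $(\ell_1,\ell_2)$ smoothly. To compute $K=F'/F$, I differentiate $F^2a^2R=\varepsilon Q$:
$$2K+2\frac{a'}a+\frac{R'}R=\frac{Q'}Q,\qquad R'=-2a'(a+a''),\qquad Q'=2(c^2-1)aa'.$$
I then insert $\varepsilon/(F^2a^4)=R/(a^2Q)$ and the stated formula for $T$, and verify that the identity holds. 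Equivalently, I can verify \eqref{gcmp}, $K=-Taa'$, since \eqref{gcmp} combined with unit speed encodes the same equation.

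The main obstacle is this algebraic verification: one has to show that the given closed form of $T$ makes the $a''$ terms and the $Q$ terms match exactly. I would organize it by multiplying through by $a^2QR$ and collecting coefficients of $a''$ and of $(a')^2$. This step should be checked carefully, because a mismatch would signal that $T$ must instead be determined from $A_1=-E$ directly, as the text says ("$T$ from $A_1=-E$").

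\textbf{Uniqueness and converse.} Uniqueness follows because $F$ and $T$ are forced by the formulas, Lemma \ref{gsep} fixes the warps, and \eqref{gpha} fixes $s_i'$. For the converse, take a unit-speed harmonic product with $a,b>0$ and $s_1'\ne0$. Normalize its momenta as in the statement, interchanging factors if necessary so that $C_1\ne0$, which gives $(c,\varepsilon,\sigma)$. Then \eqref{gsca} forces $R>0$ and the formula for $F$, the equation $A_1=-E$ forces the formula for $T$ (the same computation read backwards), and \eqref{pfirst} gives \eqref{gpha}. So the given product coincides with the reconstruction on its connected interval.
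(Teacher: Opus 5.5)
Your proposal is correct and takes essentially the same route as the paper: unit speed by direct substitution, the imaginary equations from the constant momenta, the reduction of the real equations to $A_1=-E$ via \eqref{psphe}, and the converse obtained by reading the same identities backwards. The verification you flagged as the main obstacle does go through: differentiating $F$ gives exactly $F'/F=-Taa'$, so $Ka'/a+b^2T=RT$, and with $\varepsilon/(F^2a^4)=R/(a^2Q)$ the residual $a''/a-R/(a^2Q)+1+RT$ vanishes identically by the formula for $T$.
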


\begin{proof}
By differentiating the first formula in \eqref{greg}, we obtain
\begin{equation}\label{gder}
     \frac{F'}F
      =a'\left(-\frac1{aQ}+\frac{a+a''}{R}\right)
        =-Taa',
\end{equation}
\begin{equation}\label{grad}
          a''=\frac{\varepsilon}{F^2a^3}-a-RTa.
\end{equation}
The phases satisfy \eqref{ppha}, and direct substitution yields
$$
     |\gamma'|^2
          =\frac{(a')^2+R}{b^2}=1,
         \qquad
      A_1+E=\frac{a''}{a}-\frac{\varepsilon}{F^2a^4}+1+RT=0.
$$
We then use \eqref{psphe} to obtain $B_1=-E$, which proves \eqref{phrm} also at turns.
Conversely, \eqref{gsca} determines $F$.
Combined with the first real equation, \eqref{gcmp} yields \eqref{grad} and hence $T$.
Finally, Lemma \ref{gsep} and integration of \eqref{gpha} establish uniqueness.
\end{proof}

\begin{remark}
Since $R\le1-a^2$, Cauchy--Schwarz provides the necessary bound
\begin{equation}\label{glow}
         F^2\ge\varepsilon\left(\frac1{a^2}+\frac{c^2}{1-a^2}\right)
     \ge\varepsilon(1+|c|)^2.
\end{equation}
For $c\ne0$, equality in the final bound holds exactly when $a'=0$ and $a^2=1/(1+|c|)$.
For $c=0$, $F^2\ge\varepsilon/a^2>\varepsilon$.
\end{remark}

If both phase momenta vanish, the unit-speed condition forces a meridian $a=\cos s$, $b=\sin s$, $s'=\pm1$, which cannot remain in the principal region on $\mathbb R$.
Hence Theorem \ref{grec} covers every entire unit-speed principal-region product, after interchanging the factors if necessary.

\begin{remark}[Joining magnitude arcs]\label{massy}
For fixed $(c,\varepsilon,\sigma)$, we can extend any finite ordered collection of compact admissible magnitude arcs to an entire admissible magnitude function with constant tails.
Indeed, with $a=\cos s$, admissibility is $0<s<\pi/2$ and $|s'|<1$, conditions preserved by sufficiently slow $C^\infty$-flat transitions.
Theorem \ref{grec} then reconstructs both warps and the phases.
On a constant tail the warps are constant and the phase-slope ratio is $s_2'/s_1'=ca^2/(1-a^2)$.
The orbit on its carrier torus is closed when this ratio is rational and dense when it is irrational.
\end{remark}

\begin{figure}[!htbp]
\centering
\includegraphics[width=.9\linewidth]{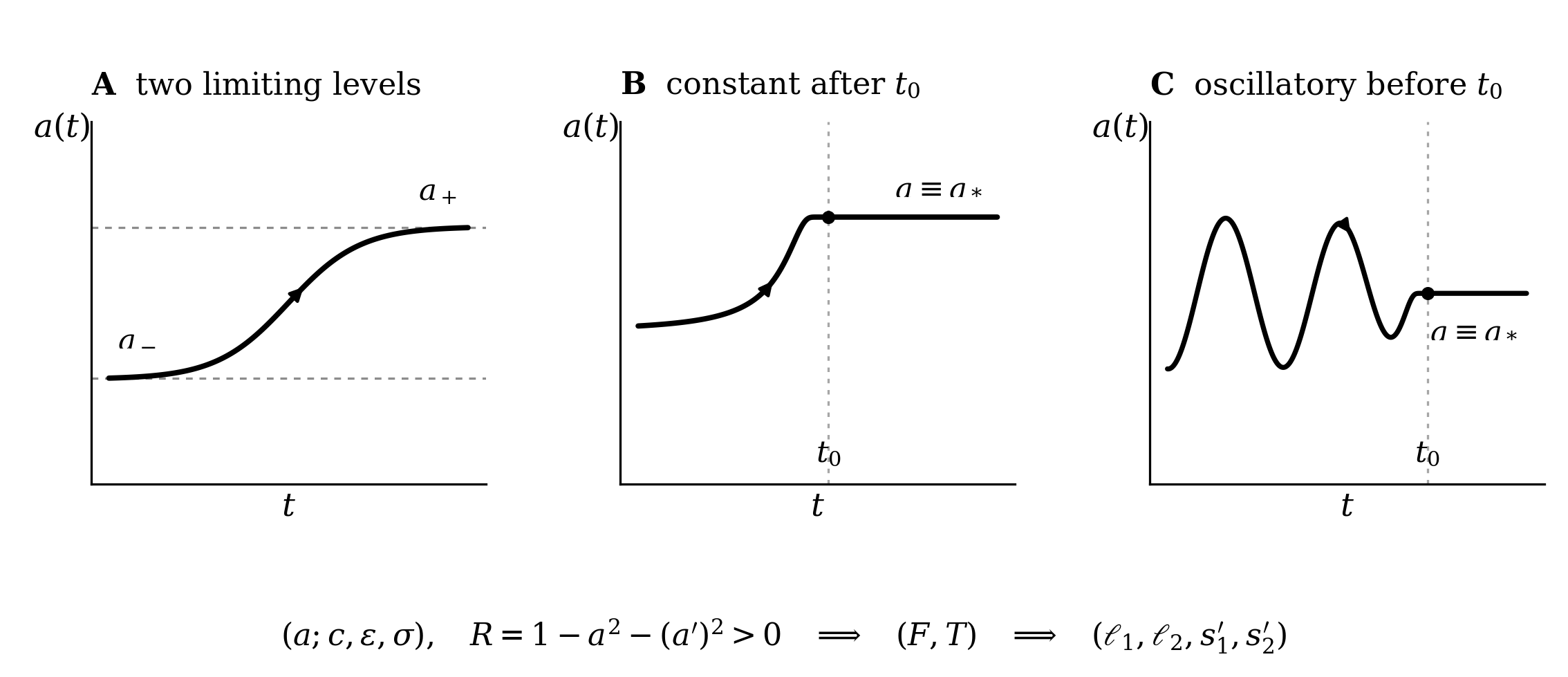}
\caption{Some admissible magnitude profiles used in the inverse construction
}
\label{fprofile}
\end{figure}
\FloatBarrier

Define $\chi(x)=e^{-1/x^2}$ for $x>0$ and $\chi(x)=0$ for $x\le0$, and consider
\begin{equation*}
      \text{(A)}\quad a_{\rm as}(t)=\frac3{10}+\frac7{20(1+e^{-t})},
      \quad \text{(B)}\quad a_{\rm flat}(t)=\frac34-\frac14\chi(-t),
      \quad \text{(C)}\quad a_{\rm osc}(t)=\frac12+\frac14\chi(-t)\sin t.
\end{equation*}
Here $a_{\rm flat}$ is nonconstant for $t<0$ and equals $3/4$ for $t\ge0$, with all derivatives matching at $t=0$.
The function $a_{\rm osc}$ describes an oscillatory transition of the same type.
These functions satisfy $1/4<a\le3/4$, $|a'|<1/2$, and hence $R>3/16$.
These three choices produce the patterns in Figure \ref{fprofile}; applying Theorem \ref{grec}, we reconstruct the corresponding warps and phases.

\subsection{Prescribing one warp globally}

Set $\mathcal D=\{(a,p):0<a<1,\ 1-a^2-p^2>0\}$.

Once we prescribe $\ell_1$, the second warp $\ell_2$, and hence the source metric, remains undetermined.
We therefore fix an initial state $(a_0,p_0)\in\mathcal D$, which specifies the magnitude and its initial direction, and let the evolution select $\ell_2$.
In particular, $F(t_0)^2=\varepsilon Q(a_0)/[a_0^2(1-a_0^2-p_0^2)]$, so varying $p_0$ generally produces different source metrics.

\begin{theorem}[One prescribed warping function]\label{gwar}
Fix a positive $\ell_1\in C^\infty(\mathbb R)$, $c\ne0$, $\varepsilon>0$, $\sigma\in\{-1,1\}$, $t_0\in\mathbb R$, and $(a_0,p_0)\in\mathcal D$.
There is a unique smooth magnitude function $a$ on $\mathbb R$ with $a(t_0)=a_0$, $a'(t_0)=p_0$, and $(a,a')\in\mathcal D$, whose reconstruction in Theorem \ref{grec} has the prescribed first warp.
The second warp is positive and smooth, and the phases are unique up to additive constants.
The resulting source metric is complete if both factors are complete.
\end{theorem}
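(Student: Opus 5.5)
The plan is to reduce the prescribed-warp problem to a second-order ODE for $a$ alone, show that this ODE is consistent with Theorem \ref{grec}, and then obtain global existence from a priori bounds furnished by the momentum inequality \eqref{glow}.

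\textbf{Reduction to an ODE.} On $\mathcal D$ set
$$
F(a,p)=\sqrt{\frac{\varepsilon Q(a)}{a^2(1-a^2-p^2)}}.
$$
Since $\ell_1$ is prescribed, the relation $F=\ell_1^{k_1}\ell_2^{k_2}$ forces
$$
\ell_2=\bigl(F/\ell_1^{k_1}\bigr)^{1/k_2},\qquad
T(t,a,p)=\frac{d_2}{\ell_2^2}-\frac{d_1}{\ell_1(t)^2},
$$
both smooth in $(t,a,p)$ on $\R\times\mathcal D$. Using \eqref{grad}, I pose the nonautonomous first-order system on $\R\times\mathcal D$
$$
a'=p,\qquad p'=\frac{\varepsilon}{F^2a^3}-a-RTa,\qquad R=1-a^2-p^2 .
$$
Its right-hand side is smooth on $\mathcal D$, so Picard--Lindel\"of gives a unique maximal solution through $(a_0,p_0)$ at $t_0$.

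\textbf{Consistency with Theorem \ref{grec}.} Along a solution, $\varepsilon/F^2=a^2R/Q$. Substituting this into the equation for $p'$ gives $T=\frac1{a^2Q}-\frac{a+a''}{aR}$, which is exactly the $T$ of \eqref{greg}. The $F$ of \eqref{greg} agrees with $F(a,a')$ by construction. Lemma \ref{gsep} therefore returns the pair $(\ell_1,\ell_2)$, and its first member is the prescribed $\ell_1$. Theorem \ref{grec} then yields harmonicity, the unit-speed property, and phases unique up to additive constants.

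Uniqueness of $a$ runs in the converse direction. By Theorem \ref{grec}, any magnitude whose reconstruction has first warp $\ell_1$ satisfies \eqref{grad} with $F$ and $T$ tied to $\ell_1$ as above. It therefore solves the same system, and ODE uniqueness gives uniqueness of $a$.

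\textbf{Global existence (the main obstacle).} I must rule out escape from $\mathcal D$, or blow-up, in finite time. Fix a compact interval $J$ inside the maximal interval and show that $(a,a')$ stays in a compact subset of $\mathcal D$ on $J$, with constants depending only on $J$.

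\emph{Lower bound on $\ell_2$.} Inequality \eqref{glow} gives $F\ge\sqrt\varepsilon(1+|c|)$. Since $\ell_1\le C_J$ on $J$, we get $\ell_2\ge\bigl(\sqrt\varepsilon(1+|c|)/C_J^{k_1}\bigr)^{1/k_2}>0$.

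\emph{Upper bound on $\ell_2$.} Rewrite \eqref{gcmp} for $y=\ell_2^2$:
$$
\frac{k_2}{2}\,y'=-k_1\frac{\ell_1'}{\ell_1}\,y-\Bigl(d_2-\frac{d_1y}{\ell_1^2}\Bigr)aa'.
$$
Here $|aa'|\le1$ and the coefficients involving $\ell_1$ are bounded on $J$, so $y$ satisfies a linear differential inequality. Gr\"onwall then bounds $y$ on $J$, and hence bounds $F$ on $J$.

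\emph{Compactness in $\mathcal D$.} Because $c\neq0$, we have $Q\ge\min(1,c^2)>0$. From $F^2=\varepsilon Q/(a^2R)$ it follows that $a^2R$ is bounded below on $J$. Since $a<1$ and $R\le1$, this keeps both $a$ and $R$ away from $0$. The first inequality of \eqref{glow}, $\varepsilon c^2/(1-a^2)\le F^2$, keeps $a$ away from $1$, again using $c\ne0$. Thus $(a,a')$ stays in a compact subset of $\mathcal D$ on $J$. Combined with the Gr\"onwall bounds, this prevents finite-time escape, so the solution is entire.

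\textbf{Positivity, smoothness and completeness.} The second warp $\ell_2$ is positive and smooth by the bounds above and Lemma \ref{gsep}.

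For completeness, suppose $(M_i,g_i)$ are complete. Since $\ell_1,\ell_2$ are positive and continuous, they are bounded below by positive constants on each compact $t$-interval. A Cauchy sequence in $g$ has bounded $t$-projection, because $g\ge dt^2$. On that bounded $t$-range, $g$ dominates a constant multiple of the complete product metric $dt^2+g_1+g_2$. Hence the Cauchy sequence converges, and the metric $g=dt^2+\ell_1^2g_1+\ell_2^2g_2$ is complete.
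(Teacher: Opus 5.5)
Your proposal is correct and follows the paper's proof essentially step for step. Both use the same first-order system on $\R\times\mathcal D$ and obtain a Gr\"onwall bound on $F$ from $F'/F=-Taa'$; your variable $y=\ell_2^2$ differs from the paper's $W=F^{2/k_2}$ only by the factor $\ell_1^{2k_1/k_2}$, and the paper's choice removes the $\ell_1'$ term. Both then use \eqref{glow} with $c\ne0$ to trap $(a,a')$ in a compact subset of $\mathcal D$, conclude uniqueness from the ODE, and prove completeness, you via Cauchy sequences and the paper via finite-length curves. One small precision: state the a priori bounds on $I_{\max}\cap J$ for an arbitrary compact $J\ni t_0$, rather than for $J$ inside the maximal interval; your remark that the constants depend only on $J$ already makes this work.
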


\begin{proof}
On $\mathbb R\times\mathcal D$, let
$
 R=1-a^2-p^2,
 \,
  Q=1+(c^2-1)a^2,
 \,
  F=\sqrt{\frac{\varepsilon Q}{a^2R}},
  \,
 L_2=\left(\frac{F}{\ell_1^{k_1}}\right)^{1/k_2},
 \,
  T=\frac{d_2}{L_2^2}-\frac{d_1}{\ell_1^2}.
$
The right-hand side is smooth on $\mathbb R\times\mathcal D$, so the following initial-value problem has a unique maximal solution:
\begin{equation}\label{gvec}
          a'=p,
       \qquad p'=\frac{\varepsilon}{F^2a^3}-a-RTa,
          \qquad (a,p)(t_0)=(a_0,p_0).
\end{equation}
Differentiating the definitions along a local solution, we find $F'/F=-Tap$.
Consequently, $W=F^{2/k_2}$ satisfies
\begin{equation}\label{gvolw}
       W'=\frac{2d_1aa'}{k_2\ell_1^2}W
      -\frac{2d_2aa'}{k_2}\ell_1^{2k_1/k_2}.
\end{equation}
Let $I_{\max}$ be the maximal interval, and fix a compact interval $J$ containing $t_0$.
On $I_{\max}\cap J$ we have $|aa'|\le1/2$, while $\ell_1$ and $\ell_1^{-1}$ are bounded.
It follows that $|W'|\le A_JW+B_J$, and Gronwall's inequality provides a bound $F\le M_J$ on $I_{\max}\cap J$.
Combined with \eqref{glow}, this gives
\begin{equation*}
       a^2\ge\frac{\varepsilon}{M_J^2},
       \qquad
       1-a^2\ge\frac{\varepsilon c^2}{M_J^2},
       \qquad
       R=\frac{\varepsilon Q}{a^2F^2}
          \ge\frac{\varepsilon\min\{1,c^2\}}{M_J^2}>0.
\end{equation*}
Thus $(a,a')$ remains in a compact subset of $\mathcal D$ on $I_{\max}\cap J$.
The continuation criterion excludes any finite endpoint, and therefore $I_{\max}=\mathbb R$.
Since every reconstruction with first warp $\ell_1$ must satisfy \eqref{gvec}, we also obtain uniqueness.
We then apply Lemma \ref{gsep}, \eqref{grad}, and Theorem \ref{grec} to conclude harmonicity and phase uniqueness.
Finally, consider a finite-length source curve.
Its $t$-coordinate lies in a compact interval on which both warps are bounded below, and its factor projections have finite length.
When the factors are complete, these projections converge, which proves completeness of the source metric.
\end{proof}

\begin{remark}
The condition $c\ne0$ is needed for the assertion about every initial state.
For $c=0$, use the signed amplitude $b$ and set $S=1-b^2-(b')^2$, $F=\sqrt{\varepsilon/S}$, and $\ell_2=(F/\ell_1^{k_1})^{1/k_2}$.
The equation $b''=-b+TbS$ is smooth at $b(t_*)=0$, $b'(t_*)\in(0,1)$, and its solution crosses the axis.
On the adjacent positive side, $a=\sqrt{1-b^2}$ has $R=b^2S/(1-b^2)>0$.
Hence a principal-region solution can reach its boundary in finite time.
\end{remark}

\section{Periodic closure with one prescribed warp}\label{seper}

The preceding section provides a flexible inverse construction, but it does not by itself close the profile.
We now prescribe a positive $P$-periodic first warp $\ell_1=L$ and determine when the reconstructed data produce a closed profile.
Periodicity of the magnitude and source metric is only the first step: the two phase increments must also have rational return.
When $M_1$ and $M_2$ are closed, such a return allows us to obtain a harmonic map from the closed manifold $S^1_{mP}\times M_1\times M_2$.
Here $S^1_{mP}=\R/(mP\Z)$.

\begin{proposition}[Phase-return criterion]\label{ccrit}
For an admissible smooth $P$-periodic magnitude with fixed $(c,\varepsilon,\sigma)$, the reconstructed warps and phase velocities are $P$-periodic.
Set $\Delta_i=\int_0^P s_i'\,dt$ for $i=1,2$.
The profile is $mP$-periodic exactly when
$
        m(\Delta_1,\Delta_2)\in(2\pi\mathbb Z)^2.
$
Hence, when $M_1$ and $M_2$ are closed, rational phase return produces a harmonic product on a finite cover of the base circle.
\end{proposition}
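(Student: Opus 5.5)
The plan is to read everything off the explicit reconstruction formulas in Theorem \ref{grec}. Each reconstructed quantity is a pointwise function of the magnitude and its first two derivatives. Write $b=\sqrt{1-a^2}$, $Q=1+(c^2-1)a^2$, $R=1-a^2-(a')^2$. Then $F$ and $T$ in \eqref{greg} are smooth functions of $(a(t),a'(t),a''(t))$ alone, with fixed constants $(c,\varepsilon,\sigma)$. So if $a$ is $P$-periodic and admissible, then $F$ and $T$ are $P$-periodic. Lemma \ref{gsep} gives $(\ell_1,\ell_2)$ as a smooth function of $(F,T)$ at each time, which makes both warps $P$-periodic as well. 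The phase velocities in \eqref{gpha} are $s_1'=\sigma\sqrt\varepsilon/(Fa^2)$ and $s_2'=\sigma c\sqrt\varepsilon/(Fb^2)$. These are again pointwise expressions in periodic data, so they are $P$-periodic. This settles the first assertion. In the setting of this section $\ell_1=L$ is prescribed, so periodicity of $\ell_1$ holds by hypothesis; we only need that the reconstruction is consistent with it.

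Next comes the closing criterion. Periodicity of $s_i'$ gives $s_i(t+P)=s_i(t)+\Delta_i$ for all $t$, and by induction $s_i(t+mP)=s_i(t)+m\Delta_i$. The profile is $\gamma(t)=(a(t)e^{is_1(t)},b(t)e^{is_2(t)})$, and $a,b$ are $P$-periodic and positive. Hence $\gamma(t+mP)=\gamma(t)$ for all $t$ if and only if $e^{im\Delta_1}=e^{im\Delta_2}=1$. Positivity of $a$ and $b$ is what lets us cancel the moduli. This is exactly $m(\Delta_1,\Delta_2)\in(2\pi\Z)^2$. Note that we need equality at one time only, since $e^{is_i(t+mP)-is_i(t)}$ is independent of $t$.

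Finally, assume $M_1,M_2$ are closed and the phase return is rational, so some $m\ge1$ works. Then $\gamma$, $\ell_1$ and $\ell_2$ are all $mP$-periodic. The metric $g=dt^2+\ell_1^2g_1+\ell_2^2g_2$ therefore descends to $S^1_{mP}\times M_1\times M_2$, and so does $G_\gamma$. Harmonicity is local, and Theorem \ref{grec} gives it on $\R\times M_1\times M_2$, which covers the quotient. So the descended map is harmonic on the closed manifold.

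There is no substantial obstacle. The only point needing care is that the reconstruction involves no integration other than the phases. The argument therefore rests on $F$ and $T$ being given by the closed-form pointwise expressions \eqref{greg}, not by solving an ODE with initial conditions. If they came from an ODE, periodicity could fail through drift. The phase constants are the only integration constants, and they enter only through the increments $\Delta_i$.
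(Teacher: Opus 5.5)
Your proposal is correct and follows the paper's own argument: the paper's proof is the one-line observation that the reconstruction formulas are pointwise in the $P$-periodic data, and that $s_i(t+mP)-s_i(t)=m\Delta_i$. You have expanded this with the details the paper leaves implicit, namely cancelling the positive periodic moduli and descending the metric and map to $S^1_{mP}\times M_1\times M_2$.
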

\begin{proof}
The reconstruction formulas are $P$-periodic, and $s_i(t+mP)-s_i(t)=m\Delta_i$.
\end{proof}

\subsection{Closing with the first warp fixed}

In Appendix \ref{apcl} we construct small $P$-periodic magnitude seeds for every $L$.
Starting from these seeds, we prove that the phase-increment map is nondegenerate.

\begin{theorem}[Closing with the first warp fixed]
For every $c_*\in\R$ and all sufficiently large $n$, there are $P$-periodic reconstructions with $\ell_1=L$, smoothly parametrized by $(\delta,c)$ near $(\delta_n,c_*)$, where $\delta_n\downarrow0$ and $\varepsilon=\delta^{2(k_2+1)}$.
The phase-increment map has rank two, so rational returns are dense.
On the slice $c=0$, its first component has nonzero $\delta$-derivative, and rational returns are dense there as well.
When $M_1$ and $M_2$ are closed, the returning members give harmonic maps from closed source manifolds on finite $P$-covers.
\end{theorem}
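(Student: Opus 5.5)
The proof has three steps: build periodic solutions of the scalar magnitude equation with the first warp $L$ fixed, compute the leading asymptotics of the two phase increments as $\delta\to0$, and invert those asymptotics.

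\textbf{Periodic solutions.} With $\ell_1=L$ prescribed, a reconstruction is exactly a solution of the planar system \eqref{gvec} on $\mathcal D$, with $(c,\varepsilon,\sigma)$ as parameters. A $P$-periodic reconstruction is therefore a fixed point $(a_0,p_0)$ of the period-$P$ Poincaré map $\Phi_{c,\varepsilon}$. By Proposition \ref{ccrit}, the warps and phase velocities it produces are then automatically $P$-periodic.

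We take as given the small seeds of Appendix \ref{apcl}. With $\varepsilon=\delta^{2(k_2+1)}$ we rescale $a=\delta\,\alpha$. Since $b\approx1$, the relation $F\approx L^{k_1}\ell_2^{k_2}$ and the bound \eqref{glow}, $F^2\gtrsim\varepsilon/a^2$, force $\ell_2\sim\delta^{1}$ up to a bounded factor. The equation $p'=\varepsilon/(F^2a^3)-a-RTa$ then becomes, at leading order, a $\delta$-independent $P$-periodic equation for $\alpha$. This is of Ermakov--Pinney type: centrifugal term $\propto\alpha^{-3}$, restoring coefficient coming from $T\approx d_2/\ell_2^2$.

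The appendix seeds are nondegenerate periodic solutions of this limit problem; nondegenerate means $1$ is not a Floquet multiplier. The sequence $\delta_n\downarrow0$ presumably enters because the limit problem is non-autonomous with period $P$ while its natural frequency scales with $\delta$, so resonances have to be avoided. Along a suitable sequence the seed is nondegenerate. The implicit-function theorem applied to $\Phi_{c,\varepsilon}(a_0,p_0)=(a_0,p_0)$ then gives a smooth family of periodic solutions in $(\delta,c)$ near $(\delta_n,c_*)$.

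\textbf{Phase increments.} From \eqref{gpha},
$$\Delta_1=\sigma\sqrt\varepsilon\int_0^P\frac{dt}{Fa^2},\qquad \Delta_2=\sigma c\sqrt\varepsilon\int_0^P\frac{dt}{F(1-a^2)}.$$
Because $a^2\sim\delta^2$ and $F\sim\delta^{k_2}$, we get $\Delta_1=\delta^{-1}(\Delta_1^0+O(\delta))$, where $\Delta_1^0$ is a positive integral determined by the seed. Likewise $\Delta_2=c\,\delta^{\,1}(\Delta_2^0+O(\delta))$ with $\Delta_2^0>0$.

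\textbf{Rank two and density.} The Jacobian with respect to $(\delta,c)$ is, to leading order,
$$\begin{pmatrix}-\delta^{-2}\Delta_1^0&0\\ \ast&\delta\Delta_2^0\end{pmatrix}.$$
Its determinant $-\delta^{-1}\Delta_1^0\Delta_2^0\neq0$ gives rank two once $\delta$ is small. On the slice $c=0$ the first row alone gives $\partial_\delta\Delta_1\neq0$. In either case the map is a local submersion (respectively a local diffeomorphism onto an interval), so preimages of the dense set $2\pi\mathbb Q^2$ (respectively $2\pi\mathbb Q$) are dense. On the slice $c=0$ we have $\Delta_2=0$, so only $\Delta_1$ needs a rational return.

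For a rational return $\Delta_i=2\pi r_i/m$, Proposition \ref{ccrit} produces an $mP$-periodic profile. Closed input manifolds then give a harmonic map from $S^1_{mP}\times M_1\times M_2$.

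\textbf{Main obstacle.} The delicate step is the uniform control of the error terms. We must show that the $O(\delta)$ corrections in the rescaled Poincaré map and in the phase integrals are $C^1$ in $(\delta,c)$, and that the derivative with respect to $\delta$ hits only the explicit leading power. This requires that the seed's nondegeneracy be uniform along $\delta_n$. I would first write the system in the variables $(\alpha,\alpha',\delta)$, verify that it extends smoothly to $\delta=0$, and only then differentiate.
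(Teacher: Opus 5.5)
Your outer structure matches the paper's. You take the periodic families $a=\delta y$ of Theorem \ref{pall} and note $\Theta_1=\delta^{-1}\int_0^P\sqrt{R_\delta}/(y\sqrt{Q_\delta})\,dt\approx\delta^{-1}J_1$ and $\Theta_2\approx c\delta J_2$, with $J_1=\int_0^Py_0^{-1}dt$ and $J_2=\int_0^Py_0\,dt$. You then get determinant $-J_1J_2/\delta+O(1)$ and conclude with the inverse-function theorem and Proposition \ref{ccrit}.

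\textbf{The gap.} It lies in the step you flag as the main obstacle, and it comes from misidentifying the limit problem. With $a=\delta y$, $\varepsilon=\delta^{2(k+1)}$ and $k=k_2$, the system \eqref{gvec} becomes
\[
\delta^2y''=\frac{R_\delta}{yQ_\delta}-Ay^{1+2/k}R_\delta^{1+1/k}Q_\delta^{-1/k}+\delta^2y(DR_\delta-1),\qquad A=d_2L^{2k_1/k},\quad D=\frac{d_1}{L^2},
\]
where $R_\delta=1-\delta^2(y^2+y'^2)$ and $Q_\delta=1+(c^2-1)\delta^2y^2$. There is no $\delta$-independent Ermakov--Pinney limit. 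At $\delta=0$ the equation is algebraic, with quasi-static root $y_0=d_2^{-k/(2k+2)}L^{-k_1/(k+1)}$. The linearization is, up to a first-order drift, $\delta^2h''+qh$ with $q=(2+2/k)y_0^{-2}$, which oscillates at frequency $\sqrt q/\delta$. The regular Pinney limit you describe belongs to the other scaling, $c=\delta^{-1}$, $a=\sqrt\delta\,x$, of Theorem \ref{pnonc}.

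This singular structure is exactly why only the non-resonant values $\delta_n=S/(2\pi(n+\tfrac12))$ appear. A nondegenerate seed of a regular limit would persist for all small $\delta$, so your own explanation of $\delta_n$ contradicts your description of the limit. Moreover, $y_0$ is not an exact solution for $\delta>0$, so there is no seed at which to apply the implicit-function theorem directly. The paper instead corrects to $y_0-\delta^2E_0/q$, whose residual is $O(\delta^4)$, and runs a contraction.

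\textbf{Why your fix fails.} You propose to write the system in $(\alpha,\alpha',\delta)$, check that it extends smoothly to $\delta=0$, and then differentiate. This cannot be carried out. The vector field carries a factor $\delta^{-2}$. The linearized flow rotates through an angle of order $S/\delta$, whose $\delta$-derivative is of order $\delta^{-2}$. In the paper's functional setting, the inverse of the linearization at $\delta_n$ is only $O(\delta_n^{-1})$ from $L^2$ to the scaled norm $\|h\|_{X_\delta}=\|h\|_2+\delta\|h'\|_2+\delta^2\|h''\|_2$.

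\textbf{What is needed instead.} The Jacobian computation actually needs \eqref{pfbd}, namely $\|\partial_\delta y\|_{X_{\delta_n}}=O(1)$ and $\|\partial_cy\|_{X_{\delta_n}}=O(\delta_n)$. Write the displayed equation as $\mathcal H_\delta(y)=0$. The bounds follow by implicit differentiation from $\|\partial_\delta\mathcal H_\delta(y)\|_2=O(\delta)$ and $\|\partial_c\mathcal H_\delta(y)\|_2=O(\delta^2)$. These in turn use that the nonlinearity is even in $z=\delta y'$ and depends on $c$ only through $Q_\delta$. With them one can differentiate the phase integrals under the integral sign:
\begin{itemize}
\item The term $\delta^2y'\partial_\delta y'$ in $\partial_\delta R_\delta$ is $O(\delta)$, even though $\partial_\delta y'$ is only controlled as $O(\delta^{-1})$ in $L^2$. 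This gives $\partial_\delta\Theta_1=-J_1\delta^{-2}+O(\delta^{-1})$.
\item The off-diagonal entry is $\partial_c\Theta_1=O(1)$, not $0$. This is harmless only because $\partial_\delta\Theta_2=cJ_2+O(\delta)$ is bounded.
\end{itemize}
Without \eqref{pfbd}, or an equivalent non-resonant singular-perturbation estimate, the leading-order Jacobian you write down is not justified.
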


\begin{proof}
Use Theorem \ref{pall} with $\sigma=1$ and $a=\delta y$.
Set $R_\delta=1-\delta^2(y^2+y'^2)$ and $Q_\delta=1+(c^2-1)\delta^2y^2$.
The phase increments are
\[
       \Theta_1=\frac1\delta\int_0^P\frac{\sqrt{R_\delta}}{y\sqrt{Q_\delta}}\,dt,
       \qquad
       \Theta_2=c\delta\int_0^P\frac{y\sqrt{R_\delta}}{(1-\delta^2y^2)\sqrt{Q_\delta}}\,dt.
\]
At the base points, set $J_1=\int_0^P y_0^{-1}\,dt>0$ and $J_2=\int_0^P y_0\,dt>0$.
Using the smooth dependence established in Appendix \ref{apcl}, differentiating under the integral, and applying \eqref{pfbd}, we obtain
\[
 \begin{aligned}
 \partial_\delta\Theta_1&=-J_1\delta^{-2}+O(\delta^{-1}),
 &\partial_c\Theta_1&=O(1),\\
 \partial_\delta\Theta_2&=cJ_2+O(\delta),
 &\partial_c\Theta_2&=\delta J_2+O(\delta^2).
 \end{aligned}
\]
Consequently,
\[
       \det D_{(\delta,c)}(\Theta_1,\Theta_2)
          =-\frac{J_1J_2}{\delta}+O(1)\ne0,
       \qquad
       \partial_\delta\Theta_1(\delta,0)
          =-\frac{J_1}{\delta^2}+O(\delta^{-1})\ne0.
\]
The inverse-function theorem, combined with Proposition \ref{ccrit}, now proves the stated density and closure.
\end{proof}

If $L$ is nonconstant, none of these magnitudes is constant: otherwise $F,T$, and hence $L$ by Lemma \ref{gsep}, would be constant.

\subsection{A periodic branch with uniformly positive warps}

In the preceding branch, $F\sim\delta^{k_2}$ and the second source warp collapses.
We now choose a different scaling, for which the momentum ratio grows while $\varepsilon c^2$ has a nonzero limit.
As a result, both source warps remain uniformly positive, and the limiting magnitude equation becomes a periodic Pinney equation.

\begin{theorem}[Periodic branch with uniformly positive warps]\label{pnonc}
For every positive smooth $P$-periodic $L$, there are $\mu_*>0$, $\delta_0>0$, and a smooth two-parameter family of doubly spiral reconstructions near $(\mu_*,0)$, with $0<\delta<\delta_0$ and $\ell_1=L$.
Their magnitudes and source warps are $P$-periodic.
As $\delta\to0$, one has $a\to0$ in $C^\infty$, while both warps remain uniformly bounded above and away from zero.
The phase-increment map $(\mu,\delta)\mapsto(D_1,D_2)$ has rank two.
\end{theorem}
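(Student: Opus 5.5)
The plan is to rescale so that the first magnitude collapses while $\varepsilon c^2$ stays fixed. I will set $\sigma=1$ and
\[
   a=\delta y,\qquad \varepsilon=\delta^2,\qquad c=\mu/\delta ,
\]
so that $\varepsilon c^2=\mu^2$ and the second phase momentum equals $\mu$. Substituting into the quantities of Theorem \ref{grec} gives
\[
   R=1-\delta^2(y^2+y'^2),\qquad Q=1+(\mu^2-\delta^2)y^2,\qquad F^2=\frac{Q}{y^2R}.
\]
Hence at $\delta=0$ we get $F^2=y^{-2}+\mu^2$. This is bounded above and away from zero as long as $y$ stays in a compact subset of $(0,\infty)$. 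The second warp is $\ell_2=(F/L^{k_1})^{1/k_2}$, as in the proof of Theorem \ref{gwar}, so it is then also uniformly positive, and so is $T=d_2/\ell_2^2-d_1/L^2$.

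Dividing \eqref{gvec} by $\delta$ turns it into an equation for $y$:
\[
   y''=\frac{1}{F^2y^3}-y-RTy .
\]
Its right-hand side depends smoothly on $(y,y',t,\mu,\delta)$ up to $\delta=0$ and is $P$-periodic in $t$. At $\delta=0$ we have $R=1$, and the equation becomes a periodically forced Pinney-type equation
\[
   y''+\bigl(1+T_0(t,y)\bigr)y=\frac{1}{y(1+\mu^2y^2)} .
\]
Here $T_0$ is computed from $L$ and from $F_0^2=y^{-2}+\mu^2$.

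The key step, and the one I expect to be the main obstacle, is to find $\mu_*>0$ and a positive $P$-periodic solution $y_*$ of this limit equation that is nondegenerate, meaning its linearization has no nontrivial $P$-periodic solution. I would first try the following route. The singular term $1/y$ is repulsive near $0$, and for large $y$ the restoring term dominates, so sub- and supersolution or degree arguments for periodic singular equations should give a positive periodic solution. I would then choose $\mu_*$ to avoid the countably many parameters at which degeneracy occurs, using analytic dependence on $\mu$. Alternatively, I could adapt the seed construction of Appendix \ref{apcl}. Once $y_*$ is in hand, the implicit-function theorem produces a smooth family $y(\cdot;\mu,\delta)$ of $P$-periodic solutions near $(\mu_*,0)$ that converges to $y_*$ in $C^\infty$. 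Consequently $a=\delta y\to0$ in $C^\infty$, the warps stay uniformly controlled, and $(a,a')\in\mathcal D$ for small $\delta$. Theorem \ref{grec} and Proposition \ref{ccrit} then supply periodic warps and phase velocities. Both phases rotate, so the products are doubly spiral.

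Finally I will compute the phase increments from \eqref{gpha}:
\[
   D_1=\frac1\delta\int_0^P\frac{dt}{Fy^2},\qquad
   D_2=\mu\int_0^P\frac{dt}{F(1-\delta^2y^2)} .
\]
Differentiating gives $\partial_\delta D_1=-\delta^{-2}\int_0^P (F_0y_*^2)^{-1}dt+O(\delta^{-1})$ and $\partial_\mu D_1=O(\delta^{-1})$. For the second increment, $\partial_\delta D_2=O(\delta)$ because the expansion is even in $\delta$. Also $\partial_\mu D_2=\partial_\mu\bigl(\mu\int_0^P F_0^{-1}dt\bigr)+O(\delta^2)$, where the $\mu$-derivative includes the variation $\partial_\mu y$ of the solution. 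The Jacobian determinant is therefore $-\delta^{-2}\bigl(\int_0^P (F_0y_*^2)^{-1}dt\bigr)\,\partial_\mu D_2|_{\delta=0}+O(\delta^{-1})$, and rank two follows once $\partial_\mu D_2|_{\delta=0}\neq0$ at $\mu_*$. The explicit part of this derivative is $\int_0^P y_*(1+\mu^2y_*^2)^{-3/2}dt>0$. The variation $\partial_\mu y$ might cancel it, so this too is a nonvanishing condition on $\mu_*$. I would secure it by the same genericity argument, using that the quantity is analytic and not identically zero in $\mu$, or else check it directly near a convenient $\mu$.
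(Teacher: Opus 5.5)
There is a genuine gap, and it occurs at the first computation. With $a=\delta y$ and $\varepsilon=\delta^2$ one has $\varepsilon/(F^2a^3)=1/(\delta F^2y^3)=R/(\delta yQ)$. Dividing \eqref{gvec} by $\delta$ therefore gives
\[
   y''=\frac{R}{\delta^2\,yQ}-y-RTy,
\]
not $y''=1/(F^2y^3)-y-RTy$: a factor $\delta^{-2}$ was lost. In your regime, with $y$ in a compact subset of $(0,\infty)$, $F^2=Q/(y^2R)$ is bounded above and below, so $\ell_2$ and $T$ are bounded. Integrating over a period then yields $\delta^{-2}\int_0^P R/(yQ)\,dt=\int_0^P(1+RT)y\,dt=O(1)$, which is impossible for small $\delta$. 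Hence your ansatz admits no periodic magnitudes at all. The limit equation, the nondegeneracy discussion and the Jacobian expansion with $\partial_\delta D_1\sim\delta^{-2}$ all concern a family that does not exist.

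The Pinney term must balance the linear terms, which requires $\varepsilon\sim F^2a^4$. With $F$ bounded this forces $a\sim\varepsilon^{1/4}$ and $F^2\to\varepsilon c^2$, i.e.\ $c\sim a^{-2}$ rather than your $c\sim a^{-1}$. That is the paper's scaling $c=\delta^{-1}$, $\varepsilon=\delta^2(d_2/\mu)^k$, $a=\sqrt\delta\,x$. Then $Q=\delta^{-1}q_\delta$, $F\to(d_2/\mu)^{k/2}$ is constant, $\ell_2\to\sqrt{d_2/\mu}\,L^{-k_1/k}$, and the limit is the exact Pinney equation \eqref{pnlim}.

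Even after this correction, your key step is only sketched. Sub/supersolution or degree arguments give existence at best. Choosing $\mu_*$ ``generically using analytic dependence on $\mu$'' presupposes a nondegenerate solution branch. And ``not identically zero'' for $\partial_\mu D_2$ is exactly what needs proof. The paper gets both facts from the Pinney structure (Lemma \ref{pnell}). For large $\mu$ with strictly elliptic Hill monodromy, $x=\sqrt{rCr^T}$ is a positive periodic solution. The identity $\mathcal L(xf(\theta))=x^{-3}(f_{\theta\theta}+4f)$, together with $\Theta\notin\pi\mathbb Z$, makes the linearization invertible. In the correct scaling the increments are bounded, with $D_1(\mu,0)=\int_0^Px_\mu^{-2}\,dt$ and $D_2(\mu,0)=P$. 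Rank two then follows explicitly from $\partial_\mu D_1(\mu,0)=\frac12\int_0^Pwx_\mu^2\,dt>0$, $\partial_\mu D_2(\mu,0)=0$, and $\partial_\delta D_2(\mu,0)=\frac12\int_0^P(\rho-\mu w)x_\mu^2\,dt<0$ once $\mu w>\rho$.
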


\begin{proof}
Set $k=k_2$, $\rho=d_1/L^2$ and $w=L^{2k_1/k}$.
By Lemma \ref{pnell}, we choose $\mu_*>0$ with $\mu_*w>\rho$ and a positive periodic solution $x_{\mu_*}$ of
\begin{equation}\label{pnlim}
      x_\mu''+(1+\mu w-\rho)x_\mu=x_\mu^{-3}
\end{equation}
whose periodic linearization is invertible.
Set
\[
         c=\delta^{-1},
      \qquad \varepsilon=\delta^2(d_2/\mu)^k,
        \qquad a=\sqrt\delta\,x,
\]
with $q_\delta=x^2+\delta-\delta^2x^2$, $r_\delta=1-\delta(x^2+x'^2)$.
Then $Q=\delta^{-1}q_\delta$, $R=r_\delta$, and \eqref{gvec} becomes
\begin{equation}\label{pnex}
      x''=\frac{r_\delta}{xq_\delta}-x
          -r_\delta x\left[
       \mu w\left(\frac{x^2r_\delta}{q_\delta}\right)^{1/k}-\rho
     \right].
\end{equation}
For $0<\alpha<1$, its residual near $(\mu_*,0,x_{\mu_*})$ is smooth as a map $\mathbb R^2\times C^{2,\alpha}_{\rm per}\to C^{0,\alpha}_{\rm per}$, also for small negative auxiliary $\delta$, since $x,q_\delta,r_\delta$ remain positive.
Applying the implicit-function theorem and then bootstrapping, we obtain $x_{\mu,\delta}$ with smooth dependence in every $C^j$ norm near a rectangle $\overline J\times[0,\delta_0]$, where $\mu_*\in J\Subset(0,\infty)$.
For $\delta>0$ the reconstruction is admissible, with
\[
      F=(d_2/\mu)^{k/2}
     \sqrt{\frac{q_\delta}{x^2r_\delta}},
          \qquad
          F\longrightarrow(d_2/\mu)^{k/2},
          \qquad
          \ell_2\longrightarrow\sqrt{d_2/\mu}\,L^{-k_1/k}
\]
in $C^\infty$, uniformly for $\mu\in\overline J$; these limits provide the required warp bounds.

For $\sigma=1$ the phase increments extend smoothly to $\delta=0$:
\[
     D_1=\int_0^P\frac{\sqrt{r_\delta}}{x\sqrt{q_\delta}}\,dt,
          \qquad
       D_2=\int_0^P
     \frac{x\sqrt{r_\delta}}
         {(1-\delta x^2)\sqrt{q_\delta}}\,dt.
\]
Here $D_1(\mu,0)=\int_0^P x_\mu^{-2}\,dt$ and $D_2(\mu,0)=P$.
For $h=\partial_\mu x_\mu$ and $\mathcal L_\mu=\partial_t^2+1+\mu w-\rho+3x_\mu^{-4}$,
differentiating \eqref{pnlim}, we obtain $\mathcal L_\mu h=-wx_\mu$, while $\mathcal L_\mu x_\mu=4x_\mu^{-3}$.
Periodic integration by parts then produces
$
          \partial_\mu D_1(\mu,0)
        =\frac12\int_0^P wx_\mu^2\,dt>0.
$
The second integrand equals one at $\delta=0$, so its $x$-variation drops out of $\partial_\delta D_2$.
By differentiating directly and using \eqref{pnlim}, we find
$
         \partial_\delta D_2(\mu,0)
         =\frac12\int_0^P(\rho-\mu w)x_\mu^2\,dt.
$
Since $\partial_\mu D_2(\mu,0)=0$, we arrive at
\begin{equation}\label{pndet}
       \det\frac{\partial(D_1,D_2)}{\partial(\mu,\delta)}(\mu_*,0)
      =\frac14
         \left(\int_0^P wx_{\mu_*}^2\,dt\right)
          \left(\int_0^P(\rho-\mu_*w)x_{\mu_*}^2\,dt\right)<0.
\end{equation}
Shrink the rectangle so that the determinant remains nonzero and $\mu w>\rho$ on $\overline J\times[0,P]$.
\end{proof}

\begin{corollary}[Dense closed profiles and growing covers]\label{pdiv}
When $M_1$ and $M_2$ are closed, the members of Theorem \ref{pnonc} that close on finite $P$-covers are dense in the parameter family and give harmonic maps from closed source manifolds.
They can be selected with $(\mu_j,\delta_j)\to(\mu_*,0)$ while the least number of base periods required for closure tends to infinity.
\end{corollary}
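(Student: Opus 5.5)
The argument combines the rank-two conclusion of Theorem \ref{pnonc} with Proposition \ref{ccrit}.

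\textbf{Setup.} For each parameter $(\mu,\delta)$ in the rectangle $\overline J\times(0,\delta_0)$, Theorem \ref{pnonc} gives an admissible $P$-periodic magnitude with $\ell_1=L$, together with periodic reconstructed warps. By Proposition \ref{ccrit}, such a member closes on the $m$-fold cover exactly when
\[
m(D_1,D_2)\in(2\pi\mathbb Z)^2,
\]
that is, exactly when $(D_1,D_2)\in 2\pi\mathbb Q^2$ with common denominator $m$.

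\textbf{Density.} Since $\det\partial(D_1,D_2)/\partial(\mu,\delta)\ne0$ on the shrunken rectangle, the inverse-function theorem makes $\Phi=(D_1,D_2)$ a local diffeomorphism near every parameter point. The set $2\pi\mathbb Q^2$ is dense in $\mathbb R^2$, so its preimage under a local diffeomorphism is dense in the open set $\overline J\times(0,\delta_0)$. When $M_1,M_2$ are closed, each such member gives a harmonic map from $S^1_{mP}\times M_1\times M_2$, which is closed.

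\textbf{Growing covers.} The determinant is nonzero at $(\mu_*,0)$, and $\Phi$ extends smoothly to $\delta=0$ by Theorem \ref{pnonc}. Hence $\Phi$ maps a neighborhood $U$ of $(\mu_*,0)$ diffeomorphically onto a neighborhood of
\[
\Phi(\mu_*,0)=\Bigl(\int_0^P x_{\mu_*}^{-2}\,dt,\;P\Bigr).
\]
The half $U\cap\{\delta>0\}$ is mapped onto an open set whose closure contains this point. Choose points $p_j=2\pi(u_j/m_j,v_j/m_j)$ in this image, with fractions in lowest common terms, such that:
\begin{itemize}
\item[(i)] $p_j\to\Phi(\mu_*,0)$;
\item[(ii)] the least common denominator $m_j\to\infty$.
\end{itemize}
This is possible because rational points of bounded denominator form a discrete lattice-like set. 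So inside any shrinking neighborhood of the limit, avoiding the finitely many points with denominator at most $N$ still leaves rational points, namely those of denominator larger than $N$.

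The least closing multiplier of the member $\Phi^{-1}(p_j)$ is exactly $m_j$. The reason is that $m(D_1,D_2)\in(2\pi\mathbb Z)^2$ holds iff $m_j\mid m$. Setting $(\mu_j,\delta_j)=\Phi^{-1}(p_j)$, the local diffeomorphism gives $(\mu_j,\delta_j)\to(\mu_*,0)$, with $\delta_j>0$ because $p_j$ was taken in the image of the open half.

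\textbf{Main point of care.} One must check that the image of the half $\{\delta>0\}$ accumulates at $\Phi(\mu_*,0)$ and still contains an open set there. This follows from the extension of $\Phi$ as a diffeomorphism across $\delta=0$, together with openness. One must also note that the least $m$ is the denominator and not merely bounded by it; this is the divisibility statement above.
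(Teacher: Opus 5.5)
Your proof is correct. The density part is the same as the paper's: rank two on the shrunken rectangle, combined with Proposition \ref{ccrit}. You differ from the paper in how you make the closing multipliers grow.

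The paper never inverts $\Phi=(D_1,D_2)$ at the boundary point. It takes rational returns $(\mu_j,\delta_j)\to(\mu_*,0)$ from the dense set and uses $\partial_\delta D_2(\mu,0)<0$ to guarantee $D_2<P$ along them. It then argues by contradiction: if the reduced denominators stayed bounded, $D_2/(2\pi)$ would range over finitely many rationals. It would therefore eventually equal its limit $P/(2\pi)$, contradicting $D_2<P$.

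You instead apply the inverse function theorem at $(\mu_*,0)$ itself. This is legitimate, because the proof of Theorem \ref{pnonc} keeps the residual smooth for small negative auxiliary $\delta$, so $\Phi$ is smooth on a full neighbourhood of $(\mu_*,0)$. You then choose rational targets of large common denominator inside the open image of the half-neighbourhood $\{\delta>0\}$.

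What each route buys:
\begin{itemize}
\item Your route needs no sign information, only the nonvanishing Jacobian at the limit point and the two-sided extension. It also gives more control: any rational targets in that image are realized. Your divisibility remark shows that the least multiplier is exactly the common denominator, which the paper leaves implicit.
\item The paper's route avoids the diffeomorphism onto a neighbourhood of $\Phi(\mu_*,0)$. It gets the unbounded denominators from a one-line discreteness argument along any sequence of returns. The price is that it needs $D_2\neq P$ along that sequence, which is exactly where the sign of $\partial_\delta D_2(\mu_*,0)$ enters.
\end{itemize}
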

\begin{proof}
The rank-two property and Proposition \ref{ccrit} provide dense rational returns.
Since $\partial_\delta D_2(\mu,0)<0$, choose such returns tending to $(\mu_*,0)$ with $D_2<P$.
Their reduced denominators tend to infinity: otherwise a subsequence would be constant and equal to its limit $P/(2\pi)$, contradicting $D_2<P$.
The least covering degrees therefore tend to infinity.
\end{proof}

\subsection{Persistence under changes of the first warp}

\begin{theorem}[Fixed-return deformation]
Fix $r\ge2$, $0<\alpha<1$, a positive smooth $P$-periodic $L_0$, and nondegenerate closed data $(\mu_*,\delta_*,x_*)$ from Corollary \ref{pbase}, with return $2\pi\rho$, $\rho\in\mathbb Q^2$.
Every positive $L$ sufficiently close to $L_0$ in $C^{r,\alpha}_{\rm per}$ has a unique nearby datum $(\mu(L),\delta(L),x(L))$ solving \eqref{pnex}, with the same return and $x(L)\in C^{r+2,\alpha}_{\rm per}$, depending smoothly on $L$.
The products close on the same $mP$-cover for every integer $m\ge1$ with $m\rho\in\mathbb Z^2$.
Their magnitudes and source warps remain bounded away from zero in a neighborhood of the chosen positive-$\delta_*$ datum.
\end{theorem}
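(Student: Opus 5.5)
We will run the implicit-function theorem once more, on a Banach space that includes the prescribed warp $L$ as a parameter. From the proof of Theorem \ref{pnonc}, the ingredients of \eqref{pnex} that depend on $L$ are $\rho=d_1/L^2$ and $w=L^{2k_1/k}$. These are smooth (Nemytskii) functions of $L$ on the open set of positive functions in $C^{r,\alpha}_{\rm per}$, with values in $C^{r,\alpha}_{\rm per}$.

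\emph{The map.} Define
\[
\Phi(L,\mu,\delta,x)=\Bigl(\mathcal N(L,\mu,\delta,x),\;D_1(L,\mu,\delta,x)-2\pi\rho_1,\;D_2(L,\mu,\delta,x)-2\pi\rho_2\Bigr),
\]
with $\Phi:\mathcal U\subset C^{r,\alpha}_{\rm per}\times\R^2\times C^{r+2,\alpha}_{\rm per}\to C^{r,\alpha}_{\rm per}\times\R^2$. Here $\mathcal N$ is the residual of \eqref{pnex}, namely $x''$ minus its right-hand side, and $D_1,D_2$ are the phase-increment integrals from the proof of Theorem \ref{pnonc}, with $\rho_1,\rho_2$ the components of $\rho$. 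The domain $\mathcal U$ is a neighbourhood of $(L_0,\mu_*,\delta_*,x_*)$ on which $L>0$, $x>0$, $q_\delta>0$, $r_\delta>0$ and $1-\delta x^2>0$. These are open conditions, and they hold at the base datum because it is an admissible reconstruction with $\delta_*>0$. On $\mathcal U$, $\Phi$ is a composition of smooth superposition maps, $x\mapsto x''$, and integrals, so it is $C^\infty$. We have $\Phi(L_0,\mu_*,\delta_*,x_*)=0$ by assumption.

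\emph{Invertibility in $(\mu,\delta,x)$.} This is the main step, and I expect to take it from the nondegeneracy built into Corollary \ref{pbase}. Presumably nondegeneracy means two things: first, the periodic linearization $\mathcal L=\partial_x\mathcal N$ at $x_*$ is invertible $C^{2,\alpha}_{\rm per}\to C^{0,\alpha}_{\rm per}$; second, the reduced return map $(\mu,\delta)\mapsto(D_1,D_2)(\mu,\delta,x_{\mu,\delta})$ has rank two at $(\mu_*,\delta_*)$. The second point is exactly what Theorem \ref{pnonc} provides on its family, since \eqref{pndet} is nonzero at $\delta=0$ and persists for small $\delta_*$.

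Given this, I would write the linearization as a block operator and eliminate the $x$-component with $\mathcal L^{-1}$. The Schur complement is then precisely the $2\times2$ Jacobian of the reduced return map, so the full derivative is an isomorphism. Two technical points need care:
\begin{enumerate}
\item $\mathcal L$ must be an isomorphism $C^{r+2,\alpha}_{\rm per}\to C^{r,\alpha}_{\rm per}$ and not only at the $C^{2,\alpha}$ level. Since $x_*$ is $C^\infty$, $\mathcal L$ is a second-order operator with smooth periodic coefficients. Its kernel is unchanged, it is Fredholm of index zero, and Schauder regularity upgrades surjectivity.
\item If the nondegeneracy in Corollary \ref{pbase} is stated differently, for instance as a transversality condition, I would first reduce it to the two conditions above.
\end{enumerate}

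\emph{Conclusion.} The implicit-function theorem then yields a unique smooth map $L\mapsto(\mu(L),\delta(L),x(L))$ near $L_0$ with $\Phi=0$ and $x(L)\in C^{r+2,\alpha}_{\rm per}$. Uniqueness holds in a neighbourhood within that space.

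Next, apply the reconstruction of Theorem \ref{grec} with $a=\sqrt{\delta}\,x$, $c=\delta^{-1}$ and $\varepsilon=\delta^2(d_2/\mu)^k$. The formula $F=(d_2/\mu)^{k/2}\sqrt{q_\delta/(x^2r_\delta)}$ together with Lemma \ref{gsep} gives positive $P$-periodic warps $\ell_1=L$ and $\ell_2$. Continuity in $L$, combined with the openness of $\mathcal U$, keeps $a$, $\ell_1$ and $\ell_2$ uniformly bounded away from zero, because $\delta(L)$ stays near $\delta_*>0$ and $x(L)$ stays near $x_*$.

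The return is fixed at $2\pi\rho$ by construction. Proposition \ref{ccrit} then shows that the profile is $mP$-periodic whenever $m\rho\in\Z^2$, and this condition is independent of $L$. Hence the products close on the same $mP$-covers as the base datum.
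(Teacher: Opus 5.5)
Your proposal is correct and is essentially the paper's argument. The paper applies the implicit-function theorem twice: first it solves \eqref{pnex} for $x=X(L,\mu,\delta)$ using the invertibility of $D_x\mathcal R$ from Corollary \ref{pbase}, and then it solves for $(\mu,\delta)$ using the invertible reduced return derivative; your single application with the Schur-complement elimination accomplishes exactly the same thing, and your explicit H\"older setting $C^{r,\alpha}_{\rm per}\times\R^2\times C^{r+2,\alpha}_{\rm per}$ with the Schauder upgrade of the inverse spells out what the paper compresses into ``positivity and regularity persist.''
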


\begin{proof}
Let $\mathcal R$ be the residual of \eqref{pnex}.
By Corollary \ref{pbase}, the operator $D_x\mathcal R$ is invertible.
The implicit-function theorem therefore first produces $x=X(L,\mu,\delta)$.
Since the reduced return has an invertible $(\mu,\delta)$-derivative, a second application fixes the value $2\pi\rho$.
Positivity and regularity persist, and Proposition \ref{ccrit} supplies closure.
\end{proof}

The fixed-return locus has codimension two and consists of closed graph geodesics of the varying periodic metrics $\mathcal K$ (Remark \ref{hperi}).
For circle identity inputs, we obtain harmonic maps from compact source manifolds into $\Sph^3$.
For real equatorial sphere inputs, $G^*g_{\rm round}=dt^2+a^2g_{\Sph^p}+b^2g_{\Sph^q}$, so the closed products are harmonic immersions into $\Sph^{2p+2q+3}$.

\section{Fixed-metric closure via Neumann dynamics}\label{seneu}

In Section \ref{seper} we fixed one source warp and reconstructed the second together with the profile.
We then used rational phase matching to produce closed profiles, generally on finite covers.
We now fix the entire source metric.
In the sector $F^2T=\kappa$, the change $d\tau=dt/F$ converts the profile equation into one autonomous Neumann system.
Closure therefore becomes an orbit-selection problem: we choose initial data whose trajectory returns after one source period or on a finite cover.
We use two mechanisms below, namely rational relative equilibria on carrier tori and period-matched rotations on the invariant real circle.

\subsection{The Neumann reduction}

\begin{theorem}[Time-changed Neumann reduction]\label{ntime}
Let $F\in C^\infty(I,(0,\infty))$ and $\kappa\in\R$, set $T=\kappa/F^2$, define $\tau(t)=\int_{t_0}^tF(\xi)^{-1}\,d\xi$, and let $(\ell_1,\ell_2)$ be the pair in Lemma \ref{gsep}.
The product $G_\gamma$ is harmonic for $g_{F,\kappa}=dt^2+\ell_1^2g_1+\ell_2^2g_2$ if and only if $\gamma=\Gamma\circ\tau$, where $\Gamma:\tau(I)\to\Sph^3$ solves the Neumann equation
\begin{equation}\label{nneu}
     \nabla_\tau^{\Sph^3}\dot\Gamma
          =\kappa\operatorname{grad}_{\Sph^3}U(\Gamma),
        \qquad U(z_1,z_2)=|z_2|^2/2,
\end{equation}
with mechanical potential $-\kappa U$.
\end{theorem}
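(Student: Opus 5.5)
The plan is to rewrite the Euler--Lagrange equation \eqref{peule} intrinsically on $\Sph^3$, separate the part of the potential that does not depend on the profile, and then check that the reparametrization $d\tau=dt/F$ removes the friction term $K\gamma'$. By Theorem \ref{hkrop} and the derivation in Section \ref{sepre}, harmonicity of $G_\gamma$ for $g_{F,\kappa}$ is equivalent to \eqref{peule} with $\Lambda=\operatorname{diag}(\lambda_1,\lambda_2)$, where $(\ell_1,\ell_2)$ is the pair from Lemma \ref{gsep} for the data $(F,\kappa/F^2)$. This equivalence holds throughout $\Sph^3$, including at the axes. So it suffices to show that \eqref{peule} is equivalent to \eqref{nneu} for $\Gamma=\gamma\circ\tau^{-1}$. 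This makes sense because $\tau'=1/F>0$, so $\tau$ is a diffeomorphism of $I$ onto $\tau(I)$.

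\emph{Step 1: the potential.} Write $\Lambda=\lambda_1\,\mathrm{Id}+T\,\Pi_2$, where $\Pi_2$ is the orthogonal projection onto the second $\C$-factor. Since $|\gamma|=1$, the $\lambda_1$ contributions cancel in $-\Lambda\gamma+\langle\Lambda\gamma,\gamma\rangle\gamma$. What remains is
\[
-T\bigl(\Pi_2\gamma-|z_2|^2\gamma\bigr)=-T\operatorname{grad}_{\Sph^3}U(\gamma).
\]
This holds because $\Pi_2\gamma$ is the Euclidean gradient of $U$, and subtracting its normal component gives the spherical gradient.

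\emph{Step 2: the intrinsic acceleration.} For a curve on the unit sphere, the normal component of $\gamma''$ is $-|\gamma'|^2\gamma$. Hence $\gamma''+|\gamma'|^2\gamma=\nabla_t^{\Sph^3}\gamma'$. Substituting this and Step 1 into \eqref{peule}, the equation becomes
\[
\nabla_t\gamma'+\frac{F'}{F}\gamma'=T\operatorname{grad}U(\gamma).
\]
The equation is purely tangential, so no normal component is lost in this rewriting.

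\emph{Step 3: the time change.} Put $\gamma(t)=\Gamma(\tau(t))$. Then $\gamma'=F^{-1}\dot\Gamma$, and
\[
\nabla_t\gamma'=F^{-2}\nabla_\tau\dot\Gamma-F'F^{-2}\dot\Gamma.
\]
The second term cancels exactly against $(F'/F)\gamma'$. Multiplying through by $F^2$ gives $\nabla_\tau\dot\Gamma=F^2T\operatorname{grad}U=\kappa\operatorname{grad}U$, which is \eqref{nneu}. Every step is reversible because $F>0$ and $\tau$ is invertible, so the equivalence holds in both directions.

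None of these steps is a real obstacle. The only point needing care is Step 1, where the $\lambda_1$-terms must cancel exactly using the constraint $|\gamma|=1$. This cancellation is why only the difference $T=\lambda_2-\lambda_1$ survives, and why the condition $F^2T=\kappa$ is exactly what makes the system autonomous. Finally, the sign convention in \eqref{nneu} corresponds to the mechanical potential $-\kappa U$, as stated.
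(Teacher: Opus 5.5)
Your proposal is correct and follows essentially the same route as the paper. The paper's proof likewise notes that $\operatorname{grad}_{\Sph^3}V=T\operatorname{grad}_{\Sph^3}U$ (your Step 1) and that $\nabla_t^{\Sph^3}\gamma'+(F'/F)\gamma'=F^{-2}\nabla_\tau^{\Sph^3}\dot\Gamma$ (your Steps 2--3), then substitutes into \eqref{peule}; you simply write out the cancellations that the paper leaves to the reader.
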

\begin{proof}
Since $\lambda_2-\lambda_1=T$, we have $\operatorname{grad}_{\Sph^3}V=T\operatorname{grad}_{\Sph^3}U$ and $\nabla_t^{\Sph^3}\gamma'+(F'/F)\gamma'=F^{-2}\nabla_\tau^{\Sph^3}\dot\Gamma$.
Substitution into \eqref{peule} proves its equivalence with \eqref{nneu}.
\end{proof}

Using the conserved Neumann energy $H_N=\tfrac12|\dot\Gamma|^2-\kappa U(\Gamma)$, we obtain
\[
       |\gamma'|=1
        \quad\Longleftrightarrow\quad
       F^2=2H_N+\kappa|\Gamma_2|^2.
\]
Thus unit speed in the original parameter selects the corresponding energy relation among the Neumann trajectories.

\subsection{Constant magnitudes and rational phase closure}

For a $P$-periodic $F$, we continue to use the notation $I_F$ introduced in Theorem D.
The simplest periodic Neumann orbits are relative equilibria with constant profile magnitudes.
Their balance condition depends on the angular frequencies only through $\omega_1^2-\omega_2^2$, rather than on the carrier radius.
This independence allows one fixed metric to support a continuous family of closed profiles on different Hopf tori.
For constant $a,b>0$ with $a^2+b^2=1$, the relative equilibrium
\[
      \Gamma(\tau)
          =(ae^{i(\omega_1\tau+\theta_1)},be^{i(\omega_2\tau+\theta_2)}),
       \qquad \omega_1^2-\omega_2^2=\kappa,
\]
solves \eqref{nneu} by projection onto $T_\Gamma\Sph^3$.
Conversely, the phase laws force every harmonic profile with constant positive magnitudes to have this form after $d\tau=dt/F$, with $F^2T=\omega_1^2-\omega_2^2$.
Moreover, $F^2|\gamma'|^2=a^2\omega_1^2+b^2\omega_2^2$, so a nonstationary member has unit speed only if $F$ is constant.

\begin{corollary}[Periodic constant-magnitude families]\label{nper}
Let $F>0$ be $P$-periodic.
For integers $(r_1,r_2)\ne(0,0)$, set $\kappa=4\pi^2(r_1^2-r_2^2)/I_F^2$.
The frequencies $\omega_i=2\pi r_i/I_F$ then satisfy $\omega_1^2-\omega_2^2=\kappa$, which is the balance relation above.
For every $0<a<1$,
\[
     \Gamma_a(\tau)
      =\bigl(ae^{2\pi i r_1\tau/I_F},
     \sqrt{1-a^2}\,e^{2\pi i r_2\tau/I_F}\bigr)
\]
gives a harmonic product on the same metric $g_{F,\kappa}$ on $(\R/P\Z)\times M_1\times M_2$.
If $r_1r_2\ne0$ and $\gcd(|r_1|,|r_2|)=1$, these are embedded closed curves with coprime winding numbers $(r_1,r_2)$ and least $\tau$-period $I_F$.
Their length is
\begin{equation}\label{ntlen}
          \mathcal L(\Gamma_a)
      =2\pi\sqrt{a^2r_1^2+(1-a^2)r_2^2}.
\end{equation}
They are pairwise noncongruent when $|r_1|\ne|r_2|$.
\end{corollary}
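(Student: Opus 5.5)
The plan is to obtain everything from the time-changed Neumann reduction, Theorem \ref{ntime}, together with the relative-equilibrium description stated just before the corollary. First, by Lemma \ref{gsep}, the data $F$ and $T=\kappa/F^2$ determine a unique smooth positive warp pair. Because $F$ is $P$-periodic, both warps are $P$-periodic, so $g_{F,\kappa}$ descends to $(\R/P\Z)\times M_1\times M_2$.

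Next I would check the balance relation. With $\omega_i=2\pi r_i/I_F$ one has $\omega_1^2-\omega_2^2=4\pi^2(r_1^2-r_2^2)/I_F^2=\kappa$. The relative equilibrium $\Gamma_a$ therefore solves \eqref{nneu}, and this holds for every $0<a<1$ with the same $\kappa$. By Theorem \ref{ntime}, $\gamma_a=\Gamma_a\circ\tau$ is a harmonic profile for the single metric $g_{F,\kappa}$. To verify the equilibrium directly: $\ddot\Gamma=-(a^2\omega_1^2\cdots)$ componentwise is $(-\omega_1^2z_1,-\omega_2^2z_2)$. Its tangential projection equals $\kappa\operatorname{grad}U=\kappa\bigl((0,z_2)-|z_2|^2\Gamma\bigr)$ precisely because $-\omega_1^2+\omega_2^2=-\kappa$; the difference $-\omega_1^2\Gamma$ is normal.

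For closure, observe that $\tau(t+P)=\tau(t)+I_F$ since $F$ is periodic. Hence $\gamma_a(t+P)=\Gamma_a(\tau(t)+I_F)=\Gamma_a(\tau(t))$, because the phases advance by $2\pi r_i$. So $\gamma_a$ is $P$-periodic, and $G_{\gamma_a}$ is defined on the closed-base product.

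If $r_1r_2\ne0$ and $\gcd=1$, the curve $\Gamma_a$ is a $(r_1,r_2)$ torus knot on the Hopf torus of radius $a$. Its least period is $I_F$, since a smaller period $s$ would need $r_1s/I_F,r_2s/I_F\in\Z$, which forces $s\in I_F\Z$ by coprimality. Embeddedness follows the same way. The length formula \eqref{ntlen} is the computation $|\dot\Gamma_a|^2=a^2\omega_1^2+(1-a^2)\omega_2^2$, constant in $\tau$, multiplied by the period $I_F$.

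The main point to settle is the meaning of \emph{noncongruent}. I would interpret it as: not related by an isometry of $\Sph^3$ (an element of $O(4)$) composed with a time translation or reparametrization. The natural invariant is the length \eqref{ntlen}, since an ambient isometry preserves arc length. When $|r_1|\ne|r_2|$, the map $a\mapsto a^2r_1^2+(1-a^2)r_2^2$ is strictly monotone on $(0,1)$, so distinct $a$ give distinct lengths and the curves are pairwise noncongruent.

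If the intended congruence instead also includes the Neumann time parametrization, the same length invariant still applies. If it is meant only up to the $U(1)^2$ symmetry, the carrier radius $a=|z_1|$ already distinguishes the curves. Monotonicity fails exactly when $|r_1|=|r_2|$, which is why that case is excluded.
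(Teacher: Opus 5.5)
Your proposal is correct and takes essentially the same route as the paper's proof. The balance relation $\omega_1^2-\omega_2^2=\kappa$, together with Theorem \ref{ntime} and Lemma \ref{gsep}, makes every $\Gamma_a$ harmonic on the single periodic metric $g_{F,\kappa}$; coprimality of $(r_1,r_2)$ gives the least period $I_F$ and embeddedness; and strict monotonicity of $a\mapsto a^2r_1^2+(1-a^2)r_2^2$ when $|r_1|\ne|r_2|$ excludes congruence under round isometries and reparametrization. Your explicit checks, namely the tangential projection of $\ddot\Gamma_a=(-\omega_1^2z_1,-\omega_2^2z_2)$ and the identity $\tau(t+P)=\tau(t)+I_F$ giving closure on $\R/P\Z$, only fill in details the paper leaves implicit; do clean up the garbled expression $-(a^2\omega_1^2\cdots)$ in that computation.
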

\begin{proof}
The balance relation, combined with Lemma \ref{gsep}, proves harmonicity on the same periodic metric.
Two values $\tau,\widetilde\tau$ have the same profile point exactly when $r_i(\tau-\widetilde\tau)/I_F\in\mathbb Z$ for $i=1,2$.
We obtain the least period and embeddedness from coprimality.
The strictly varying length for $r_1^2\ne r_2^2$ excludes congruence under round isometries and reparametrization.
\end{proof}

\begin{proposition}[Carrier-independent energy and graph action]
In the setting of Corollary \ref{nper}, set $\omega_i=2\pi r_i/I_F$, let $G_a$ be the corresponding harmonic product, and let $\mathfrak c_a(t)=(t,\Gamma_a(\tau(t)))$ be its graph.
Independently of $a$,
\[
     |dG_a|^2
       =\lambda_1+\frac{\omega_1^2}{F^2}
       =\lambda_2+\frac{\omega_2^2}{F^2},
       \qquad
       \mathcal A_{\mathcal K}(\mathfrak c_a)
       =\frac12\int_0^P
          \left(F\lambda_1+\frac{\omega_1^2}{F}\right)dt
       =\frac12\int_0^P
          \left(F\lambda_2+\frac{\omega_2^2}{F}\right)dt.
\]
Thus either every $G_a$ is an eigenmap or none is, and the former occurs exactly when $F$ is constant.
The graph geodesics have the same $\mathcal K$-length, whereas their round profile lengths \eqref{ntlen} vary with $a$ when $|r_1|\ne|r_2|$.
\end{proposition}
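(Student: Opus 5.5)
The approach is to compute everything explicitly in the original time $t$, using $d\tau=dt/F$ and the relative-equilibrium form of $\Gamma_a$. With $\gamma(t)=\Gamma_a(\tau(t))$, the magnitudes are constant and $s_i'=\omega_i/F$. Hence
\[
|\gamma'|^2=\frac{a^2\omega_1^2+b^2\omega_2^2}{F^2}.
\]
Substituting into $E=|\gamma'|^2+\lambda_1a^2+\lambda_2b^2$ from the eigenmap criterion gives
\[
E=a^2\Bigl(\lambda_1+\frac{\omega_1^2}{F^2}\Bigr)+b^2\Bigl(\lambda_2+\frac{\omega_2^2}{F^2}\Bigr).
\]
The key identity is that the two brackets coincide. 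Indeed, their difference is
\[
\lambda_1-\lambda_2+\frac{\omega_1^2-\omega_2^2}{F^2}=-T+\frac{\kappa}{F^2}=0,
\]
since $T=\kappa/F^2$ in the class $g_{F,\kappa}$ and $\omega_1^2-\omega_2^2=\kappa$ by Corollary \ref{nper}. As $a^2+b^2=1$, we get $E=\lambda_1+\omega_1^2/F^2=\lambda_2+\omega_2^2/F^2$, and this expression contains no $a$. Alternatively, one can read the same identity off $A_1=-E$ in \eqref{phrm} with $a'=a''=0$, which gives $E=\lambda_1+(s_1')^2$ directly.

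For the action, evaluate $\mathcal K$ on the graph in the gauge $\xi=1$. This gives the integrand of \eqref{pener}, namely
\[
\tfrac12 F\bigl(|\gamma'|^2+\lambda_1a^2+\lambda_2b^2\bigr)=\tfrac12FE.
\]
Integrating over one base period $[0,P]$, and using the two forms of $E$ just obtained, yields the two displayed integrals. These are again independent of $a$. Since both warps are $P$-periodic and the profile closes after one base period ($\tau$-period $I_F$), this integral is the $\mathcal K$-length of the closed graph geodesic.

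For the eigenmap dichotomy, the eigenmap criterion says $G_a$ is an eigenmap iff $E$ is constant. Because $E$ does not depend on $a$, either every $G_a$ is an eigenmap or none is. To see that this happens exactly when $F$ is constant, I would argue as follows. If $F$ is constant, then $T$ is constant, so by Lemma \ref{gsep} both $\lambda_i$ are constant, and hence $E$ is constant. Conversely, suppose $E\equiv\mu$. Then
\[
\lambda_i=\mu-\frac{\omega_i^2}{F^2},
\]
and together with $F=\ell_1^{k_1}\ell_2^{k_2}$ and $\lambda_i=d_i/\ell_i^2$ this yields
\[
F=\prod_i\Bigl(\frac{d_i}{\mu-\omega_i^2/F^2}\Bigr)^{k_i/2}.
\]
The right-hand side is a strictly decreasing function of $F>0$ on the admissible range. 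Indeed, each factor $\mu-\omega_i^2/F^2$ is nondecreasing in $F$ and at least one is strictly increasing, because $(\omega_1,\omega_2)\neq0$. The left-hand side is strictly increasing, so the equation has at most one solution, and $F$ must be constant.

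I expect this converse to be the main obstacle: one has to handle the case where one $\omega_i$ vanishes and check that the monotonicity argument is still strict. It holds because the other factor still varies.

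Finally, the contrast with the round profile lengths. The graph geodesics have the common $\mathcal K$-length computed above. The round length of the profile is reparametrization-invariant, so it equals $\mathcal L(\Gamma_a)$ in \eqref{ntlen}. Since
\[
a^2r_1^2+(1-a^2)r_2^2=r_2^2+a^2(r_1^2-r_2^2),
\]
this length is strictly monotone in $a$ whenever $|r_1|\ne|r_2|$.
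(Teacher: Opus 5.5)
Your proposal is correct. For the energy identity, the action formula, the dichotomy and the length comparison, it follows the paper's proof essentially verbatim: compute $F^2|\gamma'|^2=a^2\omega_1^2+(1-a^2)\omega_2^2$, combine it with the balance relation $\omega_1^2-\omega_2^2=\kappa=F^2T$, then substitute into $\mathcal K$ at $\xi=1$.

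The one genuinely different step is the converse, that $E\equiv\mu$ forces $F$ to be constant.
\begin{itemize}
\item The paper sets $x=F^2$ and clears denominators in the volume identity. This gives the polynomial equation $(\mu x-\omega_1^2)^{k_1}(\mu x-\omega_2^2)^{k_2}=d_1^{k_1}d_2^{k_2}x^{k_1+k_2-1}$. A degree count (leading term $\mu^{k_1+k_2}x^{k_1+k_2}$ on the left, lower degree on the right) shows it is a nonzero polynomial. The paper then concludes from the discreteness of its positive roots and the continuity of $F$.
\item Your monotonicity argument instead shows that $F=\prod_i\bigl(d_i/(\mu-\omega_i^2/F^2)\bigr)^{k_i/2}$ has at most one solution on the admissible interval $F>\max_i|\omega_i|/\sqrt\mu$. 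That interval contains every value $F(t)$, since $\lambda_i>0$.
\end{itemize}
Your version gives a pointwise conclusion that uses neither continuity of $F$ nor integrality of the exponents $k_i$. The paper's version is a purely algebraic finiteness statement.

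The case you flagged as the main obstacle is not one. The left side $F$ is strictly increasing and the right side only needs to be nonincreasing, so uniqueness holds without invoking $(\omega_1,\omega_2)\ne0$.
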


\begin{proof}
Combining the identity $F^2|\gamma'|^2=a^2\omega_1^2+(1-a^2)\omega_2^2$ with the balance relation, we obtain the energy formula.
Substitution into \eqref{hkmet} yields the action formula.
If $F$ is constant, then $T$ is constant and Lemma \ref{gsep} makes both warps, hence both $\lambda_i$, constant, so the family consists of eigenmaps.
Conversely, suppose $|dG_a|^2=\mu>0$ is constant and set $x=F^2$.
Then $\lambda_i=\mu-\omega_i^2/x$, and the volume identity becomes
\[
       (\mu x-\omega_1^2)^{k_1}(\mu x-\omega_2^2)^{k_2}
       =d_1^{k_1}d_2^{k_2}x^{k_1+k_2-1}.
\]
This is a nonzero polynomial equation in $x$: its left side has degree $k_1+k_2$, whereas the right side has smaller degree.
Its positive roots are discrete, so continuity forces $x$, and therefore $F$, to be constant.
\end{proof}

Exactly one zero winding gives a singly spiral family.
More generally, a relative equilibrium closes on a finite cover precisely when $I_F\omega_i/(2\pi)\in\mathbb Q$ for both $i$.
For fixed $\kappa$, such a profile exists exactly when $\kappa I_F^2/(4\pi^2)\in\mathbb Q$, since every rational $q$ equals $((q+1)/2)^2-((q-1)/2)^2$.

\subsection{Axis-crossing profiles and unbounded degree}

The preceding profiles remain on a fixed Hopf torus and avoid the coordinate axes.
A second fixed-metric mechanism uses the invariant real great circle.
Its Neumann orbits cross both axes, and by choosing their energy we fit an arbitrary number of rotations into one source period.
For the identity inputs on $\Sph^p$ and $\Sph^q$, a real profile takes values in $\R^{p+1}\oplus\R^{q+1}$ and hence in the smaller sphere $\Sph^{p+q+1}$.
The equatorial inclusion into the larger complex sphere remains available, but it is not used in the degree calculation.

\begin{theorem}[Unbounded degree on one fixed metric]\label{ndeg}
Let $p,q\ge2$ be even, with real identity inputs $f_1=\operatorname{id}_{\Sph^p}$, $f_2=\operatorname{id}_{\Sph^q}$.
Fix a positive smooth $P$-periodic $F$ and $\kappa\in\R$.
Set $S_P^1=\R/P\Z$.
For every integer $m\ge1$, the single metric $g_{F,\kappa}$ carries a harmonic map
\begin{equation}\label{ndmap}
     \begin{split}
     G_m:S_P^1\times\Sph^p\times\Sph^q&\longrightarrow\Sph^{p+q+1},\\
      (t,x,y)&\longmapsto
       (\cos\Theta_m(\tau(t))x,\sin\Theta_m(\tau(t))y)
\end{split}
\end{equation}
of degree $4m$, where $\Theta_m(\tau+I_F)=\Theta_m(\tau)+2\pi m$.
The target is the smallest real sphere containing the image.
\end{theorem}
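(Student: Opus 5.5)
The plan is to look for $G_m$ among profiles on the invariant real great circle. Write $\Gamma=(\cos\Theta,\sin\Theta)$ with real components. Then $U(\Gamma)=\tfrac12\sin^2\Theta$, and the Neumann equation \eqref{nneu} of Theorem \ref{ntime} becomes the pendulum equation
\[
  \ddot\Theta=\tfrac{\kappa}{2}\sin 2\Theta ,
\]
with conserved energy $H=\tfrac12\dot\Theta^2-\tfrac{\kappa}{2}\sin^2\Theta$. By the axis-crossing proposition, a solution with real Cauchy data stays real, and \eqref{peule} is regular where a component vanishes. So Theorem \ref{ntime} applies to $\gamma=\Gamma\circ\tau$ directly, even though the profile crosses both axes. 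The inputs $\operatorname{id}_{\Sph^p}$ and $\operatorname{id}_{\Sph^q}$ are eigenmaps with $d_1=p$, $d_2=q$. Since the profile is real, $G_\gamma$ takes values in $\R^{p+1}\oplus\R^{q+1}$. Harmonicity into the equatorial real sphere $\Sph^{p+q+1}$ is equivalent to harmonicity into the larger sphere, because both amount to $\Delta G=-|dG|^2G$.

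The next step is period matching. For energies above $\max(0,-\kappa/2)\cdot$(the top of the potential), the orbit is a rotation: $\dot\Theta$ never vanishes and has constant sign. The time for one full turn is
\[
  \Pi(H)=\int_0^{2\pi}\frac{d\Theta}{\sqrt{2H+\kappa\sin^2\Theta}} .
\]
This is continuous and strictly decreasing in $H$, and it tends to $0$ as $H\to\infty$. If $\kappa\ne0$, it tends to $+\infty$ as $H$ decreases to the separatrix level, where the integrand acquires a nonintegrable singularity. If $\kappa=0$, then $\Pi=2\pi/\sqrt{2H}$ takes every positive value. By the intermediate value theorem there is an energy with $\Pi(H)=I_F/m$. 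The corresponding solution satisfies $\Theta_m(\tau+I_F)=\Theta_m(\tau)+2\pi m$. Since $F$ is $P$-periodic, $\tau(t+P)=\tau(t)+I_F$, so $\cos\Theta_m\circ\tau$ and $\sin\Theta_m\circ\tau$ are $P$-periodic. Hence \eqref{ndmap} is a well-defined harmonic map on $S^1_P\times\Sph^p\times\Sph^q$ with the fixed metric $g_{F,\kappa}$.

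The degree is computed by counting preimages of a regular value $w=(w_1,w_2)$ with $w_1,w_2\neq0$. A preimage requires $|\cos\Theta|=|w_1|$, after which $x=w_1/\cos\Theta$ and $y=w_2/\sin\Theta$ are forced. Along the $m$ turns there are exactly $4m$ such values of $\Theta$, one in each open quadrant per turn, and $\Theta\circ\tau$ is strictly monotone in $t$. So there are exactly $4m$ preimages, and at each one the differential is an isomorphism.

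The main obstacle is the orientation sign at each preimage. I would compare the preimages to the one in the first quadrant. Moving to another quadrant replaces $(x,y)$ by $(\pm x,\pm y)$ and flips the signs of $\cos\Theta$ and/or $\sin\Theta$. This amounts to composing with the antipodal map on one or both input spheres, which has degree $(-1)^{p+1}=-1$ on $\Sph^p$ and likewise on $\Sph^q$ because $p,q$ are even. It is also composed with the corresponding reflection on the target, which has determinant $(-1)^{p+1}=-1$ or $(-1)^{q+1}=-1$. These two signs cancel, while the factor $\operatorname{sign}\dot\Theta$ is the same everywhere. To verify this I would write the Jacobian in the block form $(\partial_t,T_x\Sph^p,T_y\Sph^q)\to(\text{radial-angle direction},\,T\Sph^p\text{-block},\,T\Sph^q\text{-block})$. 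Its determinant is $\pm\dot\Theta\,\tau'(t)\,\cos^p\Theta\,\sin^q\Theta$ times orientation signs, and $\cos^p\Theta\sin^q\Theta>0$ for even $p,q$. This would give $|\deg G_m|=4m$, and reversing the orientation of the base circle if necessary gives degree $4m$. Finally, nonzero degree forces surjectivity onto $\Sph^{p+q+1}$, so the image spans $\R^{p+1}\oplus\R^{q+1}$ and no smaller real sphere contains it.
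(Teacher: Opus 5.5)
Your proposal is correct. Up to the degree count it is the paper's proof: the Neumann system restricted to the invariant real circle, the rotation period $\int_0^{2\pi}du/\sqrt{2h+\kappa\sin^2u}$ decreasing from $+\infty$ to $0$ above $\max\{0,-\kappa/2\}$, and the energy with period $I_F/m$.

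The degree is where you differ. The paper integrates the pullback $G_m^*\omega_{p+q+1}=(-1)^p\dot\Theta_m\cos^p\Theta_m\sin^q\Theta_m\,d\tau\wedge\omega_p\wedge\omega_q$ and divides by the join formula $\operatorname{vol}\Sph^{p+q+1}=\operatorname{vol}\Sph^p\operatorname{vol}\Sph^q\int_0^{\pi/2}\cos^pu\sin^qu\,du$. Evenness enters there through $\int_0^{2\pi}\cos^pu\sin^qu\,du=4\int_0^{\pi/2}\cos^pu\sin^qu\,du$. You instead count the $4m$ preimages of a point with both blocks nonzero and check that the local signs agree. Your route needs no volume identity and shows clearly why evenness matters: for odd $p$ or $q$ the quadrant signs cancel. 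Both arguments still rest on the same pointwise Jacobian.

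Two things to tighten.

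\emph{The sign heuristic.} The antipodal/target-reflection argument does not do what you want. Composing with a target reflection relates preimages of $w$ to preimages of the reflected point at the same angle, so it never compares two quadrants. Write $g(\Theta,x,y)=(\cos\Theta\,x,\sin\Theta\,y)$. The correct symmetry is $g(\pi-\Theta,x,y)=g(\Theta,-x,y)$, so the antipodal sign on $\Sph^p$ is compensated by reversing the angle, not by a target reflection. The block Jacobian you propose settles the matter anyway. In positively oriented frames $(x,e_i)$ and $(y,f_j)$, with the target ordered $x$-block first, it equals $(-1)^p\dot\Theta\,\tau'\cos^p\Theta\sin^q\Theta$. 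The factor $(-1)^p$ comes from one fixed reordering, so it is the same at every point.

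\emph{The global sign.} No reversal of the base orientation is needed. The normalization $\Theta_m(\tau+I_F)=\Theta_m(\tau)+2\pi m$ already selects the increasing orbit, so the sign is $+$. The convention that actually matters is the target block order: swapping the blocks changes the orientation by $(-1)^{(p+1)(q+1)}=-1$.

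Your surjectivity argument for the minimality of the target sphere is a useful addition; the paper leaves that point implicit.
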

\begin{proof}
On the real profile circle, \eqref{nneu} becomes $\ddot\Theta=\kappa\sin\Theta\cos\Theta$, with $\dot\Theta^2=2h+\kappa\sin^2\Theta$.
For $h>h_*:=\max\{0,-\kappa/2\}$, its rotation period is
\[
     \mathcal P_\kappa(h)
        =\int_0^{2\pi}\frac{du}{\sqrt{2h+\kappa\sin^2u}}.
\]
Differentiating under the integral, we find $\mathcal P_\kappa'(h)<0$.
The period decreases from $+\infty$ to $0$.
For $\kappa\ne0$ the divergence at $h_*$ is logarithmic, while $\mathcal P_0(h)=2\pi/\sqrt{2h}$.
We can therefore choose a unique energy $h_m$ for which $\mathcal P_\kappa(h_m)=I_F/m$.
The corresponding increasing orbit defines \eqref{ndmap}, and Theorem \ref{ntime} shows that the map is harmonic.

Choose positively oriented frames $(x,e_1,\ldots,e_p)$ and $(y,f_1,\ldots,f_q)$ in $\mathbb R^{p+1}$ and $\mathbb R^{q+1}$.
After ordering the target coordinates by the $x$-block and then the $y$-block, we compute
$$
     G_m^*\omega_{p+q+1}
      =(-1)^p\dot\Theta_m\cos^p\Theta_m\sin^q\Theta_m
        \,d\tau\wedge\omega_p\wedge\omega_q.
$$
Since $d\tau=dt/F>0$, this agrees with the product orientation defined by increasing $t$ and the standard orientations of the two spheres.
Using the relation $\Theta_m(\tau+I_F)=\Theta_m(\tau)+2\pi m$, the spherical join volume formula, and the evenness of $p,q$, we obtain
$
         \deg G_m
          =\frac{m\int_0^{2\pi}\cos^pu\sin^qu\,du}
          {\int_0^{\pi/2}\cos^pu\sin^qu\,du}
     =4m.
$
\end{proof}

\begin{corollary}[Unbounded-degree eigenmaps on one metric]
Let $p,q\ge2$ be even, let $P>0$, and choose constants $\ell_1,\ell_2>0$ such that $p/\ell_1^2=q/\ell_2^2=:\lambda$.
On the fixed product manifold
\[
       \left(S_P^1\times\Sph^p\times\Sph^q,
       \,dt^2+\ell_1^2g_{\Sph^p}+\ell_2^2g_{\Sph^q}\right),
\]
the maps
\[
       \begin{gathered}
       \widehat G_m(t,x,y)
          =\left(\cos\frac{2\pi mt}{P}\,x,
          \sin\frac{2\pi mt}{P}\,y\right),
          \qquad m=1,2,\ldots,\\
       \Delta\widehat G_m
          =-\left(\lambda+\frac{4\pi^2m^2}{P^2}\right)\widehat G_m,
          \qquad \deg\widehat G_m=4m.
       \end{gathered}
\]
Thus one fixed Riemannian product metric supports spherical eigenmaps of arbitrarily large degree.
\end{corollary}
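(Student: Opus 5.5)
The plan is to exhibit $\widehat G_m$ as the constant-$F$ case of Theorem \ref{ndeg}, and then to check the eigenvalue directly. With the constant warps $\ell_1,\ell_2$ given, we have $F=\ell_1^p\ell_2^q$, which is constant, and $\lambda_1=p/\ell_1^2=q/\ell_2^2=\lambda_2=\lambda$. Hence $T=0$, so we are in the Neumann class with $\kappa=F^2T=0$. By Lemma \ref{gsep}, the pair $(F,0)$ recovers exactly these warps. Here $\tau=t/F$ and $I_F=P/F$.

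For $\kappa=0$, the real-circle Neumann equation from the proof of Theorem \ref{ndeg} reduces to $\ddot\Theta=0$. Its orbits are therefore uniform rotations, and the condition $\Theta_m(\tau+I_F)=\Theta_m(\tau)+2\pi m$ forces $\Theta_m(\tau)=2\pi m\tau/I_F=2\pi m t/P$. So $\widehat G_m$ coincides with the map $G_m$ of \eqref{ndmap}. Harmonicity and $\deg\widehat G_m=4m$ then follow from Theorem \ref{ndeg}, whose degree computation needs only $\dot\Theta_m>0$, the winding $2\pi m$, and the evenness of $p,q$.

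For the eigenvalue, I would use the eigenmap criterion, which gives $\Delta\widehat G_m=-E\,\widehat G_m$ with $E=|\gamma'|^2+\lambda_1\cos^2\Theta+\lambda_2\sin^2\Theta$. Here $|\gamma'|^2=(2\pi m/P)^2$ and $\lambda_1=\lambda_2=\lambda$, so $E=\lambda+4\pi^2m^2/P^2$, which is constant. Alternatively, one can verify this directly: $\Delta_g=\partial_t^2+\ell_1^{-2}\Delta_{\Sph^p}+\ell_2^{-2}\Delta_{\Sph^q}$ since $K=0$, and $\Delta_{\Sph^p}x=-px$. Applying this to $\cos(2\pi mt/P)\,x$ gives $-(4\pi^2m^2/P^2+p/\ell_1^2)\cos(2\pi mt/P)\,x$, and the $y$-block behaves the same way.

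No step is a real obstacle. The only point needing care is confirming that the orientation convention and the degree count in Theorem \ref{ndeg} apply verbatim when $\kappa=0$. This holds because the period formula there already treats $\mathcal P_0(h)=2\pi/\sqrt{2h}$.
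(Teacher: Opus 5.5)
Your proposal is correct and follows essentially the paper's own argument. The paper also checks directly that the two coordinate blocks are eigenfunctions of $\Delta_g$ with eigenvalue $\lambda+4\pi^2m^2/P^2$ whose squared norms sum to one, and it obtains $\deg\widehat G_m=4m$ by reading $\widehat G_m$ as the constant-$F$, $\kappa=0$ case of Theorem \ref{ndeg}; your explicit identification $\Theta_m(\tau)=2\pi m\tau/I_F=2\pi mt/P$ via $I_F=P/F$ (up to an irrelevant initial phase) simply spells out that specialization.
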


\begin{proof}
The two coordinate blocks are eigenfunctions with eigenvalue $\lambda+4\pi^2m^2/P^2$, and their squared norms sum to one.
This is the constant-$F$, $\kappa=0$ case of Theorem \ref{ndeg}, whose degree calculation gives $\deg\widehat G_m=4m$.
\end{proof}

\begin{corollary}[Closed geodesics for one conic metric]
In the setting of Theorem \ref{ndeg}, let $\mathcal K_{F,\kappa}$ be the conic metric of Theorem \ref{hkrop}, descended to $S_P^1\times\Sph^3$.
The real profiles in Theorem \ref{ndeg} give infinitely many mutually distinct positively oriented closed $\mathcal K_{F,\kappa}$-geodesics that are not iterates of shorter closed geodesics.
If $c_m$ denotes the graph associated with $\Theta_m$, their lengths satisfy
\[
       \mathcal L_{\mathcal K}(c_m)
       \ge \frac12\int_0^{I_F}\dot\Theta_m^2\,d\tau
       \ge \frac{2\pi^2m^2}{I_F},
\]
and hence tend to infinity.
\end{corollary}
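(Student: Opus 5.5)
The plan has three parts: show each graph $c_m$ is a closed geodesic of the descended metric, show it is not an iterate and that different $m$ give different geodesics, and bound its length from below.

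\emph{Closed geodesics.} By Theorem \ref{ntime}, the real profile $\gamma_m(t)=(\cos\Theta_m(\tau(t)),\sin\Theta_m(\tau(t)))$ solves \eqref{peule}. Theorem \ref{hkrop} then says its oriented graph is an unparametrized $\mathcal K_{F,\kappa}$-geodesic. Over one source period $t\in[0,P]$ we have $\tau$ running over $[0,I_F]$, so $\Theta_m$ advances by exactly $2\pi m$. Hence $\gamma_m$ is $P$-periodic, and by Remark \ref{hperi} the graph $c_m$ closes in $S^1_P\times\Sph^3$ with time winding one.

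\emph{Not iterates, and pairwise distinct.} Every positively oriented closed geodesic has positive integer time winding, because the cone forces $\xi>0$. If $c_m$ were a $k$-fold iterate of a shorter closed geodesic, its time winding $1$ would equal $k$ times a positive integer, so $k=1$. For distinctness, the homotopy class of $c_m$ in $\pi_1(S^1_P\times\Sph^3)=\Z$ carries no information, so I use the geometry of the profile instead. Along $c_m$ the angle $\Theta$ increases by $2\pi m$ per period. Alternatively, the energy levels $h_m$ are distinct because $\mathcal P_\kappa$ is strictly monotone. Either way, the image curves differ for different $m$. A single image could in principle be reparametrized, but a closed embedded curve in $S^1_P\times\Sph^3$ has a well-defined winding of its $\Sph^3$-projection around the real circle, so $m$ is an invariant of the image.

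\emph{Length bound.} In the gauge $\dot t=1$, the $\mathcal K$-length equals the reduced energy \eqref{pener} over $[0,P]$:
\[
\frac12\int_0^P F\bigl(|\gamma'|^2+2V\bigr)\,dt\ \ge\ \frac12\int_0^P F|\gamma'|^2\,dt ,
\]
using $V>0$. Since $\gamma'=\dot\Gamma/F$ and $dt=F\,d\tau$, we get
\[
\int_0^P F|\gamma'|^2\,dt=\int_0^{I_F}|\dot\Gamma|^2\,d\tau=\int_0^{I_F}\dot\Theta_m^2\,d\tau ,
\]
which gives the first inequality. For the second, Cauchy--Schwarz gives
\[
\Bigl(\int_0^{I_F}\dot\Theta_m\,d\tau\Bigr)^2=(2\pi m)^2\le I_F\int_0^{I_F}\dot\Theta_m^2\,d\tau ,
\]
so $\tfrac12\int_0^{I_F}\dot\Theta_m^2\,d\tau\ge 2\pi^2m^2/I_F$.

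Infinitely many distinct geodesics then follow from the lengths being unbounded, or directly from the distinctness above. The only delicate point is making the notion of ``distinct'' precise: the geodesics must differ as point sets rather than merely as parametrizations. The cleanest route is to rely on the fact that the length lower bound grows without bound, together with the projected winding $m$ being an invariant of the image.
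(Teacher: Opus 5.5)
Your proposal is correct and follows the paper's proof essentially step for step: closedness from Theorem~\ref{hkrop} and Remark~\ref{hperi}, non-iteration from the degree-one projection to $S^1_P$, distinctness from the winding number $m$ on the invariant real circle, and the length bound from $V>0$ followed by Cauchy--Schwarz with $\Theta_m(I_F)-\Theta_m(0)=2\pi m$. Beyond the paper, you spell out the substitution $\gamma'=\dot\Gamma/F$, $dt=F\,d\tau$, which turns the kinetic part of the graph action into $\tfrac12\int_0^{I_F}\dot\Theta_m^2\,d\tau$; the paper leaves this step implicit.
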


\begin{proof}
The graph of every periodic harmonic profile is a closed $\mathcal K_{F,\kappa}$-geodesic by Theorem \ref{hkrop} and Remark \ref{hperi}.
Its projection to $S_P^1$ has degree one, so it cannot be a nontrivial iterate of another closed geodesic.
The real profiles have different winding numbers $m$ on the invariant real profile circle and are therefore mutually distinct.
Since the potential $V$ is positive, the graph action is at least its kinetic part.
Cauchy--Schwarz, together with $\Theta_m(I_F)-\Theta_m(0)=2\pi m$, yields the stated lower bound.
\end{proof}

The real profile curves cross the axes, but both source warps stay positive and the domain remains a product.
They connect with Smith's harmonic joins and with classical equivariant harmonic-map reductions \cite{Sm75,BR92}.
Equatorial inclusion preserves harmonicity in a larger complex target, where the maps are null-homotopic.

\section{Prescribed-volume inverse reconstruction}\label{sevol}

In the second unit-speed inverse problem, we prescribe $F$ and reconstruct $T$.
We treat monotone and periodic volume factors separately.

\subsection{Scalar reconstruction and the monotone threshold}

Set $a=\cos s$, with $0<s<\pi/2$, and $u=a^2$.
The unit-speed condition then becomes
\begin{equation}\label{vscal}
       (s')^2=1-\frac{\varepsilon V_c(s)}{F^2},
       \qquad V_c(s)=\sec^2s+c^2\csc^2s,
\end{equation}
or
\begin{equation}\label{vquad}
          (u')^2=4\mathcal A(t,u),
          \qquad
          \mathcal A=u(1-u)-\frac{\varepsilon[1+(c^2-1)u]}{F^2}.
\end{equation}
For $c\ne0$, $V_c$ has minimum $(1+|c|)^2$ at $s_c=\arctan\sqrt{|c|}$, so pointwise feasibility is exactly
\begin{equation}\label{vflor}
        F\ge\sqrt\varepsilon(1+|c|).
\end{equation}
Since time reversal exchanges increasing and decreasing prescribed volume factors, we assume $F'>0$ in the monotone problem.
Under strict inequality, we denote the roots of $\varepsilon V_c(s)=F(t)^2$ by $L(t)<s_c<U(t)$.
They form the lower and upper boundaries of the admissible $s$-interval, with $L$ decreasing and $U$ increasing.
Equivalently, if $0<u_-<u_+<1$ are the two walls in the $u$-variable, then $\mathcal A(t,u)=(u_+-u)(u-u_-)$.
For $c=0$, take $s_c=0$.
The admissible interval is instead $0<s\le U(t)=\arccos(\sqrt\varepsilon/F(t))$ for $F(t)>\sqrt\varepsilon$, with $s=0$ excluded because $b=0$.

For fixed $(c,\varepsilon,\sigma)$, or equivalently for momenta $C_1=\sigma\sqrt\varepsilon$ and $C_2=cC_1$, Theorem \ref{grec} turns each smooth solution of \eqref{vscal} into a harmonic reconstruction that is unique up to constant phases.
Here $R=\varepsilon Q/(F^2a^2)>0$ and
\begin{equation}\label{vtrec}
        T=\frac1{a^2Q}-\frac{a+a''}{aR}.
\end{equation}
Away from turns, we use \eqref{gcmp} to obtain
\begin{equation}\label{vquot}
          T=-\frac{2F'}{Fu'}.
\end{equation}
At turns one uses \eqref{vtrec}.
If $(M_i,g_i)$ are complete, every entire reconstruction has complete source: $dt^2$ controls escape in time, and the warps are bounded away from zero on compact time intervals.

The global lower limit $F_-:=\lim_{t\to-\infty}F(t)=\inf_{\mathbb R}F(t)$ controls pointwise feasibility.

\begin{theorem}[Monotone threshold]\label{vmono}
Let $F>0$ be smooth on $\R$, with $F'>0$ and $F_->0$.
For fixed $c\in\R$ and $\varepsilon>0$, \eqref{vscal} has a smooth entire solution if and only if
\begin{equation}\label{vstrc}
         \sqrt\varepsilon(1+|c|)\le F_-.
\end{equation}
For $c\ne0$, there are unique entire solutions in the respective half-strips
$$
         s_c<s^U<U,
        \quad (s^U)'>0,
       \qquad
       L<s^L<s_c,
          \quad (s^L)'<0.
$$
Set $U_\pm=\lim_{t\to\pm\infty}U(t)$ and $L_\pm=\lim_{t\to\pm\infty}L(t)$.
The two branches satisfy
\[
       \lim_{t\to-\infty}s^U(t)=U_-,
       \qquad \lim_{t\to+\infty}s^U(t)=U_+,
       \qquad
       \lim_{t\to-\infty}s^L(t)=L_-,
       \qquad \lim_{t\to+\infty}s^L(t)=L_+.
\]
For $c=0$, there is exactly one entire solution $s^U$ with $0<s^U<U$ and $(s^U)'>0$, and its limits are $U_-$ and $U_+$.
If $F_+:=\lim_{t\to\infty}F(t)=\infty$, then $U_+=\pi/2$.
For $c\ne0$, also $L_+=0$.
\end{theorem}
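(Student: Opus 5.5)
Necessity is immediate from pointwise feasibility. A smooth entire solution of \eqref{vscal} requires $\varepsilon V_c(s(t))\le F(t)^2$ for every $t$. Since $V_c\ge(1+|c|)^2$ by \eqref{vflor}, we get $\sqrt\varepsilon(1+|c|)\le F(t)$ for all $t$. Letting $t\to-\infty$ gives \eqref{vstrc}, because $F$ is increasing and $F_-=\inf F$.

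For sufficiency with $c\ne0$, note first that if equality $\sqrt\varepsilon(1+|c|)=F_-$ holds, then $F(t)>F_-$ for every finite $t$ since $F'>0$. So the walls $L(t)<s_c<U(t)$ exist for all $t\in\R$. The upper wall $U$ is strictly increasing and $L$ is strictly decreasing, because $V_c$ is strictly monotone on each side of $s_c$ and $F^2$ increases.

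I would construct $s^U$ as a solution of the nonautonomous first-order equation $s'=\sqrt{1-\varepsilon V_c(s)/F^2}=:\Phi(t,s)$ in the region $s_c<s<U(t)$. Here $\Phi$ is continuous, vanishes on the wall $s=U(t)$, and is locally Lipschitz in the open region. The idea is a funnel/antifunnel argument.
\begin{itemize}
\item For each $n$, take the solution $s_n$ with $s_n(-n)=U(-n)$, i.e.\ starting on the wall. It immediately enters the region, since $U$ increases while $\Phi$ is small near the wall.
\item The wall is a strict upper fence: any solution with $s<U$ satisfies $s'\to0$ as it approaches $U$, whereas $U'>0$, so it cannot reach the wall.
\item The lower line $s=s_c$ is crossed upward only, since $s'>0$ there.
\item Hence each $s_n$ continues for all $t>-n$ inside $(s_c,U)$. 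The $s_n$ are ordered and monotone in $n$, so a diagonal limit $s^U=\lim s_n$ gives an entire solution trapped between $s_c$ and $U$.
\end{itemize}
Alternatively, I would run the backward flow from a point and show it tends to the wall at $-\infty$.

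\textbf{Limits.} Since $s^U$ is increasing and bounded, it has limits at $\pm\infty$. Along a limit, $s'\to0$ on average, which forces $\varepsilon V_c(s_\pm)=F_\pm^2$, i.e.\ $s_\pm=U_\pm$. At $-\infty$ the value $U_-$ is finite (possibly equal to $s_c$ in the threshold case).

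\textbf{Smoothness.} The graph stays strictly away from the wall on compact sets, so $s^U$ is smooth with $(s^U)'>0$. Smooth solutions of \eqref{vscal} can also turn at the wall. To exclude turning solutions in the uniqueness class, I would use the second-order equation \eqref{grad}: a solution touching the wall has $s'=0$ there, and monotonicity would be violated.

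\textbf{Uniqueness (the main obstacle).} Suppose two increasing entire solutions $s_1<s_2$ lie in the strip. Let $w=s_2-s_1$. Then $w'=\Phi(t,s_2)-\Phi(t,s_1)$, and near the upper wall $\partial_s\Phi\to-\infty$. Away from it, $\partial_s\Phi=-\varepsilon V_c'(s)/(2F^2\Phi)<0$ for $s>s_c$. Hence $w$ is strictly decreasing, and going backward $w$ must grow. But both solutions are confined to $(s_c,U(t))$, whose width tends to $U_--s_c$. To get a contradiction I would use a quantitative contraction rate as $t\to-\infty$: since $\Phi\to0$ along both solutions, $\partial_s\Phi\to-\infty$, so $\int_{-\infty}^0\partial_s\Phi\,dt=-\infty$ unless convergence is slow. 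I expect to verify this carefully, possibly through the $u$-variable form $\mathcal A=(u_+-u)(u-u_-)$ in \eqref{vquad}, where $\sqrt{\mathcal A}$ has a square-root singularity at the wall.

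The lower branch $s^L$ follows symmetrically: take $s'=-\Phi$ in $(L,s_c)$, where $L$ is decreasing. The case $c=0$ is the same with the lower boundary at $0$; $s=0$ is excluded because the solution is increasing and positive.

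Finally, if $F_+=\infty$, then the roots of $\varepsilon V_c=F^2$ tend to the poles of $V_c$. This gives $U_+=\pi/2$, and $L_+=0$ when $c\ne0$.
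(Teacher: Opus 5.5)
Your existence construction is a valid variant of the paper's. You start $s_n$ on the wall at $t=-n$ and take the decreasing limit in $n$; the paper starts at $s_c$ at $t=-j$ and extracts a subsequence of the $1$-Lipschitz family. Your fence argument, the identification of the limits at $\pm\infty$, and the $F_+=\infty$ conclusions agree with the paper.

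The genuine gap is uniqueness, which you leave open and pursue with an unnecessary, unproved estimate. You correctly show that $w=s_2-s_1>0$ is strictly decreasing. However, the strip width $U_--s_c$ gives no contradiction when $U_->s_c$, and your contraction-rate plan is not carried out. Moreover, the input you use for it, that $\Phi\to0$ along both solutions, already presupposes that both solutions tend to the wall value $U_-$ as $t\to-\infty$. Once that is known you are done: $w(\tau)\to0$ as $\tau\to-\infty$, and since $w$ is decreasing, $w(t)\le\lim_{\tau\to-\infty}w(\tau)=0$, contradicting $w>0$.

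The common backward limit follows from your own ``Limits'' step, applied to an arbitrary increasing entire solution in the half-strip rather than only to $s^U$. If its limit $s_-$ at $-\infty$ satisfied $s_c\le s_-<U_-$, then $s'=\Phi(t,s)\to(1-\varepsilon V_c(s_-)/F_-^2)^{1/2}>0$, which is incompatible with boundedness. This is exactly the paper's argument (``their common backward limit forces equality''). It needs no rate and no case distinction between $F_->\sqrt\varepsilon(1+|c|)$ and the threshold case, and it applies verbatim to $s^L$ and to $c=0$.

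A smaller inaccuracy: under the standing hypothesis $F'>0$, smooth solutions of \eqref{vscal} cannot turn. Differentiating \eqref{vscal} at a point where $s'=0$ gives $0=2\varepsilon V_c(s)F'/F^3$, which forces $F'=0$. The paper uses this one-line computation, rather than \eqref{grad}, for the case $c=0$. It combines it with the fact that a decreasing solution has increasing speed and reaches $s=0$ in finite time, and concludes that the increasing branch is the only entire solution at all. Your argument yields uniqueness only within the class $(s^U)'>0$.
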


\begin{proof}
Pointwise feasibility proves necessity.
Under \eqref{vstrc}, $F(t)>F_-$ at finite times, so $U'>0$ and, for $c\ne0$, $L'<0$.
Set $f(t,s)=(1-\varepsilon V_c(s)/F(t)^2)^{1/2}$.
We solve $s_j'=f(t,s_j)$ from $s_j(-j)=s_c$, using the smooth extension across $s=0$ when $c=0$.
A first contact with $U$ would give $(U-s_j)'=U'>0$ at a zero reached from above, which is impossible.
Thus $s_j$ exists for $t>-j$ with $0<s_j'\le1$.

By compactness, the bounded $1$-Lipschitz family has a locally uniform subsequential limit, and continuity of $f$ shows that the limit satisfies the integral equation.
Neither $s-s_c$ nor $U-s$ can vanish: its derivative would be nonzero at a minimum.
The limit is therefore smooth and strictly increasing.
For $c\ne0$, we repeat the construction with $s_j'=-f(t,s_j)$ and obtain the lower branch.

Each branch is monotone and bounded, so it has limits at both ends.
If a limit lay strictly inside the limiting admissible interval, then $|s'|$ would stay bounded below there, a contradiction.
Hence the upper branch tends to $U_\pm$ and the lower branch to $L_\pm$.
On each half-strip the relevant vector field is strictly decreasing in $s$, so the positive difference of two ordered entire solutions is nonincreasing.
Their common backward limit forces equality.
For $c=0$, differentiating \eqref{vscal} at a hypothetical turn would produce the contradiction $0=2\varepsilon V_0F'/F^3>0$.
A decreasing solution, whose speed increases as $s$ and $1/F$ decrease, reaches $s=0$ in finite time.
Thus the increasing branch is the only entire solution for $c=0$.
\end{proof}

\subsection{Finite and infinite volume-factor limits}

If $F_+<\infty$, both magnitudes approach positive limits and determine a nondegenerate limiting carrier torus, but the accumulation set still depends on the phases.
At least one limiting phase velocity is nonzero, so the profile cannot converge to a point.
Only when $F_+=\infty$ are we forced to have one magnitude vanish, in which case the limiting torus collapses to a coordinate circle.

\begin{remark}[Finite-limit phase geometry]
Assume that $a(t)\to a_*\in(0,1)$ and $(s_1',s_2')\to\omega_*=(\omega_{1,*},\omega_{2,*})$, with both limiting velocities nonzero.
If $\omega_{2,*}/\omega_{1,*}$ is irrational, the accumulation set of the profile curve is the whole limiting carrier torus.
Due to convergence of the magnitude and phase velocities, every sufficiently late segment of any fixed duration is uniformly close to the corresponding segment of the limiting linear flow.
We choose a sufficiently long limiting segment that is $\delta$-dense and then let $\delta\downarrow0$, which proves the claim.
If instead $\omega_*=\rho(r_1,r_2)$ for a coprime integer pair, set $\psi(t)=r_2s_1(t)-r_1s_2(t)\pmod{2\pi}$.
The $\omega$-limit set of the profile is the union of the closed $(r_1,r_2)$-orbits indexed by the limit set of $\psi$.
It is one closed orbit if $\psi$ converges, for example when $\int_T^\infty |r_2s_1'(t)-r_1s_2'(t)|\,dt<\infty$.
Thus a rational limiting slope alone is insufficient: without transverse phase locking, several parallel closed orbits, or even the whole torus, may occur.
Both the phase-locked rational case and the irrational case occur in Theorem \ref{grec} by prescribing an admissible magnitude with $a-a_*,a',a''=O(e^{-t})$.
The reconstructed volume factor and phase velocities then approach their limits with integrable errors.
\end{remark}

\begin{corollary}[Large-volume axis geometry]\label{vaxis}
Assume Theorem \ref{vmono} with $F(t)\to\infty$ as $t\to+\infty$.
Set $a_U=\cos s^U$, $b_U=\sin s^U$ and, when $c\ne0$, set $a_L=\cos s^L$, $b_L=\sin s^L$.
On the upper branch $a_U\to0$ and $b_U\to1$.
Its surviving phase is
\begin{equation}\label{vphup}
        s_2(t)=s_2(t_0)+\sigma c\sqrt\varepsilon
         \int_{t_0}^t\frac{d\xi}{F(\xi)b_U(\xi)^2}.
\end{equation}
For $c\ne0$, the lower branch has $a_L\to1$ and $b_L\to0$.
Its surviving phase is
\begin{equation}\label{vphdn}
         s_1(t)=s_1(t_0)+\sigma\sqrt\varepsilon
      \int_{t_0}^t\frac{d\xi}{F(\xi)a_L(\xi)^2}.
\end{equation}
On either branch with nonzero surviving momentum, $\int_{t_0}^{\infty}F(t)^{-1}\,dt<\infty$ gives convergence to one point of the limiting coordinate circle, whereas divergence of this integral gives that whole circle as the accumulation set of the profile curve.
When $c=0$, the upper branch always converges to a point since $s_2'=0$.
For $c\ne0$ the two branches have the same prescribed $F$ and momentum data but different reconstructed source metrics, with $T_U>0$ and $T_L<0$.
\end{corollary}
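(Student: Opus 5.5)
The proof has three parts: the magnitude limits, the phase formulas, and the dichotomy on accumulation, followed by the signs of $T$.

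\emph{Magnitude limits.} Since $F_+=\infty$, Theorem \ref{vmono} gives $s^U\to U_+=\pi/2$, hence $a_U\to0$ and $b_U\to1$. For $c\ne0$ it also gives $s^L\to L_+=0$, hence $a_L\to1$ and $b_L\to0$.

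\emph{Phase formulas.} These follow by integrating \eqref{gpha}. On the upper branch, $s_2'=\sigma c\sqrt\varepsilon/(Fb_U^2)$, which is \eqref{vphup}. On the lower branch, $s_1'=\sigma\sqrt\varepsilon/(Fa_L^2)$, which is \eqref{vphdn}. The phase of the vanishing magnitude is irrelevant to the accumulation set, because that component of the profile tends to $0$.

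\emph{Point versus circle.} Since $b_U\to1$ (respectively $a_L\to1$), the integrand of the surviving phase is comparable to $1/F$ for large $t$. Suppose $\int^\infty F^{-1}\,dt<\infty$. Then the surviving phase converges to a limit $\phi_\infty$. The profile therefore converges to the single point $(0,e^{i\phi_\infty})$, respectively $(e^{i\phi_\infty},0)$, of the limiting coordinate circle.

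Suppose instead the integral diverges and the momentum is nonzero. The integrand has a fixed sign and $F\to\infty$, so the surviving phase is monotone, tends to $\pm\infty$, and has derivative tending to $0$. Its values modulo $2\pi$ therefore sweep the circle infinitely often. Combined with the vanishing of the other magnitude, the accumulation set is the whole coordinate circle.

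When $c=0$ we have $C_2=0$, so $s_2'=0$ and $s_2$ is constant. Together with $a_U\to0$, this gives convergence to a point.

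\emph{Signs of $T$.} Away from turns, \eqref{vquot} gives $T=-2F'/(Fu')$ with $u=\cos^2s$, so $u'=-2\cos s\sin s\,s'$. Both branches are strictly monotone, so there are no turns. On the upper branch $s'>0$, so $u'<0$; with $F'>0$ this gives $T_U>0$. On the lower branch $s'<0$, so $u'>0$ and $T_L<0$. The momenta $(\sigma\sqrt\varepsilon,\sigma c\sqrt\varepsilon)$ and the function $F$ are the same on both branches. Since $T$ has opposite signs, Lemma \ref{gsep} shows that the reconstructed warp pairs, and hence the source metrics, differ.

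\emph{Main obstacle.} The delicate point is the circle case. One must confirm that the surviving phase diverges monotonically, which needs $b_U^2$, respectively $a_L^2$, bounded away from $0$ for large $t$. This is guaranteed by the limits above. With that in hand, density of the accumulation set in the circle follows directly.
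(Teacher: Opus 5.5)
Your proposal is correct and follows the paper's proof essentially step for step: the magnitude limits come from Theorem \ref{vmono}, the phase formulas from integrating \eqref{gpha}, the point-versus-circle dichotomy from comparing the surviving phase integrand with $1/F$ (using $b_U^2,a_L^2\to1$), and the signs of $T$ from \eqref{vquot} with $u_U'<0<u_L'$. A minor remark: the two source metrics differ directly because $T=d_2/\ell_2^2-d_1/\ell_1^2$ is determined by the warps, so you do not need Lemma \ref{gsep} at that step.
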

\begin{proof}
Theorem \ref{vmono} provides the magnitude limits, while \eqref{gpha} provides the phase formulas.
Since $b_U^2,a_L^2\to1$, the phase integrals in \eqref{vphup} and \eqref{vphdn} converge exactly when $\int_{t_0}^{\infty}F^{-1}\,dt$ does.
Otherwise its phase is strictly monotone and unbounded, and visits every angle arbitrarily late while the other profile component tends to zero.
Finally, for $u_U=a_U^2$ and $u_L=a_L^2$, one has $u_U'<0<u_L'$.
Applying \eqref{vquot}, we obtain the asserted signs of $T$.
\end{proof}

The three end regimes are summarized in Figure \ref{fvol}.

We can realize the two large-volume outcomes with the same lower bound by taking $A>\sqrt\varepsilon(1+|c|)$ and $c\ne0$.
The functions $F_{\rm fast}(t)=A+e^t$ and $F_{\rm slow}(t)=A+\sqrt{\log(1+e^t)}$ are smooth, strictly increasing and tend to infinity.
Their reciprocals are respectively integrable and nonintegrable, so they realize the two alternatives of Corollary \ref{vaxis}.

The exact and asymptotic finite-height regimes also arise directly from the prescribed-magnitude reconstruction.
For $c=\varepsilon=1$, set
$
       F[a]=\frac1{a\sqrt{1-a^2-(a')^2}}.
$
The $C^\infty$-flat magnitude $a_{\rm flat}$ in Section \ref{seglb} is nonconstant before its joining time and constant afterward, so it reaches a fixed carrier torus smoothly and follows a closed or dense orbit according to whether its constant phase-slope ratio is rational or irrational.
If instead we take $a(t)=a_*+\eta e^{-rt}$, with $0<a_*<1$ and small $\eta,r>0$, we obtain $F[a]\to F_\infty<\infty$, exponential phase convergence, and the alternatives H and I.

In G--I, the colored arcs remain off the limiting torus and approach its gray limiting motion; phase locking in H gives one closed limiting orbit, whereas the irrational limiting slope in I gives the whole torus.
Black and blue match each volume graph with its spatial panels, and dashed curve segments lie behind the pale carrier torus.

\begin{figure}[H]
\centering
\includegraphics[width=.95\linewidth,height=.82\textheight,keepaspectratio]{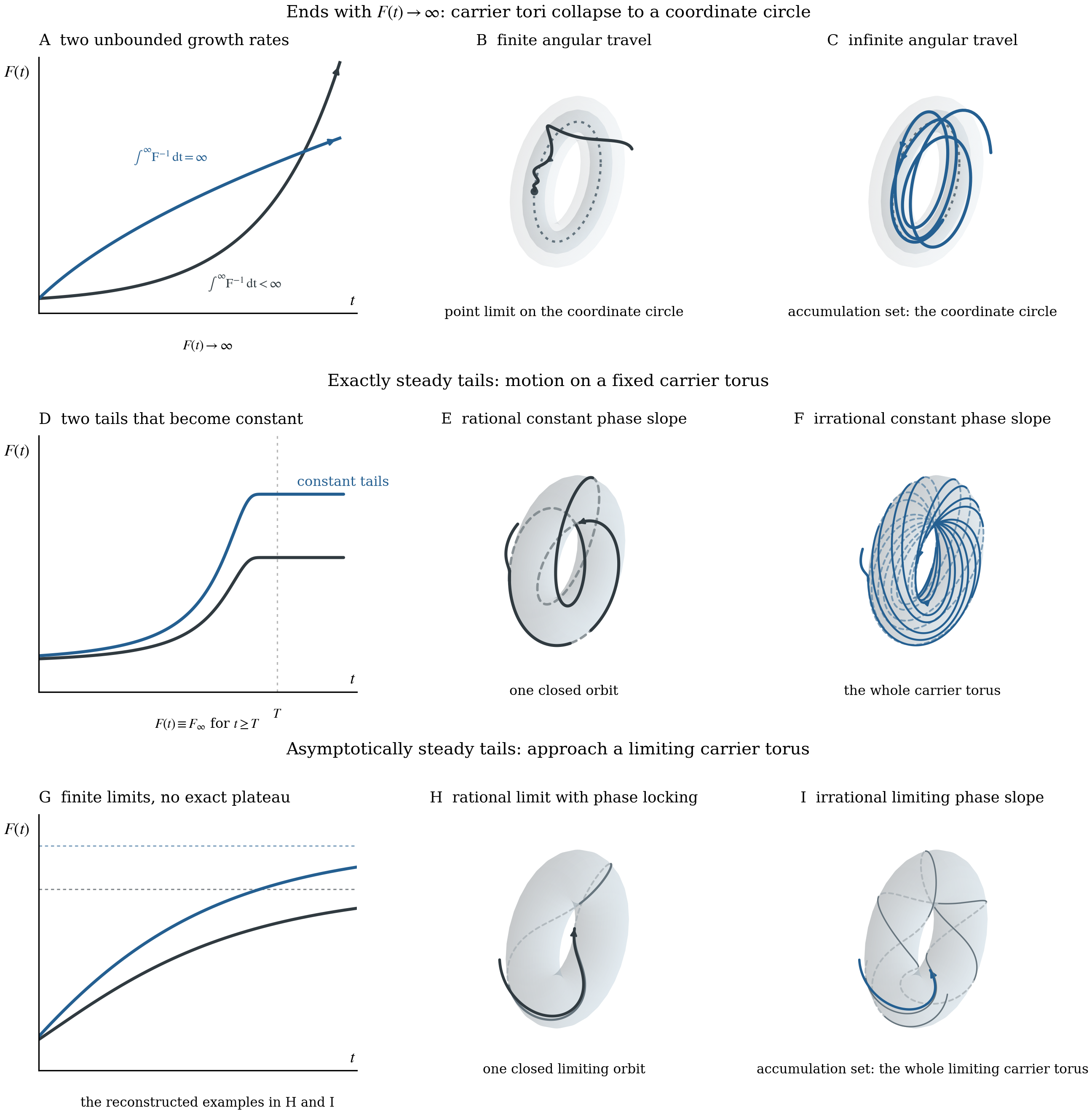}
\caption{Unbounded-volume ends (A--C), exact joins to steady tails (D--F), and asymptotically steady finite-height ends (G--I).}
\label{fvol}
\end{figure}

\subsection{Periodic volumes}

For a nonconstant periodic magnitude, we must separate scalar return from profile closure.
The magnitude first turns at the moving walls and returns after one period.
We then apply Proposition \ref{ccrit}: the profile closes on a finite cover precisely when both phase increments are rational multiples of $2\pi$.
A smooth turn can occur only when the magnitude meets a moving wall at a critical time of $F$.
We establish the required local analytic continuation in Appendix \ref{apvol}.

\begin{theorem}[A periodic magnitude for a prescribed volume]\label{vwallcap}
Let $F>0$ be analytic and $2H$-periodic, where $0<H<\pi/2$.
Assume it is even about $0$ and $H$, with $F'<0$ on $(0,H)$ and $F''(0)<0<F''(H)$.
Set $M=F(0)$, $m=F(H)$, and $h=(1-m^2/F^2)^{1/2}$.
If
\begin{equation}\label{vwcond}
          1-\frac{m^2}{M^2}+\frac{m^2F''(0)}{M^3}>0,
     \qquad h(0)>2\int_0^Hh(t)\,dt,
\end{equation}
then for some $\alpha\in(0,m)$ the equation
\begin{equation}\label{vweq}
         (s')^2=1-\frac{\alpha^2}{F^2\sin^2(2s)}
\end{equation}
has a nonconstant analytic $2H$-periodic solution in $(0,\pi/2)$.
Thus \eqref{vscal} with $c=1$ and $\varepsilon=\alpha^2/4$ has a periodic magnitude and positive analytic periodic warps.
\end{theorem}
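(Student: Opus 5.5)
The plan is a symmetric shooting argument in the parameter $\alpha$. By the evenness of $F$ about $0$ and $H$, it suffices to construct an analytic solution $s$ of \eqref{vweq} on $[0,H]$ that touches a moving wall $\sin(2s)=\alpha/F$ at both $t=0$ and $t=H$, with $s'(0)=s'(H)=0$. Reflecting by $s(-t)=s(t)$ and $s(H+\tau)=s(H-\tau)$ then gives a $2H$-periodic solution. This matches the remark before the theorem: smooth turns occur only where the magnitude meets a wall at a critical time of $F$, and here the critical times are $0$ and $H$. Because $F$ is maximal at $0$, the admissible band $\{\sin 2s\ge\alpha/F\}$ is widest at $t=0$ and narrowest at $t=H$, and it is nonempty for every $t$ exactly when $\alpha\le m$.

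\emph{Step 1 (start at the wall).} For each $\alpha\in(0,m)$, I start at the lower wall point $s_0(\alpha)=\tfrac12\arcsin(\alpha/M)$ at $t=0$. I invoke Appendix \ref{apvol} to get a unique analytic solution through this point that is even in $t$ and enters the admissible region for $t>0$. Locally it has the form $s=s_0+\beta t^2+\dots$, where $\beta$ is fixed by differentiating $(s')^2=1-\alpha^2/(F^2\sin^22s)$ twice at $t=0$. The first inequality in \eqref{vwcond} is exactly what I expect to need for $\beta>0$ uniformly for $\alpha$ up to $m$. Indeed, at $\alpha\to m$ it reads $1-m^2/M^2+m^2F''(0)/M^3>0$, and this is the condition that the solution leaving the wall separates from it, since the wall itself moves with acceleration governed by $F''(0)<0$. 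The companion condition $F''(H)>0$ plays the same role at $t=H$.

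\emph{Step 2 (mismatch function).} Away from the walls, $s'=+\sqrt{\cdot}$ is analytic, so $s_\alpha$ continues strictly increasing on $(0,H]$ as long as it stays inside the band. I then define a continuous mismatch $\Phi(\alpha)$: the signed gap at $t=H$ between $s_\alpha(H)$ and the upper wall $U_\alpha(H)=\tfrac{\pi}{2}-\tfrac12\arcsin(\alpha/m)$. If the trajectory hits the upper wall at some earlier time $t_*<H$, I set $\Phi=0$ there; in that case the solution cannot continue smoothly, because $t_*$ is not a critical time of $F$. The goal is a sign change of $\Phi$ on $(0,m)$, followed by the intermediate value theorem. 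At a zero of $\Phi$, Appendix \ref{apvol} applied at $t=H$ shows that the solution reaching the wall at $H$ coincides with the even continuation about $H$, so the reflections glue analytically.

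\emph{Step 3 (endpoint signs), the main obstacle.} As $\alpha\to0$ the speed tends to $1$ and the band tends to $(0,\pi/2)$. Since $H<\pi/2$, the trajectory falls short of the upper wall, which gives one sign of $\Phi$. The other endpoint is where the second condition in \eqref{vwcond} must enter, and this is the step I am least sure of. At $\alpha=m$ the maximal speed is at most $h(t)$, and the upper wall at $H$ collapses to $\pi/4$. Using $h(0)=\cos(2s_0(m))$, the required travel is $\tfrac{\pi}{4}-s_0(m)=\tfrac12\arccos(m/M)\ge h(0)/2>\int_0^H h\,dt$, so the solution also falls short there. A plain intermediate value argument in the lower-to-upper configuration therefore does not close.

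What I would try first is to reinterpret the second inequality as a comparison between the travel $\int_0^H s'\,dt$ and the relative wall motion, and to run the shooting for the complementary configuration: the trajectory leaves the upper wall at $t=0$ and is caught by the (descending) upper wall at $t=H$. In this configuration, $h(0)>2\int_0^H h\,dt$ should force the wall to overtake the slow trajectory for $\alpha$ near $m$. Near $\alpha=0$, the wall barely moves while the trajectory moves with speed nearly $1$, which should give the opposite sign. I would also recheck which wall is reached for each sign of $s'$ before committing.

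\emph{Step 4 (conclusion).} Once a periodic $s$ is found, set $a=\cos s$, $c=1$, $\varepsilon=\alpha^2/4$, so that $\varepsilon V_1(s)=\alpha^2/\sin^2(2s)$. The reconstruction then proceeds as in Theorem \ref{grec}. The identity $T=-2F'/(Fu')$ from \eqref{vquot} holds away from the turns, and at the turns $t\in H\mathbb Z$ I use \eqref{vtrec}, which stays analytic because $R=\varepsilon Q/(F^2a^2)>0$. Lemma \ref{gsep} then gives positive analytic $2H$-periodic warps.
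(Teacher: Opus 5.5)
Your reduction to a wall-to-wall arc on $[0,H]$, the reflection step, your Step 4, and your endpoint estimate are all sound. The argument breaks in Steps 2--3, starting with how you read that estimate. At $\alpha=m$ the band at $t=H$ is the single point $\pi/4$. The wall you \emph{started} from, $\tfrac12\arcsin(m/F(t))$, rises on $(0,H)$ because $F'<0$, and it reaches $\pi/4$ exactly at $t=H$. Your bound $\tfrac12\arccos(m/M)\ge h(0)/2>\int_0^Hh\,dt$ shows the trajectory cannot reach $\pi/4$ by time $H$. Since $s<\pi/4\le U_m(t)$ throughout, it never meets the upper wall either. So it does not ``fall short'' inside the band: the trailing wall overtakes it at some $t<H$, and it stops being a solution. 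For small $\alpha$ it instead survives strictly inside the band on $(0,H]$. The two endpoints therefore do behave oppositely, provided you track contact with \emph{either} wall; your $\Phi$ sees only the leading wall. Your ``complementary configuration'' is not a new family. Equation \eqref{vweq} is invariant under $s\mapsto\pi/2-s$, which maps it onto your Step 1 family together with the trailing-wall contact you overlooked. That is the configuration the paper uses, via $z=-\cos 2s$, $A_\alpha=1-\alpha^2/F^2$, $(z')^2=4(A_\alpha-z^2)$.

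Even after this correction, an intermediate-value argument on $\Phi$ is the wrong tool. You set $\Phi=0$ at an early contact, but an early contact at a noncritical time of $F$ is a failure, not a solution, so a zero of $\Phi$ need not yield anything. What is missing is a first-exit argument:
\begin{enumerate}
\item Strict survival on $(0,H]$ is open in $\alpha$. Let $\alpha_*$ be the endpoint of the survival interval issuing from $0$.
\item The bounds $|z|\le1$, $|z'|\le2$ give a limit solving the integral equation. It agrees with the selected germ near $t=0$ only if that germ depends analytically on $\alpha$.
\item For this you must take the fast root of the turning quadratic, so that the Briot--Bouquet coefficient lies in $[-1,0)$. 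The first inequality of \eqref{vwcond} ensures $D_\alpha=1-\alpha^2/M^2+\alpha^2F''(0)/M^3>0$, i.e.\ that this jet is real for all $\alpha\le m$. That is its actual role, not positivity of $\beta$.
\item For the limit, the gap $\mathscr D=A_{\alpha_*}-z^2\ge0$ cannot vanish at any $t\in(0,H)$. There $z'=0$, so $\mathscr D'=A_{\alpha_*}'<0$, which is impossible at a minimum.
\item The second inequality of \eqref{vwcond} gives $\alpha_*<m$: if $\alpha_*=m$, then $z(H)=0$ and $h(0)=|z(H)-z(0)|\le\int_0^H|z'|\,dt\le2\int_0^Hh\,dt$.
\item Openness then forces zero gap exactly at $t=H$.
\end{enumerate}
Only then does Lemma \ref{vwterm}, which applies to whichever wall is reached, give the even analytic continuation through $H$ that your reflection needs.
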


\begin{proof}
We prove the result by shooting, terminal matching, and reflection in Appendix \ref{apvol}.
\end{proof}

For example, the hypotheses hold for $F(t)=1+\tfrac92(1+\cos(3\pi t))$ and $H=1/3$.
The theorem produces a periodic magnitude, but profile closure still requires rational phase return.
The following family realizes both conditions.

\begin{proposition}[An explicit periodic closing family]
For every $\nu>0$ there is an analytic family, parametrized by $0<\delta<\pi/4$, with periodic magnitude, positive periodic warps, and equal phase increments $\Theta(\delta)$.
The parameters for which the profile closes on a finite cover are dense.
There is a sequence $\delta_j\downarrow0$ whose least closing multipliers tend to infinity.
\end{proposition}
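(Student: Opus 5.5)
The plan is to build the family from Theorem \ref{grec} with $c=1$ and a magnitude having a half-period symmetry that swaps $a$ and $b$. With $c=1$ we have $Q\equiv1$. Write $a=\cos s$ and $b=\sin s$, and take
\[
     s_\delta(t)=\frac\pi4+\delta\sin\frac{\nu t}{1+\nu\delta},
     \qquad 0<\delta<\frac\pi4,
\]
so that $s_\delta$ is analytic in $(t,\delta)$ and has period $P_\delta=2\pi(1+\nu\delta)/\nu$. Then $0<s_\delta<\pi/2$ and $|s_\delta'|\le\nu\delta/(1+\nu\delta)<1$. Since $a'=-\sin s\,s'$, we get $R=\sin^2 s\,(1-(s')^2)>0$, so $a$ is admissible. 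Theorem \ref{grec}, with fixed $\varepsilon>0$ and $\sigma=1$, gives $F=\sqrt{\varepsilon/(a^2R)}$ and $T$ from \eqref{greg}. Both are analytic and $P_\delta$-periodic in $t$ and analytic in $\delta$. Lemma \ref{gsep}, together with the analytic implicit-function theorem (the defining function there is analytic with nonzero derivative in $x$), then gives positive analytic periodic warps. Proposition \ref{ccrit} gives periodic phase velocities.

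Next I would prove that the two increments are equal. The key identity is $s_\delta(t+P_\delta/2)=\pi/2-s_\delta(t)$, which gives $a(t+P_\delta/2)=b(t)$ and $b(t+P_\delta/2)=a(t)$. Also $a'^2$ is invariant under this shift, since $|a'|=\sin s\,|s'|$ and $|b'|=\cos s\,|s'|$ are interchanged. Together with $Q\equiv1$, this makes $a^2R=a^2(1-a^2-a'^2)=a^2(b^2-a'^2)$ shift-invariant. Hence $F$ has period $P_\delta/2$. Substituting $t\mapsto t+P_\delta/2$ in $\int_0^{P_\delta}\sqrt\varepsilon\,dt/(Fb^2)$ turns it into $\int_0^{P_\delta}\sqrt\varepsilon\,dt/(Fa^2)$. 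By \eqref{gpha} this means $\Delta_1=\Delta_2=:\Theta(\delta)$, and $\Theta$ is analytic on $(0,\pi/4)$. Proposition \ref{ccrit} then says the profile closes on an $mP_\delta$-cover exactly when $m\Theta(\delta)\in2\pi\Z$, that is, when $\Theta(\delta)\in2\pi\mathbb Q$.

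For density, it suffices that $\Theta$ is nonconstant. Its critical points are then isolated, so $\Theta$ is a local homeomorphism off a discrete set, and preimages of the dense set $2\pi\mathbb Q$ are dense. I expect this nonconstancy to be the main obstacle; I would get it from the two ends of the parameter interval.

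\emph{The end $\delta\to0$.} Here $a^2\to1/2$ and $R\to1/2$, so $F\to2\sqrt\varepsilon$ and the integrand $\sqrt\varepsilon/(Fa^2)\to1$. Hence $\Theta(\delta)\to2\pi/\nu$.

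\emph{The end $\delta\to\pi/4$.} I would show $\Theta\to+\infty$. The integrand is $\sqrt{\varepsilon}/(Fa^2)=\sqrt{R}/a$. Near a time where $s$ attains its maximum $\pi/4+\delta$, the factor $a=\cos s$ becomes small, while $R\ge a^2\sin^2 s\,(1-(s')^2)$ stays bounded below by a positive constant. As $\delta\to\pi/4$ the minimum of $a$ tends to $0$ quadratically in $t$. Then $\int dt/a$ behaves like $\int dt/(\eta+t^2)\sim\pi/\sqrt\eta\to\infty$. If a direct lower bound is messy, I would instead expand $\Theta$ to order $\delta^2$ near $\delta=0$.

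Finally, the growing multipliers follow the argument of Corollary \ref{pdiv}. Choose $\delta_j\downarrow0$ with $\Theta(\delta_j)\in2\pi\mathbb Q$ and $\Theta(\delta_j)\ne2\pi/\nu$, possible by density and local monotonicity near $0$. Then $\Theta(\delta_j)\to2\pi/\nu$. Suppose the reduced denominators of $\Theta(\delta_j)/2\pi$ were bounded along a subsequence. Rationals with bounded denominators near a fixed number are eventually equal to it, which would force $\Theta(\delta_j)=2\pi/\nu$ and contradict the choice of $\delta_j$. So the least closing multipliers tend to infinity.
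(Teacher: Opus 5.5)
Your proposal is correct and follows the paper's construction. The paper also takes $c=1$, a magnitude $s=\pi/4+\delta\sin(\cdot)$ symmetric about $\pi/4$, uses the half-period shift exchanging $a$ and $b$ to make $F$ half-periodic and the two increments equal, and gets density from a nonconstant analytic $\Theta$ via Proposition~\ref{ccrit}. The differences are minor: the paper reads $\nu$ as the momentum, $\varepsilon=\nu^2$, with unit frequency, which your family contains since you keep $\varepsilon$ free (and the increments do not depend on $\varepsilon$ anyway, as $s_1'=\sqrt R/a$); it gets nonconstancy from the expansion $\Theta(\delta)=2\pi(1+\tfrac34\delta^2+O(\delta^4))$ rather than from your blow-up as $\delta\to\pi/4$; and it simply chooses $\Theta(\delta_j)/(2\pi)=1+1/j$, whose reduced denominator is $j$, in place of your bounded-denominator argument.
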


\begin{proof}
Set $c=1$, $\varepsilon=\nu^2$, $\sigma=1$, and
$
         \theta_\delta=\frac\pi4+\delta\sin t,
          \, a_\delta=\sin\theta_\delta,
        \,
       F_\delta=
      \frac{2\nu}{\cos(2\delta\sin t)\sqrt{1-\delta^2\cos^2t}}.
$
Here $R=\cos^2\theta_\delta(1-\delta^2\cos^2t)>0$, and hence \eqref{greg} produces an entire reconstruction with positive periodic warps.
The volume is $\pi$-periodic, while the magnitude has period $2\pi$.
Translation by $\pi$ interchanges the two phase derivatives, whose increments over $2\pi$ have the common value
$
     \Theta(\delta)=\int_0^{2\pi}
      \frac{\sqrt{1-\delta^2\cos^2t}}{\cos(2\delta\sin t)}\,dt.
$
The function $\Theta$ is even and analytic for $|\delta|<\pi/4$, with $\Theta(\delta)=2\pi(1+\tfrac34\delta^2+O(\delta^4))$.
It follows that the parameters for which $\Theta(\delta)/(2\pi)$ is rational are dense, and each such parameter closes the profile on a finite cover.
For large $j$, we choose $\Theta(\delta_j)/(2\pi)=1+1/j$.
Then $\delta_j\downarrow0$, while the least number of $2\pi$-periods required for closure tends to infinity.
\end{proof}

\begin{figure}[H]
\centering
\includegraphics[width=.85\linewidth]{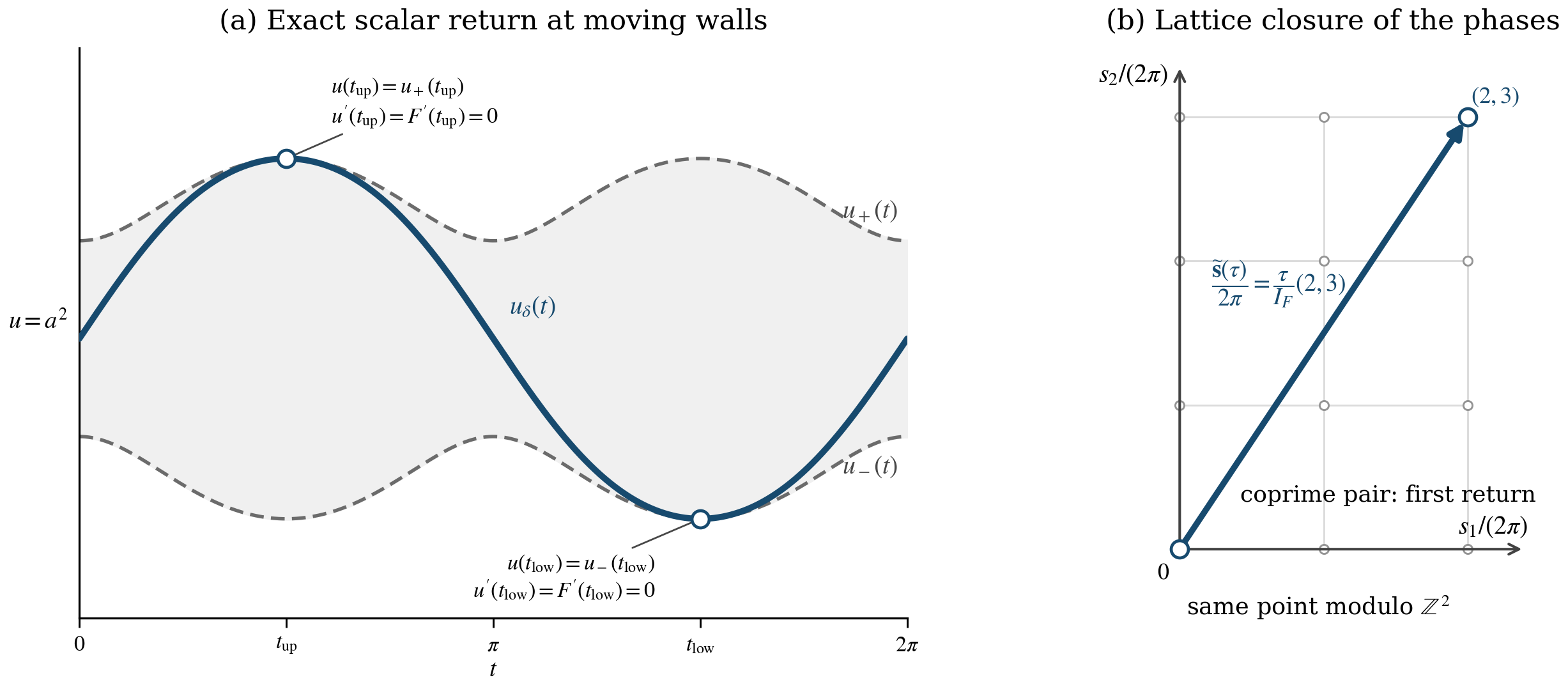}
\caption{The two conditions for periodic closure: return of the magnitude between its moving walls and rational return of the lifted phases.}
\label{fgeom}
\end{figure}

Figure \ref{fgeom} separates the two closure conditions.
The magnitude first returns between the moving walls $u_-$ and $u_+$.
The lifted phases must then have rational displacement; an integral displacement closes on the base period, while a nonintegral rational displacement closes on a finite cover.

The obstruction in Proposition \ref{vobcritprop} shows that pointwise feasibility does not guarantee a global periodic reconstruction.
For example, it excludes $(c,\varepsilon)=(1,1)$ for the positive periodic volume $F(t)=2.0001+1.9999\sin^2(4\pi t)$.
\section{Concluding remarks}

We have separated inverse reconstruction, where the source metric may vary, from fixed-source-metric closure, where an autonomous Neumann reduction selects closed profiles on one source.
The conic graph metric provides their common variational layer.
In the fixed-metric sector, we use relative equilibria to obtain continuous closed families and real rotations to obtain maps of unbounded degree.
For suitably matched constant warps, these maps are eigenmaps.
It remains to characterize closed profiles for general fixed warp pairs beyond the sector $F^2T=\mathrm{constant}$.
Outside this autonomous sector, the profile equation is nonautonomous and should admit further closure mechanisms.

\appendix

\section{Finite-regularity extensions}

For locally bounded positive warps with locally bounded reciprocals, we call $G$ \emph{weakly harmonic} if
\begin{equation}\label{gweak}
     \int\langle dG,dV\rangle_g\,d\mathrm{vol}_g=0
\end{equation}
for every compactly supported tangent variation $V$ along $G$.
The volume density and inverse metric are locally bounded, so this definition applies even when the source metric is only measurable.

\begin{proposition}[Nonsmooth inverse data]
Fix $(c,\varepsilon,\sigma)$ as in Theorem \ref{grec}.
\begin{enumerate}
\item If $a\in C^{1,1}_{\mathrm{loc}}(I,(0,1))$ and $R=1-a^2-(a')^2>0$, equations \eqref{greg}--\eqref{gpha}, with $a''$ taken almost everywhere, define a locally uniformly elliptic measurable source metric and a unit-speed weakly harmonic product.
Here $F\in W^{1,\infty}_{\mathrm{loc}}$ and $T\in L^\infty_{\mathrm{loc}}$.
If $a\in C^{2,\alpha}_{\mathrm{loc}}$ for $0<\alpha<1$, the source metric is $C^{0,\alpha}_{\mathrm{loc}}$.
\item If the prescribed first warp $\ell_1>0$ is continuous on $\R$ and $c\ne0$, the global existence and uniqueness of Theorem \ref{gwar} still hold, with $a\in C^2$, $F\in C^1$, and a positive continuous second warp.
The product is weakly harmonic.
If $\ell_1$ is locally Lipschitz, so is the source metric.
\item If $F\in C^1(\R,(0,\infty))$, $F'>0$, and $F_->0$, the threshold and the selected entire branches of Theorem \ref{vmono} remain valid.
Their reconstructed metrics are continuous, and the products are weakly harmonic.
If $F\to\infty$, the limit alternatives of Corollary \ref{vaxis} hold as well.
\end{enumerate}
\end{proposition}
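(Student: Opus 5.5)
The plan treats the three parts separately. Each time I redo the smooth argument at the weaker regularity, and I replace the pointwise harmonic map equation by the weak formulation \eqref{gweak}. The bridge between the two is a reduction lemma, which I prove first. For locally bounded warps, with $F$ locally Lipschitz and $T\in L^\infty_{\rm loc}$, the product $G_\gamma$ with $\gamma\in C^{1}$, $\gamma'$ locally absolutely continuous, is weakly harmonic if and only if two conditions hold almost everywhere: the momenta $C_i=F\operatorname{Im}(\overline z_iz_i')$ are constant, and the real equation $A_1=B_1$ holds. To prove the lemma, I split a test variation $V$ along $G$ into its components. The first kind are products of $\phi(t)$ with tangent fields coming from the inputs $f_i$; these are handled by the eigenmap equations of the $f_i$, integrated over $M_i$. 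The second kind are variations of $\gamma$ itself inside $\Sph^3$. Fubini and integration of the $M_i$-directions against $d\mathrm{vol}_{g_i}$ reduce \eqref{gweak} to the weak Euler--Lagrange equation of the reduced energy \eqref{pener}. That equation is \eqref{peule} in divergence form, $(F\gamma')'+F(\cdots)\gamma=0$, read in $W^{1,1}_{\rm loc}$. Its tangential and phase parts are exactly \eqref{ppha} and \eqref{phrm}, holding a.e. The one delicate point is that the input variations produce the terms $d_i/\ell_i^2$ with only measurable $\ell_i$. This is harmless, because these terms enter algebraically and are never differentiated.

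For part (1), I take $a\in C^{1,1}_{\rm loc}$. Then $R$ is locally Lipschitz and bounded below on compacts, so $F=\sqrt{\varepsilon Q/(a^2R)}\in W^{1,\infty}_{\rm loc}$. Since $a''\in L^\infty_{\rm loc}$, we get $T\in L^\infty_{\rm loc}$. Lemma \ref{gsep} produces the warps as a smooth function of $(F,T)$, so they are locally bounded and locally bounded away from zero; this is the local uniform ellipticity. The phases come from integrating \eqref{gpha}, and the products $F a^2 s_1'$ and $F b^2 s_2'$ are constant by construction. The computations \eqref{gder} and \eqref{grad}, together with $|\gamma'|=1$ and $A_1+E=0$, are pointwise algebraic identities. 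They hold a.e. because the chain rule is valid for Lipschitz functions, so the reduction lemma gives weak harmonicity. If $a\in C^{2,\alpha}$, then $F\in C^{1,\alpha}$ and $T\in C^{0,\alpha}$, and smoothness of the map in Lemma \ref{gsep} transfers the Hölder regularity to the metric.

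For part (2), $\ell_1$ is only continuous, so the right-hand side of \eqref{gvec} is continuous in $t$ and smooth in $(a,p)$. Carathéodory/Picard theory therefore still gives a unique maximal $C^1$ solution $(a,p)$, which means $a\in C^2$. The identity $F'/F=-Tap$ uses only differentiation in $(a,p)$, so it still holds. The Gronwall bound via \eqref{gvolw} uses only boundedness of $\ell_1^{\pm1}$ on compacts. The compactness argument in $\mathcal D$ and the uniqueness argument then go through verbatim, with $F\in C^1$ and a positive continuous $\ell_2$. Weak harmonicity follows from part (1), since $a\in C^2\subset C^{1,1}_{\rm loc}$. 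If $\ell_1$ is locally Lipschitz, I will upgrade the regularity by a bootstrap: $p'$ is then locally Lipschitz, so $a\in C^{2,1}$, and hence $T$ and $F$ are Lipschitz.

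For part (3), I rerun the proof of Theorem \ref{vmono} with $F\in C^1$. The walls $U$ and $L$ are $C^1$ by the implicit function theorem, since $V_c'\ne0$ away from $s_c$ and $F'>0$. The field $f(t,s)$ is continuous, and it is locally Lipschitz in $s$ away from the walls. The barrier, compactness and monotone-difference arguments never used more than this. The resulting branches $s$ are $C^1$ and satisfy $(s')^2=1-\varepsilon V_c/F^2$. Since the right-hand side is $C^1$ and $s'$ is bounded away from zero, $s'$ is $C^1$, so $a\in C^2$. The reconstruction formula \eqref{vquot} then gives continuous $T$, Lemma \ref{gsep} gives continuous warps, and part (1) gives weak harmonicity. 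Corollary \ref{vaxis} uses only limits and the phase formulas, so it transfers unchanged.

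I expect the main obstacle to be the reduction lemma with only measurable warps. I have to check carefully that the splitting of test variations and the Fubini step remain valid at this regularity, and that the weak form gives back precisely the a.e. relations \eqref{ppha} and $A_1=B_1$.
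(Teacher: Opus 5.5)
Your proposal is correct and follows essentially the paper's own argument. You re-verify the identities \eqref{gder}--\eqref{grad} almost everywhere (or with $a\in C^2$ in parts (2) and (3)), and use Lemma \ref{gsep} to get locally bounded warps with bounded reciprocals. You then rerun the continuation argument via \eqref{gvolw}, and the barrier and comparison arguments of Theorem \ref{vmono}, using only continuity of $\ell_1$, respectively $F\in C^1$, and you obtain weak harmonicity by integrating by parts in $t$ and along the eigenmap inputs.

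The only difference is packaging. The paper needs no converse direction and no splitting of test fields: it derives \eqref{peule} almost everywhere and integrates by parts directly against an arbitrary tangent variation $V$, in $t$ at fixed $(x,y)$ and on $M_i$ at fixed $t$. If you keep your reduction lemma, split $V$ pointwise rather than restricting to product fields $\phi(t)W$. Take its component $(w_1f_1,w_2f_2)$ in $\C f_1\oplus\C f_2$, which is a tangent variation of $\gamma$ for each fixed $(x,y)$, and the orthogonal remainder, which contributes nothing.
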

\begin{proof}
In (1), $a'$ is locally Lipschitz, so $F$ is locally Lipschitz and $T$ is locally bounded.
We apply Lemma \ref{gsep} to obtain positive warps with locally bounded reciprocals.
The identities \eqref{gder}--\eqref{grad} hold almost everywhere and, combined with the conserved phases, yield \eqref{peule} almost everywhere.
Integrating by parts in $t$ and along the smooth eigenmap inputs proves \eqref{gweak} for arbitrary target variations, with the reconstructed metric held fixed.
The same formulas establish the stated regularity.
For (2), the vector field \eqref{gvec} is continuous in time and locally Lipschitz in its state.
The compact-interval bounds from \eqref{gvolw} require only upper and lower bounds on $\ell_1$, so we can repeat the continuation proof.
In (3), the wall barriers and comparison from Theorem \ref{vmono} involve only $F$ and $F'$.
Each selected solution stays strictly between its walls and is therefore $C^2$, while \eqref{vtrec} determines a continuous $T$.
In both (2) and (3), the profile equation verifies \eqref{gweak}.
\end{proof}

\section{Periodic reconstruction and nondegenerate closed data}\label{apcl}

The arguments in Section \ref{seper} require two periodic existence results.
We first construct small-momentum magnitude branches for an arbitrary prescribed periodic warp and control their parameter dependence.
We then produce a nondegenerate periodic Pinney datum for the uniformly positive-warp branch and the fixed-return deformation.

\subsection{Small-momentum reconstruction}

Let $L>0$ be smooth and $P$-periodic, set $k=k_2$, and define
\[
       y_0=d_2^{-k/(2k+2)}L^{-k_1/(k+1)},\qquad q=(2+2/k)y_0^{-2},\qquad
          S=\int_0^P\sqrt q\,dt,\qquad \delta_n=\frac{S}{2\pi(n+\tfrac12)}.
\]

\begin{lemma}[Periodic spectral estimate]\label{pspc}
Set $m=y_0^{-2/k}$ and $\mathcal L_\delta h=\delta^2m^{-1}(mh')'+qh$.
For all large $n$, $\mathcal L_{\delta_n}:H^2_{\rm per}\to L^2_{\rm per}$ is invertible, with
$$
        \|\mathcal L_{\delta_n}^{-1}f\|_{X_{\delta_n}}
         \le C\delta_n^{-1}\|f\|_2,
       \qquad
         \|h\|_{X_\delta}=\|h\|_2+\delta\|h'\|_2+\delta^2\|h''\|_2,
$$
where $C$ is independent of $n$.
\end{lemma}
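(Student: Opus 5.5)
The plan is to treat $\mathcal L_\delta$ as a semiclassical Hill operator with a strictly positive potential $q$. We build a uniform Liouville--Green (WKB) fundamental system, obtain the periodic Green function from variation of parameters, and use the choice $S/\delta_n=2\pi(n+\tfrac12)$ to keep the monodromy uniformly away from the resonance $1$. First we remove the first-order term. Writing $\mathcal L_\delta h=\delta^2h''+\delta^2(m'/m)h'+qh$ and setting $h=m^{-1/2}w$ turns the equation $\mathcal L_\delta h=f$ into $\delta^2w''+(q+\delta^2\beta)w=m^{1/2}f$, where $\beta$ is a fixed smooth periodic function built from $m$. Next introduce the phase $\phi(t)=\int_0^t\sqrt{q}$, so that $\phi(t+P)=\phi(t)+S$. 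The standard Liouville--Green construction, using $q\ge q_0>0$ and smoothness, gives solutions
\[
  w_\pm(t)=q^{-1/4}e^{\pm i\phi(t)/\delta}\bigl(1+\eta_\pm(t,\delta)\bigr),
  \qquad |\eta_\pm|+\delta|\eta_\pm'|\le C\delta
\]
on $[0,P]$, with $C$ independent of $\delta$. The error bound comes from the Volterra integral equation for $\eta_\pm$, whose kernel is $O(\delta)$ after one integration by parts. The Wronskian is $W(w_+,w_-)=-2i\delta^{-1}(1+O(\delta))$.

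Second, we examine the monodromy. Because the coefficients are $P$-periodic, $w_\pm(t+P)$ are again solutions. In the basis $(w_+,w_-)$ the monodromy matrix is
\[
  M_\delta=\operatorname{diag}\bigl(e^{iS/\delta},e^{-iS/\delta}\bigr)+O(\delta).
\]
At $\delta=\delta_n$ we have $e^{\pm iS/\delta_n}=-1$, so $M_{\delta_n}=-I+O(\delta_n)$ and $\det(I-M_{\delta_n})=4+O(\delta_n)$, which is bounded away from $0$ for large $n$. The homogeneous equation therefore has no nonzero periodic solution, and $\mathcal L_{\delta_n}$ is injective. Since $\mathcal L_{\delta_n}$ is Fredholm of index $0$ on $H^2_{\rm per}\to L^2_{\rm per}$ (it is a compact perturbation of an invertible operator), it is invertible.

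Third, we construct the periodic Green function explicitly. The free Green kernel is $g(t,s)=\bigl(w_+(t)w_-(s)-w_-(t)w_+(s)\bigr)/\bigl(\delta^2W\bigr)$ for $s<t$. The Wronskian contributes $\delta^2|W|\sim\delta$, so $|g|\le C\delta^{-1}$ and $|\partial_tg|\le C\delta^{-2}$. The periodic solution is the particular solution plus the correction $(I-M)^{-1}$ applied to the one-period defect. Since $\|(I-M_{\delta_n})^{-1}\|\le C$, the periodic kernel $G_n$ obeys the same bounds. This gives $\|h\|_\infty\le C\delta_n^{-1}\|f\|_1$ and $\delta_n\|h'\|_\infty\le C\delta_n^{-1}\|f\|_1$, hence $\|h\|_2+\delta_n\|h'\|_2\le C\delta_n^{-1}\|f\|_2$ on the bounded period interval. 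Finally we read the second derivative off the equation:
\[
  \delta^2h''=f-qh-\delta^2(m'/m)h',
\]
so $\delta_n^2\|h''\|_2\le\|f\|_2+C\|h\|_2+C\delta_n\|h'\|_2\le C\delta_n^{-1}\|f\|_2$. This completes the $X_{\delta_n}$ estimate.

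The main obstacle is the uniformity of the WKB errors and of the monodromy perturbation. We must show that the $O(\delta)$ remainders in $\eta_\pm$, and therefore in $M_\delta$, come with constants independent of $n$. The first tool we would try is the standard Olver error bound for Liouville--Green approximations, $|\eta_\pm|\le\exp\bigl(\delta\,\mathcal V(\psi)\bigr)-1$, where $\mathcal V(\psi)$ is the variation of the smooth error-control function $\psi$ determined by $q$ and $\beta$. This bound is uniform because $q$ is bounded below and all coefficients are smooth and periodic. Once it is in place, the resonance avoidance at $\delta_n$ and the explicit Green-function bounds follow directly.
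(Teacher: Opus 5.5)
Your proposal is correct, but it reaches the estimate by a different route from the paper.

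\textbf{The paper's route.} The paper never constructs solutions. It writes $\mathcal L_\delta=q(I-\delta^2\mathscr A)$ with $\mathscr A h=-(mq)^{-1}(mh')'$, which is self-adjoint and nonnegative on periodic $L^2(mq\,dt)$. The unitary Liouville transformation $s=\int_0^t\sqrt q$, $z=m^{1/2}q^{1/4}h$ conjugates $\mathscr A$ to $-\partial_s^2+V$ on a circle of length $S$, with $V$ bounded. Min--max then places the eigenvalues within $\|V\|_\infty$ of $(2\pi j/S)^2$. Since $\delta_n^2(2\pi j/S)^2=(j/(n+\frac12))^2$, the point $1$ lies between the scaled levels $j=n$ and $j=n+1$, at distance $\gtrsim\delta_n$ from $\delta_n^2\operatorname{spec}\mathscr A$. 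The spectral theorem gives $\|h\|_2\le C\delta_n^{-1}\|f\|_2$, and the equation plus interpolation handle $\delta_n\|h'\|_2$ and $\delta_n^2\|h''\|_2$.

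\textbf{How your route differs.} In your argument the same resonance avoidance appears as the antiperiodicity $e^{\pm iS/\delta_n}=-1$ of the WKB monodromy. The factor $\delta_n^{-1}$ comes from the size $\delta^2W\sim\delta$ of the Wronskian, not from a spectral gap.

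\textbf{What each buys.} The paper's argument is softer: it needs only self-adjointness and min--max, with no uniform Liouville--Green error theory. Yours needs Olver-type bounds. It also has one bookkeeping point: expand the one-period defect directly in the $(w_+,w_-)$ coefficients, where it is $O(\delta^{-1}\|f\|_1)$. Cauchy data and these coefficients are related through the scaling $\operatorname{diag}(1,\delta^{-1})$, so the uniform bound on $(I-M_{\delta_n})^{-1}$ must be applied in that basis. In exchange, your construction gives more than the lemma states, namely the pointwise bounds $\|h\|_\infty+\delta_n\|h'\|_\infty\le C\delta_n^{-1}\|f\|_1$. These are sharper than what the paper later extracts from the $X_\delta$ norm by scaled Sobolev embedding in the proof of Theorem \ref{pall}.
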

\begin{proof}
On periodic $L^2(mq\,dt)$, let $\mathscr A h=-(mq)^{-1}(mh')'$.
The unitary Liouville transformation $s(t)=\int_0^t\sqrt q\,dt$, $z=m^{1/2}q^{1/4}h$ sends $\mathscr A$ to $-\partial_s^2+V$ on the circle of length $S$, with bounded $V$.
By comparing its spectrum with the free spectrum $\{(2\pi j/S)^2:j\in\mathbb Z\}$ through the min--max principle, we obtain $\operatorname{dist}(1,\delta_n^2\operatorname{spec}\mathscr A)\ge c\delta_n$.
Since $\mathcal L_{\delta_n}=q(I-\delta_n^2\mathscr A)$, the spectral theorem provides the $L^2$ bound, while the equation and interpolation control the two derivative terms.
\end{proof}

\begin{theorem}[Periodic reconstruction]\label{pall}
For every $c_*\in\mathbb R$ and all large $n$, there is a smooth local family $(c,\delta)\mapsto y\in H^2_{\rm per}$ near $(c_*,\delta_n)$ giving admissible magnitudes $a=\delta y$ whose reconstruction has $\ell_1=L$ and $\varepsilon=\delta^{2(k+1)}$.
At the base points,
\begin{equation}\label{pasy}
     \frac{a_n}{\delta_n}\longrightarrow y_0\quad\hbox{in }C^1,
      \qquad
         \frac{\ell_{2,n}}{\delta_n}\longrightarrow\sqrt{d_2}\,y_0,
          \qquad
       \frac{F_n}{\delta_n^k}\longrightarrow y_0^{-1},
\end{equation}
the last two limits being uniform, and
\begin{equation}\label{pfbd}
          \|\partial_\delta y\|_{X_{\delta_n}}=O(1),
      \qquad
     \|\partial_c y\|_{X_{\delta_n}}=O(\delta_n).
\end{equation}
Every resulting magnitude is smooth.
The parameter neighborhoods may shrink with $n$.
\end{theorem}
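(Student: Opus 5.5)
The plan is to rescale the one-warp magnitude system \eqref{gvec} with $\ell_1=L$ by $a=\delta y$ and $\varepsilon=\delta^{2(k+1)}$, where $k=k_2$. This turns the problem into a singularly perturbed periodic equation whose leading part is algebraic in $y$. I would then solve it near the algebraic root $y_0$ by a quasi-Newton (contraction) argument, using Lemma \ref{pspc} as the linear input.

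\emph{The rescaled equation.} With $Q=1+(c^2-1)\delta^2y^2$ and $R=1-\delta^2(y^2+y'^2)$, the formulas of Theorem \ref{gwar} give
\[
F=\delta^k y^{-1}\sqrt{Q/R},\qquad L_2=\delta\,(F\delta^{-k}L^{-k_1})^{1/k},\qquad T=\delta^{-2}d_2\,(yL^{k_1})^{2/k}(R/Q)^{-1/k}-d_1L^{-2}.
\]
Multiplying the $p'$-equation of \eqref{gvec} by $\delta$ yields
\[
\delta^2y''=\frac{R}{Q\,y}-d_2L^{2k_1/k}\,y^{1+2/k}\,R\,(R/Q)^{-1/k}+O(\delta^2).
\]
Its leading part vanishes exactly at $y_0=d_2^{-k/(2k+2)}L^{-k_1/(k+1)}$, and its linearization there is $\delta^2h''+qh$ with $q=(2+2/k)y_0^{-2}$. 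Before setting up the iteration I would check that a natural choice of unknown (for instance a weighted variable $z=\phi(y)$, or keeping the divergence structure coming from $F'/F=-Taa'$) produces precisely the operator $\mathcal L_\delta h=\delta^2m^{-1}(mh')'+qh$ with $m=y_0^{-2/k}$. Any remaining first-order discrepancy is $O(\delta^2)$ and can be absorbed as a perturbation, since the inverse bound of Lemma \ref{pspc} loses only $\delta^{-1}$.

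\emph{Nonlinear solution.} I would first build an approximate solution $Y=y_0+\delta^2y_1$, where $y_1$ solves the algebraic equation $q\,y_1=(\text{the }O(\delta^2)\text{ source terms at }y_0)$; this makes the residual $O(\delta^4)$ in $L^2$. I then write $y=Y+h$ and solve $\mathcal L_{\delta}h=-\mathrm{Res}(Y)-N(h)$ by contraction in a ball $\|h\|_{X_\delta}\le C\delta^{3}$. The key estimate is the interpolation bound $\|h\|_\infty^2\lesssim\|h\|_2\|h'\|_2+\|h\|_2^2\le C\delta^{-1}\|h\|_{X_\delta}^2$. Together with the $\delta^2y''$ structure, it gives $\|N(h)\|_2\lesssim\delta^{-1}\|h\|_{X_\delta}^2$, and $\delta^2$-weighted derivative terms are also controlled by $\|h\|_{X_\delta}$. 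Lemma \ref{pspc} then gives a map of size
\[
C\delta^{-1}\bigl(\delta^4+\delta^{-1}\delta^6\bigr)\lesssim\delta^3,
\]
so the iteration closes for large $n$. This is the step I expect to be the main obstacle: the inverse has norm $\sim\delta^{-1}$ (near-resonance with the periodic spectrum), and the argument works only because the corrector $y_1$ pushes the residual to $O(\delta^4)$.

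\emph{Nearby parameters and conclusions.} For $\delta$ near $\delta_n$ with $|\delta-\delta_n|\ll\delta_n^2$, and for $c$ near $c_*$, the operator stays invertible with the same bound; this is the reason the parameter neighborhoods shrink with $n$. The implicit-function theorem then gives a smooth family $(c,\delta)\mapsto y$. Differentiating the equation gives $\mathcal L\,\partial_cy=O(\delta^2)$, because $c$ enters only through $Q$, and $\mathcal L\,\partial_\delta y=O(\delta)$. Applying the $\delta^{-1}$ bound yields \eqref{pfbd}.

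\emph{Asymptotics and regularity.} Since $\|h\|_2\lesssim\delta^3$ and $\|h'\|_2\lesssim\delta^2$, we get $a_n/\delta_n\to y_0$ in $C^1$. This keeps $(a,a')\in\mathcal D$ with $R=1-O(\delta^2)>0$, so the data are admissible, and it gives the limits for $F_n/\delta_n^k$ and $\ell_{2,n}/\delta_n$ in \eqref{pasy} from the formulas above. Finally, $y\in H^2$ solves a smooth ODE, so bootstrapping shows every resulting magnitude is smooth. Theorem \ref{grec} together with Lemma \ref{gsep} then produce the periodic reconstruction with $\ell_1=L$.
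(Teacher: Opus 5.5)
Your outline is the paper's proof. It uses the same rescaling, the root $y_0$ with linearization $q$, a corrector $y_0-\delta^2E_0/q$ with $O(\delta^4)$ residual, and the $O(\delta^{-1})$ inverse of Lemma \ref{pspc}. It closes with the same contraction on an $X_\delta$-ball of radius $\sim\delta^3$ and the same implicit differentiation using $\|\partial_\delta\mathcal H_\delta\|_2=O(\delta)$ and $\|\partial_c\mathcal H_\delta\|_2=O(\delta^2)$. One step is wrong as stated, however, and an algebra slip in your rescaled equation actually triggers it.

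You claim that ``any remaining first-order discrepancy is $O(\delta^2)$ and can be absorbed.'' This fails. A term $\delta^2b(t)h'$ satisfies $\|\delta^2bh'\|_2\le\delta\|b\|_\infty\|h\|_{X_\delta}$, so after the $C\delta^{-1}$ loss of Lemma \ref{pspc} it has size $C\|b\|_\infty$. That is not small, so no Neumann series is available. By contrast, zeroth-order $O(\delta^2)$ errors and first-order $O(\delta^4)$ errors are harmless: they cost $O(\delta)$ and $O(\delta^2)$ after inversion.

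The $\delta^2$-drift must therefore be matched exactly, and with the correct equation it is. No change of unknown is needed, because the drift comes from the $y'$-dependence of $R_\delta=1-\delta^2(y^2+y'^2)$. The correct scaled term is $Ay^{1+2/k}R_\delta^{1+1/k}Q_\delta^{-1/k}$ with $A=d_2L^{2k_1/k}$. You wrote $(R/Q)^{-1/k}$ in place of $(R/Q)^{1/k}$, both in $T$ and in the equation. Using $Ay_0^{2+2/k}=1$, the nondifferential part $N$ has
$N_p=-2\delta^2y_0'\bigl(-y_0^{-1}+(1+\tfrac1k)y_0^{-1}\bigr)+O(\delta^4)=-\tfrac2k\delta^2y_0'/y_0+O(\delta^4)$.
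This is exactly the drift $\delta^2(m'/m)h'$ of $\mathcal L_\delta$ for $m=y_0^{-2/k}$.

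With your exponent, the same computation gives $+\tfrac2k\delta^2y_0'/y_0$, which corresponds to the weight $y_0^{2/k}$. The resulting mismatch $\tfrac4k\delta^2(y_0'/y_0)h'$ is precisely the non-absorbable kind whenever $L$ is nonconstant. Once the exponent is corrected, $D\mathcal H_\delta(y^*_\delta)-\mathcal L_\delta=O(\delta^2)+O(\delta^4)\partial_t$ has $X_\delta\to L^2$ norm $O(\delta^2)$. The rest of your argument then goes through as written, including the shrinking parameter windows $|\delta-\delta_n|\ll\delta_n^2$ and the bounds \eqref{pfbd}.
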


\begin{proof}
Set $A=d_2L^{2k_1/k}$, $D=d_1/L^2$, $R_\delta=1-\delta^2(y^2+y'^2)$ and $Q_\delta=1+(c^2-1)\delta^2y^2$.
Under the stated scaling, \eqref{gvec} becomes $\mathcal H_\delta(y)=0$, where
\[
       \mathcal H_\delta(y)
          =\delta^2y''-\frac{R_\delta}{yQ_\delta}
         +Ay^{1+2/k}R_\delta^{1+1/k}Q_\delta^{-1/k}
        -\delta^2y(DR_\delta-1).
\]
At $\delta=0$ we find the unique positive root $y_0$, whose linearization is $q$.
At $(y_0,y_0')$, the nondifferential part $N(t,y,p,\delta,c)$, with $p=y'$, satisfies $N_y=q+O(\delta^2)$ and $N_p=-(2/k)\delta^2y_0'/y_0+O(\delta^4)$, which accounts for the drift in $\mathcal L_\delta$.
There is a smooth periodic $E_0$ such that $\mathcal H_\delta(y_0)=\delta^2E_0+O(\delta^4)$.
Set $y_\delta^*=y_0-\delta^2E_0/q$.
Then
$$
         \mathcal H_\delta(y_\delta^*)=O_{L^2}(\delta^4),
       \qquad
          D\mathcal H_\delta(y_\delta^*)
          =\mathcal L_\delta+O(\delta^2)+O(\delta^4)\partial_t.
$$
The error has $X_\delta\to L^2$ norm $O(\delta^2)$.
Lemma \ref{pspc}, followed by a Neumann-series argument, yields $\|B_n^{-1}\|_{L^2\to X_{\delta_n}}=O(\delta_n^{-1})$ for $B_n=D\mathcal H_{\delta_n}(y_{\delta_n}^*)$.

On an $X_\delta$-ball of radius $r$, scaled Sobolev embedding and smoothness in $(t,y,\delta y',\delta^2,c)$ provide
\[
        \|h\|_\infty+\|\delta h'\|_\infty\le C\delta^{-1/2}\|h\|_{X_\delta},\qquad
      \|N_\delta(h)-N_\delta(\widetilde h)\|_2\le C\delta^{-1/2}r\|h-\widetilde h\|_{X_\delta}.
\]
where $N_\delta$ is the Taylor remainder at $y_\delta^*$.
For $r=M\delta_n^3$ with fixed large $M$, the map $h\mapsto-B_n^{-1}(\mathcal H_{\delta_n}(y_{\delta_n}^*)+N_{\delta_n}(h))$ preserves the ball and contracts with Lipschitz constant $O(\delta_n^{3/2})$.
Its fixed point satisfies $\|h\|_\infty=O(\delta_n^{5/2})$, $\|h'\|_\infty=O(\delta_n^{3/2})$ and $\|h''\|_2=O(\delta_n)$.
Thus $y_n=y_{\delta_n}^*+h\to y_0$ in $C^1$ and $a_n=\delta_ny_n$ is admissible.
Bootstrapping proves smoothness.
Substituting into the identities $F_n=\delta_n^k\sqrt{Q_{\delta_n}}/(y_n\sqrt{R_{\delta_n}})$ and $\ell_{2,n}=(F_n/L^{k_1})^{1/k}$ then proves \eqref{pasy}.

The exact linearization differs from $B_n$ by $O(\delta_n^{5/2})$ in $X_{\delta_n}\to L^2$ and retains the $O(\delta_n^{-1})$ inverse bound.
The periodic implicit-function theorem now produces the required families.
At the base points $\|y''\|_2+\|y'\|_\infty=O(1)$.
In the variable $z=\delta y'$, $N$ is even in $z$ and depends on $c$ only through $Q_\delta$.
Hence $N_z=O(\delta)$ and $N_c=O(\delta^2)$.
After including the term $2\delta y''$, we obtain $\|\partial_\delta\mathcal H_\delta(y)\|_2=O(\delta)$ and $\|\partial_c\mathcal H_\delta(y)\|_2=O(\delta^2)$.
Implicit differentiation then proves \eqref{pfbd}.
\end{proof}

\subsection{The periodic Pinney limit}

\begin{lemma}\label{pnell}
For smooth $P$-periodic $u,w$ with $w>0$, there are arbitrarily large $\mu>0$ for which the equation $y''+(1+\mu w-u)y=0$ has strictly elliptic monodromy.
At every such $\mu$, the Pinney equation $x''+(1+\mu w-u)x=x^{-3}$ has a positive smooth periodic solution, and $\mathcal L=\partial_t^2+1+\mu w-u+3x^{-4}$ is an isomorphism $C^{2,\alpha}_{\rm per}\to C^{0,\alpha}_{\rm per}$ for $0<\alpha<1$.
\end{lemma}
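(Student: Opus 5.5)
The plan has three parts: obtain elliptic monodromy for a sequence of large $\mu$; use it to build the periodic Pinney solution by the Ermakov--Pinney formula; and then check that the linearization $\mathcal L$ has trivial periodic kernel.

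\textbf{Large $\mu$ with elliptic monodromy.} Set $p_\mu=1+\mu w-u$. For large $\mu$ we have $p_\mu>0$, so we can pass to the Liouville variable $s=\int_0^t\sqrt{p_\mu}$. The equation $y''+p_\mu y=0$ then becomes a small perturbation of $\ddot z+z=0$ on an interval of length $S_\mu=\int_0^P\sqrt{p_\mu}\,dt\sim\sqrt\mu\int_0^P\sqrt w$. The perturbation is of size $O(\mu^{-1})$, uniformly in the $C^2$ sense, since $p_\mu^{-1}p_\mu''$ and $(p_\mu')^2/p_\mu^2$ are bounded by $C/\mu$ times derivatives of $w$ and $u$. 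WKB (or the standard Prüfer-angle asymptotics) therefore gives
\[
\operatorname{tr}M_\mu=2\cos S_\mu+O(\mu^{-1/2}).
\]
Since $S_\mu\to\infty$ continuously and monotonically (because $w>0$), there are arbitrarily large $\mu$ with $S_\mu\equiv\pi/2\pmod{2\pi}$. At those values $|\operatorname{tr}M_\mu|<2$, which is strictly elliptic monodromy. The derivation of a uniform error in the trace is the main technical point, but it is classical.

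\textbf{Construction of the Pinney solution.} At such a $\mu$, the monodromy $M\in SL_2(\R)$ is elliptic, so it is conjugate to a rotation. Choose a fundamental pair $y_1,y_2$ with Wronskian $1$ in which $M$ acts by a rotation matrix. Then
\[
x=\sqrt{y_1^2+y_2^2}
\]
is positive, since a common zero is excluded by the Wronskian. It is periodic, because rotation preserves $y_1^2+y_2^2$. By the Ermakov--Pinney identity it solves $x''+p_\mu x=W^2x^{-3}=x^{-3}$. Smoothness is inherited from $y_1,y_2$.

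\textbf{Invertibility of $\mathcal L$.} Since $\mathcal L$ is Fredholm of index zero from $C^{2,\alpha}_{\rm per}$ to $C^{0,\alpha}_{\rm per}$, it suffices to show that its periodic kernel is trivial. I would use the known structure of the linearized Pinney equation: the functions $y_1^2-y_2^2$, $2y_1y_2$ and $y_1^2+y_2^2$ solve the third-order equation satisfied by products of solutions. If $h$ satisfies $h''+p_\mu h+3x^{-4}h=0$, then $xh$ is a linear combination of $y_1^2-y_2^2$ and $2y_1y_2$. Writing $y_1+iy_2=xe^{i\phi}$ with $\phi'=x^{-2}$, the solution space of $\mathcal L h=0$ is spanned by $x\cos2\phi$ and $x\sin2\phi$. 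Over one period $\phi$ increases by the rotation angle $\theta$ of $M$, with $\theta\in(0,\pi)\cup(\pi,2\pi)$ because the monodromy is strictly elliptic. Hence $2\theta\notin2\pi\Z$, so no nonzero combination of $x\cos2\phi$ and $x\sin2\phi$ is periodic, and the kernel is trivial.

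One caveat: $\theta=\pi$ corresponds to $\operatorname{tr}M=-2$, which is excluded by strict ellipticity. To make sure $2\theta$ avoids $2\pi\Z$, I rely on the strict inequality $|\operatorname{tr}M|<2$, and choosing $S_\mu\equiv\pi/2$ gives $\theta$ near $\pi/2$, so this holds. I expect the $\operatorname{tr}M$ asymptotics and the careful linearization identity to be the main steps to verify.
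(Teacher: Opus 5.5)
Your proposal is correct and follows essentially the paper's route. The paper also uses a Liouville transformation to get $\operatorname{tr}M_\mu=2\cos(\cdot)+O(\mu^{-1/2})$. Its Pinney solution $\sqrt{rCr^T}$ with $MCM^T=C$ is your $x=\sqrt{y_1^2+y_2^2}$ written in a Wronskian-one basis where the monodromy is a rotation. Finally, it uses the identity $\mathcal L(xf(\phi))=x^{-3}(f_{\phi\phi}+4f)$, which makes the linearized monodromy a rotation by $2\Theta\notin2\pi\Z$, together with Fredholm index zero.

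The remaining differences are cosmetic. You normalize by $\sqrt{p_\mu}$ instead of $\sqrt w$. In that normalization the $O(\mu^{-1})$ remainder is $\tfrac5{16}(p_\mu')^2/p_\mu^3-\tfrac14p_\mu''/p_\mu^2$; the ratios you wrote are $O(1)$, not $O(\mu^{-1})$. Also, the kernel argument needs only $|\operatorname{tr}M|<2$, not $S_\mu\equiv\pi/2$.
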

\begin{proof}
Under the Liouville transformation $s(t)=\int_0^t\sqrt w\,dt$, $S=s(P)$ and $y=w^{-1/4}z$, the equation becomes $z_{ss}+(\mu+V)z=0$, where $V$ is bounded and periodic and the monodromy is conjugate to the original one.
Variation of constants yields $\operatorname{tr}M_\mu=2\cos(\sqrt\mu S)+O(\mu^{-1/2})$.
We therefore choose $\sqrt{\mu_n}S=(n+\tfrac12)\pi$ and obtain strict ellipticity for large $n$.

Fix such a parameter, set $p=1+\mu w-u$, and take the fundamental matrix with $Y(0)=I$ and $M=Y(P)$.
Since $M$ is conjugate to a rotation, choose $C>0$ with $MCM^T=C$ and $\det C=1$.
For the first row $r$ of $Y$, the Pinney formula $x=\sqrt{rCr^T}$ \cite{Pi50} is positive and periodic.
Using $r''=-pr$ and the unit Wronskian, we verify that $x''+px=x^{-3}$.
Set $\theta(t)=\int_0^t x^{-2}\,dt$ and $\Theta=\theta(P)$.
The fundamental pair $x\cos\theta,x\sin\theta$ yields $\operatorname{tr}M=2\cos\Theta$, so $\Theta\notin\pi\mathbb Z$.
Since $\mathcal L(xf(\theta))=x^{-3}(f_{\theta\theta}+4f)$, the linearized monodromy is conjugate to rotation through $2\Theta$ and has no fixed vector.
Since the periodic operator has Fredholm index zero, we conclude that it is invertible.
\end{proof}

\begin{corollary}[Nondegenerate closed data]\label{pbase}
Every positive smooth $P$-periodic $L_0$ admits closed data $(\mu_*,\delta_*,x_*)$ from Corollary \ref{pdiv}, with arbitrarily small $\delta_*>0$, for which the magnitude-equation linearization $D_x\mathcal R$ of \eqref{pnex} and the reduced phase-increment derivative in $(\mu,\delta)$ are invertible.
\end{corollary}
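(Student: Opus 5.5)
We obtain the closed data by perturbing from the base point $(\mu_*,0,x_{\mu_*})$ of Theorem~\ref{pnonc}, applied with $L=L_0$. Lemma~\ref{pnell} gives the Pinney solution $x_{\mu_*}$ and shows that the limiting linearization $\mathcal L_{\mu_*}$ is an isomorphism $C^{2,\alpha}_{\rm per}\to C^{0,\alpha}_{\rm per}$. By construction, the residual $\mathcal R$ of \eqref{pnex} satisfies $D_x\mathcal R(\mu,0,x_\mu)=\mathcal L_\mu$. This residual is smooth in $(\mu,\delta,x)$ near the base point, including small negative $\delta$, as noted in the proof of Theorem~\ref{pnonc}.

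Invertibility of a bounded operator is an open condition, and the family $x_{\mu,\delta}$ depends continuously on the parameters in $C^{2,\alpha}$. Hence $D_x\mathcal R(\mu,\delta,x_{\mu,\delta})$ remains invertible on a neighborhood $U$ of $(\mu_*,0)$. In the same way, \eqref{pndet} shows that the determinant of $\partial(D_1,D_2)/\partial(\mu,\delta)$ is strictly negative at $(\mu_*,0)$. After shrinking $U$, we may therefore assume that both nondegeneracy conditions hold on all of $U$.

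Next we choose a rational return inside $U$ with $\delta>0$. By the rank-two property, $(\mu,\delta)\mapsto(D_1,D_2)$ is a local diffeomorphism on $U\cap\{\delta>0\}$. Its image is open, so it contains points of $2\pi\mathbb Q^2$. Following the proof of Corollary~\ref{pdiv}, we select a sequence of such rational returns $(\mu_j,\delta_j)\to(\mu_*,0)$ with $\delta_j>0$. Each one gives closed data with $\delta_*=\delta_j$ arbitrarily small and $\rho=(D_1,D_2)/(2\pi)\in\mathbb Q^2$. Closure on a finite cover then follows from Proposition~\ref{ccrit}, and $x_*=x_{\mu_j,\delta_j}$.

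The main point to check is what the \emph{reduced} phase-increment derivative means. It should be the derivative of $(\mu,\delta)\mapsto(D_1,D_2)(\mu,\delta,X(\mu,\delta))$, where $X$ is the solution of $\mathcal R=0$ produced by the implicit function theorem. This composite map is exactly the one whose Jacobian was computed in \eqref{pndet}, so the openness argument above applies to it.

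Two further points need care. First, positivity of $x$ and $r_\delta$ must persist for the closed members, which is guaranteed near the base point. Second, $D_x\mathcal R$ should be taken at fixed $L_0$ in the Hölder spaces of the subsequent deformation theorem, namely as a map $C^{r+2,\alpha}\to C^{r,\alpha}$. For the latter we rely on elliptic regularity of this periodic ODE operator: its kernel consists of smooth functions, and since its Fredholm index is zero, triviality of the kernel gives invertibility at every regularity level.
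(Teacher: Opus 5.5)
Your proposal is correct and follows essentially the same route as the paper. Openness carries the invertibility of $\mathcal L_{\mu_*}$ from Lemma~\ref{pnell} and the nonvanishing of \eqref{pndet} over to small $\delta>0$; rational returns are then chosen in the open phase image and closed via Proposition~\ref{ccrit}; and elliptic regularity gives the inverses on the other periodic H\"older scales, with your identification of the reduced phase-increment map as $(\mu,\delta)\mapsto(D_1,D_2)(\mu,\delta,X(\mu,\delta))$ matching how the deformation theorem uses it.
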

\begin{proof}
The invertibility of the magnitude linearization from Lemma \ref{pnell} persists for small positive $\delta$, and \eqref{pndet} gives invertibility of the phase derivative.
Rational returns are dense in the open phase images.
Applying Proposition \ref{ccrit}, we obtain closed data, while elliptic regularity supplies inverses on all periodic H\"older scales.
\end{proof}

\section{Wall matching and a periodic obstruction}\label{apvol}

In this appendix we prove two facts used in Section \ref{sevol}: analytic continuation through a wall contact and a computable obstruction to global periodic reconstruction.

\subsection{Analytic wall germs and terminal matching}

\emph{Turning condition.}
Let $c\ne0$, assume strict inequality in \eqref{vflor}, and suppose all critical points of $F$ are nondegenerate.
If a smooth solution of \eqref{vquad} turns at $t_0$, then $F'(t_0)=0$ and $u_0=u(t_0)$ is one of the two walls.
For $B_0=1-2u_0-\varepsilon(c^2-1)/F(t_0)^2$ and $q=u''(t_0)$, one has
\begin{equation}\label{vmjeq}
      q^2-2B_0q-4u_0(1-u_0)\frac{F''(t_0)}{F(t_0)}=0,
         \qquad T(t_0)=-\frac{2F''(t_0)}{F(t_0)q}.
\end{equation}
Thus $q\ne0$.
If $F$ is analytic near $t_0$, each real candidate satisfying $2B_0/q-2\notin\{1,2,\ldots\}$ determines a unique analytic germ with the prescribed $2$-jet.

\begin{proof}
Differentiating \eqref{vquad} at a turn, we obtain $\mathcal A=\mathcal A_t=0$ and hence $F'=0$.
A second differentiation yields \eqref{vmjeq}, and \eqref{vquot} determines $T(t_0)$.
Let $x=t-t_0$ and $u=u_0+x^2v$.
Since $\mathcal A=\mathcal A_t=0$ at $(t_0,u_0)$, $\widetilde{\mathcal A}(x,v)=x^{-2}\mathcal A(t_0+x,u_0+x^2v)$ is analytic, with $\widetilde{\mathcal A}(0,q/2)=q^2/4>0$ by \eqref{vmjeq}.
The equation with the prescribed $2$-jet is equivalent to
$$
          xv'=2\operatorname{sgn}(q)\sqrt{\widetilde{\mathcal A}(x,v)}-2v=:\Phi(x,v),
          \qquad v(0)=q/2.
$$
Here $\Phi(0,q/2)=0$ and $\Phi_v(0,q/2)=\rho(q)$.
Under the stated condition $\rho(q)\notin\{1,2,\ldots\}$, we apply the Briot--Bouquet theorem to obtain a unique analytic germ \cite[Chapter 12, \S12.1]{Hil76}.
The coefficient recursion, or equivalently the analytic implicit-function argument, also provides analytic dependence on the auxiliary analytic parameters.
Moreover, $\mathcal A=x^2\widetilde{\mathcal A}>0$ off $x=0$.
\end{proof}

\begin{lemma}[Terminal wall matching]\label{vwterm}
Let $A$ be analytic near $H$, with $A(H)>0$ and $A'(H)=0<A''(H)$.
Any strictly decreasing solution of $(z')^2=4(A-z^2)$ arriving from $t<H$ at $z_*=z(H)$, $z_*^2=A(H)$, extends analytically across $H$, with
\[
        z''(H)=-2z_*+\sqrt{4z_*^2+2A''(H)}>0.
\]
If $A$ is even about $H$, so is the extension.
\end{lemma}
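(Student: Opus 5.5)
We follow the blow-up used for the turning condition above. Put $x=t-H$, $a_2=A''(H)>0$, and write $z=z_*+x^2v$. Since $A(H)=z_*^2$ and $A'(H)=0$, the difference $A(H+x)-(z_*+x^2v)^2$ vanishes to second order in $x$. Hence
\[
\widetilde A(x,v)=x^{-2}\bigl(A(H+x)-(z_*+x^2v)^2\bigr)
\]
is analytic near $x=0$, with $\widetilde A(0,v)=a_2/2-2z_*v$.

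\emph{Step 1: the blown-up equation.} For $x<0$ the solution is strictly decreasing, so $z'=-2\sqrt{A-z^2}=-2|x|\sqrt{\widetilde A}=2x\sqrt{\widetilde A}$. Substituting $z'=2xv+x^2v'$ and dividing by $x$ gives
\[
xv'=2\sqrt{\widetilde A(x,v)}-2v=:\Phi(x,v).
\]
This is a Briot--Bouquet equation.

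\emph{Step 2: the fixed point and the value of $z''(H)$.} Any continuous limit $v_0=\lim_{x\to0^-}v$ must satisfy $\Phi(0,v_0)=0$. This means $v_0=\sqrt{a_2/2-2z_*v_0}\ge0$, so $v_0^2+2z_*v_0-a_2/2=0$, whose nonnegative root is
\[
v_0=-z_*+\sqrt{z_*^2+a_2/2}>0.
\]
Then $z''(H)=2v_0$, which is exactly the stated formula, and its positivity follows from $a_2>0$. The eigenvalue at the fixed point is
\[
\rho=\Phi_v(0,v_0)=-\frac{2z_*}{v_0}-2 .
\]

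\emph{Step 3: existence of the limit and identification with the analytic germ.} This is the main step. First we show that $v=(z-z_*)/x^2$ is bounded and converges as $x\to0^-$. Since $z\to z_*$ and $z'\to0$, we can compare with barriers $v_0\pm\eta$. For small $|x|$ the sign of $\Phi$ at these barriers makes the strip around $v_0$ attracting or repelling as $x\uparrow0$, which gives the limit.

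When $\rho<0$, which includes the case $z_*>0$ (the upper wall), the Briot--Bouquet theorem \cite[Chapter 12]{Hil76} gives a unique analytic solution $v^{\rm an}$ with $v^{\rm an}(0)=v_0$. Linearizing, $w=v-v^{\rm an}$ satisfies $xw'=(\rho+O(x))w$, so $w\sim C|x|^{\rho}$, which is unbounded unless $C=0$. Hence the given solution coincides with $v^{\rm an}$ on $x<0$. The function $z=z_*+x^2v^{\rm an}$ is then the required analytic extension across $H$.

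For $x>0$ the same analytic function solves $(z')^2=4(A-z^2)$, because squaring removes the sign choice in $z'=2x\sqrt{\widetilde A}$. Since $z'$ changes sign at $H$, the extension is a genuine turn at the wall.

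The case $z_*<0$ (the lower wall) is where I expect the real difficulty. There $\rho=2|z_*|/v_0-2$ can be positive, so non-analytic branches $C|x|^\rho$ also approach $v_0$, and the uniqueness argument above breaks down. If the hypotheses of $A$ do not exclude this case, I would first try to show that only $z_*>0$ arises in the application, namely at the upper wall $u_+$. Failing that, I would impose the nonresonance condition $\rho\notin\{1,2,\dots\}$, as in the turning condition. Even under that condition an additional argument is still needed to select the analytic branch, since nonresonance alone only produces the analytic germ and does not rule out the branches $C|x|^\rho$.

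\emph{Step 4: evenness.} If $A(H+x)=A(H-x)$, then $\widetilde A$ is even in $x$. Consequently $x\mapsto v^{\rm an}(-x)$ solves the same Briot--Bouquet problem with the same initial value $v_0$. By uniqueness of the analytic germ, $v^{\rm an}$ is even, and therefore so is $z=z_*+x^2v^{\rm an}$.
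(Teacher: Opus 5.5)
\textbf{Gap 1: the lower wall.} Your setup matches the paper's. The blow-up $z=z_*+x^2v$, the Briot--Bouquet equation $xv'=2\sqrt{\widetilde A}-2v$, the root $v_0$ giving $z''(H)=2v_0$, and evenness via uniqueness of the analytic germ are all the same. However, the proposal does not prove the lemma as stated, because you leave the case $z_*<0$ open, and the reason you give is a sign slip. Since $z_*+v_0=\sqrt{z_*^2+a_2/2}>0$,
\[
\rho=-\frac{2z_*}{v_0}-2=-\frac{2\sqrt{z_*^2+a_2/2}}{v_0}<0
\]
for both walls. For $z_*<0$ one has $v_0>2|z_*|$, so in fact $\rho\in(-2,-1)$. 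Hence no branches $C|x|^{\rho}$ approach $v_0$, and no nonresonance hypothesis is needed. There is also no reason to restrict to the upper wall: the lemma makes no sign assumption on $z_*$, and the periodic-volume application does not specify which wall is met at $H$.

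\textbf{Gap 2: the a priori bound on $v$.} What is genuinely missing is a bound on $v=(z-z_*)/x^2$. Because $\rho<0$, the strip around $v_0$ is repelling as $x\uparrow0$. Your barrier comparison therefore cannot by itself show that $v$ is bounded or convergent. It only shows that a \emph{bounded} $v$ must tend to $v_0$, since outside the strip $|xv'|$ is bounded below, which would force logarithmic divergence. For $z_*>0$ the bound comes free from $\widetilde A\ge0$, i.e.\ $2z_*v\le x^{-2}(A(H+x)-z_*^2)$. For $z_*<0$, however, that constraint only gives $v=O(x^{-2})$.

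\textbf{How the paper closes it.} The paper proves the bound directly. With $x=H-t>0$ and $w=z(H-x)-z_*\ge0$ increasing, one has $w'=2\sqrt{x^2B-2z_*w-w^2}\le C(x+\sqrt w)$. Integrating gives $w\le Cx^2+Cx\sqrt w$, hence $w=O(x^2)$. The paper then bypasses your convergence-plus-linearization step. The difference $e=w-\bar w$ from the analytic candidate satisfies $e'=\kappa e$ with $\limsup_{x\downarrow0}x\kappa\le 4\max\{-z_*,0\}/v_0<2$, using only $w,\bar w\ge0$. A nonzero $e$ would then satisfy $|e|\ge cx^K$ with $K<2$, contradicting $e=O(x^2)$.

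If you correct the sign of $\rho$ and insert this $O(x^2)$ bound, your argument goes through for both walls.
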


\begin{proof}
Let $x=H-t$, $w(x)=z(H-x)-z_*$, and $A(H-x)-A(H)=x^2B(x)$, where $\beta=B(0)=A''(H)/2>0$.
Then $w\ge0$ is increasing and
$$
     w'=2\sqrt{x^2B(x)-2z_*w-w^2}\le C(x+\sqrt w).
$$
After integration, we have $w(x)\le Cx^2+Cx\sqrt{w(x)}$, and hence $w=O(x^2)$.
For $\eta=-z_*+\sqrt{z_*^2+\beta}>0$, the Briot--Bouquet equation
$$
          xv'=2\sqrt{B(x)-2z_*v-x^2v^2}-2v,
      \qquad v(0)=\eta,
$$
has linear coefficient $-2(z_*+\eta)/\eta<0$, hence a unique analytic germ $\bar v$ and an analytic candidate $\bar w=x^2\bar v=\eta x^2+O(x^3)$.

Set $R(x,w)=x^2B(x)-2z_*w-w^2$.
The difference $e=w-\bar w$ satisfies
$$
          e'=\kappa e,
     \qquad
     \kappa=-\frac{2(2z_*+w+\bar w)}
     {\sqrt{R(x,w)}+\sqrt{R(x,\bar w)}}.
$$
Since $\sqrt{R(x,\bar w)}=\eta x+O(x^2)$ and $w,\bar w\ge0$,
$$
         \limsup_{x\downarrow0}x\kappa
          \le\frac{4\max\{-z_*,0\}}\eta<2.
$$
Choose $K<2$ with $\kappa\le K/x$ near zero.
A nonzero $e$ satisfies $|e(x)|\ge Cx^K$, contrary to $e=O(x^2)$.
Thus $w=\bar w$, from which the formula $z''(H)=2\eta$ follows.
When $A$ is even, reflection preserves the $2$-jet; uniqueness of the analytic germ then proves evenness.
\end{proof}

\begin{proof}[Proof of Theorem \ref{vwallcap}]
With $z=\sin(2s-\pi/2)$ and $A_\alpha=1-\alpha^2/F^2$, \eqref{vweq} becomes
\[
        (z')^2=4(A_\alpha-z^2).
\]
For $0<\alpha<m$ the walls $\pm\sqrt{A_\alpha}$ move inward on $(0,H)$.
Start at the upper wall $z_{0,\alpha}=\sqrt{A_\alpha(0)}$ with the fast inward jet
\[
         q_\alpha=-2z_{0,\alpha}-2\sqrt{D_\alpha},
          \qquad
       D_\alpha=1-\frac{\alpha^2}{M^2}+\frac{\alpha^2F''(0)}{M^3}.
\]
By \eqref{vwcond}, $D_\alpha>0$ for $0\le\alpha\le m$.
After the substitution $z=z_{0,\alpha}+t^2v$, the Briot--Bouquet linear coefficient is $-4z_{0,\alpha}/q_\alpha-2\in[-1,0)$.
The argument above therefore provides a unique analytic germ depending analytically on $\alpha$.
Continue it by $z'=-2\sqrt{A_\alpha-z^2}$ until first contact.
At $\alpha=0$ the solution is $\cos(2t)$.
Since $H<\pi/2$, continuous dependence ensures strict survival on $(0,H]$ for small $\alpha>0$.

Strict survival through $H$ is open: a branch with positive gap at $H$ has positive gap on the intervening compact interval, by analytic dependence at $0$ and ordinary continuous dependence elsewhere.
Let $(0,\alpha_*)$ be its component issuing from zero.
As $\alpha_j\uparrow\alpha_*$, the bounds $|z_{\alpha_j}|\le1$ and $|z_{\alpha_j}'|\le2$ provide a uniformly convergent subsequence on $[0,H]$, whose limit satisfies
$$
         z(t)=z(0)-2\int_0^t\sqrt{A_{\alpha_*}(\xi)-z(\xi)^2}\,d\xi.
$$
Near zero this limit agrees with the selected germ by analytic dependence.
The gap $\mathscr D=A_{\alpha_*}-z^2\ge0$ cannot vanish inside $(0,H)$: at such a minimum, $\mathscr D'=A_{\alpha_*}'<0$, which is impossible.
If $\alpha_*=m$, the endpoint gives $z(H)=0$, and hence
$$
         h(0)=\int_0^H-z'(t)\,dt
         \le2\int_0^Hh(t)\,dt,
$$
contrary to \eqref{vwcond}.
Thus $\alpha_*<m$.
If the limiting branch had positive gap at $H$, survival openness would extend the component beyond $\alpha_*$.
Thus the first contact occurs exactly at $H$.

At $H$, we have $A_{\alpha_*}>0$ and $A_{\alpha_*}'=0<A_{\alpha_*}''$, so Lemma \ref{vwterm} provides analytic terminal matching.
The evenness of $F$, together with uniqueness of the selected initial jet, proves evenness at $0$; the terminal matching lemma proves evenness at $H$.
We may therefore reflect the solution to obtain a $2H$-periodic analytic solution.
Since $\alpha_*<m$, $|z|<1$ and $s=\pi/4+\frac12\arcsin z$ is the required solution.
Finally, \eqref{vtrec} and the analytic implicit-function theorem in Lemma \ref{gsep} produce positive analytic periodic warps.
\end{proof}

\subsection{A quantitative obstruction}

The moving walls may be pointwise nonempty even when no profile crosses a full period.
Indeed, a magnitude can reverse direction only at a critical time of $F$.
The next criterion turns this restriction into a direct integral obstruction.

\begin{proposition}[Periodic obstruction]\label{vobcritprop}
Fix $P>0$, $c\ne0$, $\varepsilon>0$, and a smooth $F>\sqrt\varepsilon(1+|c|)$ on $[0,P]$.
Choose $0<\alpha\le\beta<1$ so that both endpoint feasible intervals $[\sqrt{u_-(j)},\sqrt{u_+(j)}]$, $j\in\{0,P\}$, lie in $[\alpha,\beta]$.
Suppose $A:=\alpha-P/2>0$, $B:=\beta+P/2<1$, and set
$$
      g_F(t,r)=1-r^2-\frac{\varepsilon}{F(t)^2}(r^{-2}+c^2-1),
        \qquad
         \mu(t)=\min\{g_F(t,A),g_F(t,B)\}.
$$
If
\begin{equation}\label{vobcrit}
         \mu(t)>0\quad\text{whenever }0<t<P\text{ and }F'(t)=0,
       \qquad
     \int_0^P\sqrt{\max\{\mu(t),0\}}\,dt>\beta-\alpha,
\end{equation}
then \eqref{vscal} has no smooth solution on $[0,P]$.
\end{proposition}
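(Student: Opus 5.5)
The plan is to rewrite \eqref{vscal} as a first-order equation for the magnitude $r=a=\cos s$. Since $a'=-\sin s\,s'$ and $\sin^2 s=1-a^2$, \eqref{vscal} becomes
\[
      (a')^2=(1-a^2)-\frac{\varepsilon(1-a^2)}{F^2}\Bigl(\frac1{a^2}+\frac{c^2}{1-a^2}\Bigr)
      =g_F(t,a).
\]
This is also \eqref{vquad} written in the variable $\sqrt u$. Suppose, for contradiction, that a smooth solution exists on $[0,P]$. Its endpoint values then lie in the feasible intervals $[\sqrt{u_-(j)},\sqrt{u_+(j)}]$ for $j\in\{0,P\}$, hence in $[\alpha,\beta]$. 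Since $g_F\le 1-a^2<1$, we have $|a'|<1$. Using this Lipschitz bound from both endpoints gives $a(t)\in[\alpha-\min\{t,P-t\},\,\beta+\min\{t,P-t\}]\subset[A,B]$ for all $t\in[0,P]$.

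Next, for fixed $t$, the map $r\mapsto g_F(t,r)$ is strictly concave on $(0,1)$, since $\partial_r^2g_F=-2-6\varepsilon/(F^2r^4)<0$. Its minimum over $[A,B]$ is therefore attained at an endpoint, and we get $(a')^2=g_F(t,a(t))\ge\mu(t)$, hence $|a'(t)|\ge\sqrt{\max\{\mu(t),0\}}$ on $[0,P]$.

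The key step, which I expect to be the only real point, is to show that $a'$ never vanishes on $(0,P)$. Differentiating $(a')^2=g_F(t,a)$ gives $2a'a''=\partial_tg_F+\partial_rg_F\,a'$. At a zero $t_0$ of $a'$ this forces $\partial_tg_F(t_0,a(t_0))=0$. Now
\[
      \partial_tg_F=\frac{2\varepsilon F'}{F^3}\,(r^{-2}+c^2-1),
\]
and $r^{-2}+c^2-1>0$ because $0<r<1$. Hence $F'(t_0)=0$, which is the turning condition of Appendix \ref{apvol}. At such a time the first hypothesis in \eqref{vobcrit} gives $(a')^2(t_0)=g_F(t_0,a(t_0))\ge\mu(t_0)>0$, which contradicts $a'(t_0)=0$. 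So $a'$ has no interior zeros and $a$ is strictly monotone on $[0,P]$.

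Finally, monotonicity lets us compare the total displacement with the pointwise lower bound:
\[
      \beta-\alpha\ge|a(P)-a(0)|=\int_0^P|a'|\,dt\ge\int_0^P\sqrt{\max\{\mu,0\}}\,dt>\beta-\alpha .
\]
This contradicts the second condition in \eqref{vobcrit}, so no smooth solution of \eqref{vscal} exists on $[0,P]$.
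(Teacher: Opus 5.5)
Your proposal is correct and follows the paper's proof step for step. You rewrite \eqref{vscal} as $(a')^2=g_F(t,a)$, confine $a$ to $[A,B]$ using $|a'|<1$ from both endpoints, use concavity in $r$ to get $|a'|\ge\sqrt{\max\{\mu,0\}}$, show that an interior zero of $a'$ forces $F'=0$ and is then excluded by the first hypothesis, and conclude by comparing the monotone displacement $\int_0^P|a'|\,dt$ with $\beta-\alpha$.
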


\begin{proof}
Suppose that such a solution exists, and set $a=\cos s$.
Then $(a')^2=g_F(t,a)$ and $|a'|<1$.
The endpoint bounds imply $a(t)\in[A,B]$, while concavity in $r$ yields $g_F(t,a(t))\ge\mu(t)$.
At an interior zero of $a'$, we have $g_F(t,a)=0$, and differentiation yields
$
     0=\partial_tg_F(t,a)
       =\frac{2\varepsilon[1+(c^2-1)a^2]}{F^3a^2}F'.
$
Thus $F'=0$.
The first condition in \eqref{vobcrit} now gives $\mu(t)>0$, contradicting $0=g_F(t,a)\ge\mu(t)$.
Thus $a'$ has one sign, and
$$
          \beta-\alpha\ge |a(P)-a(0)|
         =\int_0^P|a'|\,dt
         \ge\int_0^P\sqrt{\max\{\mu(t),0\}}\,dt,
$$
contrary to our second condition.
\end{proof}

For example, when $c=\varepsilon=1$, $P=1/4$, and $F(t)=2.0001+1.9999\sin^2(4\pi t)$,
we can take $[\alpha,\beta]=[0.70,0.72]$, hence $A=0.575$, $B=0.845$.
Then $\mu(P/2)>0.19$ and $\mu>0.12$ on $[P/4,3P/4]$, so \eqref{vobcrit} holds.
Thus no smooth reconstruction exists for this parameter pair.

\end{document}